\documentclass[11pt]{article}
\usepackage{amsmath,amssymb,amsthm,amsfonts,mathtools}
\usepackage{xcolor}
\usepackage{geometry}
\usepackage{microtype}
\usepackage{enumitem}
\usepackage[hidelinks]{hyperref}
\usepackage{cite}

\newtheorem{theorem}{Theorem}[section]
\newtheorem{lemma}[theorem]{Lemma}

\newtheorem{observation}{Observation}
\newtheorem{conjecture}{Conjecture}[section]
\theoremstyle{definition}
\newtheorem{definition}{Definition}[section]

\newcommand{\rev}[1]{#1}
\newcommand{\chg}[1]{#1}
\newcommand{\newchg}[1]{#1}
\newcommand{\ex}{\mathrm{ex}}
\newcommand{\cK}{\mathcal K}
\newcommand{\cF}{\mathcal F}

\newcommand{\eps}{\varepsilon}

\newcommand{\Bad}{\operatorname{Bad}}
\newcommand{\Miss}{\operatorname{Miss}}
\newcommand{\floor}[1]{\left\lfloor #1\right\rfloor}
\newcommand{\ceil}[1]{\left\lceil #1\right\rceil}
\usepackage{authblk}

\title{Tur\'an problems with bounded matching number in $k$-uniform hypergraphs}
\author{Jialin Liu, Mingyang Guo\footnote{Corresponding email: mingyangg@zzu.edu.cn}, Xiumei Wang}
\affil{School of Mathematics and Statistics, Zhengzhou University, Zhengzhou 450001, China}

\date{}

\begin{document}
	\maketitle
	\vspace{-2.5em}
	
	\begin{abstract}
		
		For a family $\mathcal{F}$ of $k$-graphs,
		$\ex_k(n,\mathcal{F})$ denotes the maximum number of edges in an $n$-vertex
		$\mathcal{F}$-free $k$-graph. Let $M_{s+1}^k$ denote a matching of size $s+1$ in $k$-uniform hypergraphs. Recently, Alon and Frankl (JCTB, 2024) determined $\ex_2(n,\{M_{s+1}^2,K_{\ell+1}\})$
		for all $n\geq 2s+1$ and $\ell\geq 2$. For every non-bipartite graph $F$, Gerbner (JGT, 2024) determined $\ex_2(n,\{M_{s+1}^2,F\})$ for sufficiently large $n$. In this paper, we investigate this problem for different ranges of the matching parameter. First we prove that for every graph $F$ with $\chi(F)>3$, there exist constants $\beta>0$ and $s_0$ such that $\ex_k(n,\{M^2_{s+1}, F\})=\ex_2(2s+1,F)$ for $\max\{s_0,n/2-\beta n\}<s<n/2$.
		For integers $\ell\ge k\ge3$, let $\mathcal{K}_{\ell+1}^k$ be the family of
		all $k$-graphs $F$ with at most $\binom{\ell+1}{2}$ edges for which there is
		an $(\ell+1)$-set $L$ such that every pair of vertices of $L$ is covered by
		an edge of $F$, and let $H_{\ell+1}^k$ be the $k$-uniform hypergraph obtained from the complete graph $K_{\ell+1}$ by enlarging each edge with a set of $k-2$ new vertices, which is a member of $\mathcal{K}_{\ell+1}^k$.
		We determine $\ex_k\bigl(n,\mathcal{K}_{\ell+1}^k\cup\{M_{s+1}^k\}\bigr)$
		for $s\leq \frac{n}{8(k-1)^{k-2}(\ell-1)^3}$. For sufficiently large $s$, we also determine 	$\ex_k\bigl(n,\{M_{s+1}^k,H_{\ell+1}^k\}\bigr)$ 
		for  $\frac{n}{k}-\beta n<s<\frac{n}{k}$ and $s\leq \frac{n}{8(k-1)^{k-2}(\ell-1)^3}$, respectively.
		
	\end{abstract}
	
	\section{Introduction}
	
	A hypergraph $H$ is a pair $(V,E)$, where $V:=V(H)$ is the vertex set, and
	$E:=E(H)$ is the edge set that is a set of nonempty subsets of $V$. A hypergraph $H$  is \emph{$k$-uniform}, or a
	\emph{$k$-graph}, if $E\subseteq\binom{V}{k}$, where $\binom{V}{k}$ is the set of all $k$-element subsets of  $V$.
	Note that a loopless graph is a $2$-graph.
	For $T\subseteq V$ with $1\leq |T|\leq k$, let
	$N_H(T)=\{f\in\tbinom{V}{k-|T|}:f\cup T\in E(H)\}$ and $d_H(T)=|N_H(T)|$.
	For $T=\{v\}$, write $d_H(v)=d_H(\{v\})$  and $N_H(v)=N_H(\{v\})$. For a family $\cF$ of $k$-graphs,
	$\ex_k(n,\cF)$ denotes the maximum number of edges in an $n$-vertex
	$\cF$-free $k$-graph, which contains no member of $\cF$ as a subgraph.
	When $k=2$,   $\ex(n,\cF)$ coincides with $\ex_2(n,\cF)$.
	If $\cF=\{F\}$, we simply write $\ex_k(n,F)$ and $\ex(n,F)$.
	
	Determining Tur\'an numbers of graphs and hypergraphs is one of the central
	problems in extremal graph theory.
	Let $\ell\ge k\ge2$, the partition $\{V_1,\ldots,V_\ell\}$ of $[n]$ is called \emph{balanced} if each part has size either $\lfloor n/\ell\rfloor$ or $\lceil n/\ell\rceil$. Let $\{V_1,\ldots,V_\ell\}$ be a balanced partition of $[n]$. The generalized Tur\'an graph $T_k(n,\ell)$ is the $k$-uniform hypergraph on $[n]$ whose edges are precisely the $k$-subsets containing at most one vertex from each $V_i$. We write $t_k(n,\ell)=|E(T_k(n,\ell))|$.
	In particular, $T(n,\ell):=T_2(n,\ell)$ is the complete $\ell$-partite graph on $n$ vertices with the maximum possible number of edges, and we write $t(n,\ell)=|E(T(n,\ell))|$.
	Let $K_{\ell+1}$ be the complete graph on $\ell+1$ vertices.
	Tur\'an's Theorem~\cite{Turan1941} states
	that $\ex(n,K_{\ell+1})=t(n,\ell)$.
	A matching in a hypergraph is a set of pairwise disjoint edges. Let $M_{s+1}^k$ denote a matching of size $s+1$ in a $k$-graph, and $M_{s+1}$  coincides with  $M_{s+1}^2$.
	Erd\H{o}s and Gallai~\cite{ErdosGallai1959} showed that
	\[
	\ex(n,M_{s+1})=\max\left\{s(n-s)+\binom{s}{2},\binom{2s+1}{2}\right\}.
	\]

	Let $G(n,\ell,s)$  be the complete
	$\ell$-partite graph on $n$ vertices in which one part has size $n-s$ and the
	remaining $\ell-1$ parts induce $T(s,\ell-1)$. Let
	$g(n,\ell,s)=|E(G(n,\ell,s))|$. In 2024, Fu et al. \cite{FuWangYang2024} proved that $\ex(n,\{M_{s+1},K_{\ell+1}\})
	=g(n,\ell,s)$ for  $n\geq 3s+1$ and $\ell\geq 2$; Alon and Frankl~\cite{AlonFrankl2024} determined $\ex(n,\{M_{s+1},K_{\ell+1}\})$ for all $s$ and $\ell$.
	
	\begin{theorem}[Alon and Frankl~\cite{AlonFrankl2024}]\label{thm:af}
		\rev{For $n\ge2s+1$ and $\ell\ge2$,
			\[
			\ex(n,\{M_{s+1},K_{\ell+1}\})
			=\max\{t(2s+1,\ell),g(n,\ell,s)\}.
			\]}
	\end{theorem}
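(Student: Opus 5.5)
The lower bound comes from two constructions. The Tur\'an graph $T(2s+1,\ell)$ plus $n-2s-1$ isolated vertices has only $2s+1$ non-isolated vertices, so it contains no $M_{s+1}$; it is also $K_{\ell+1}$-free. In $G(n,\ell,s)$ every edge meets the $s$ vertices outside the big part $A$ of size $n-s$, since $A$ is independent. Hence this set of $s$ vertices is a vertex cover, the graph has no $M_{s+1}$, and being $\ell$-partite it is $K_{\ell+1}$-free. The work is in the upper bound.

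For the upper bound, take a $\{M_{s+1},K_{\ell+1}\}$-free graph $G$ on $n$ vertices and apply the Gallai--Edmonds structure theorem, or equivalently the Berge--Tutte formula. Since $\nu(G)\le s$, there is a set $S$ with $|S|=t$ such that $G-S$ has components $C_1,\dots,C_m$ (odd or even) with
\[
s \ge t+\sum_i \lfloor |C_i|/2\rfloor .
\]
Edges of $G$ then fall into three kinds:
\begin{enumerate}[label=(\roman*)]
\item edges inside $S$;
\item edges between $S$ and $V\setminus S$;
\item edges inside the components $C_i$.
\end{enumerate}
We may saturate: make every $C_i$ complete subject to $K_{\ell+1}$-freeness, and join $S$ to everything. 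This does not increase $\nu$ beyond what the Tutte bound allows. The $K_{\ell+1}$-constraint, however, couples the pieces, since $S\cup C_i$ must be $K_{\ell+1}$-free. So instead of literally saturating, we bound each piece by Tur\'an's theorem applied to $G[S\cup C_i]$ together with the bipartite count to the singletons. The singleton components, which number at least $n-t-\sum_i |C_i|$, only see $S$. Their contribution is at most $(n-t-\sum|C_i|)\cdot(\text{max degree into } S)$, and the neighbourhood of such a vertex in $S$ must be $K_\ell$-free.

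This leads to an optimization. A natural bound is
\[
e(G)\le \max_{t}\Bigl[\,e\bigl(G[S\cup \textstyle\bigcup C_i]\bigr)+(\#\text{singletons})\cdot t\,\Bigr],
\]
with $G[S\cup\bigcup C_i]$ being $K_{\ell+1}$-free on at most $t+\sum|C_i|$ vertices. A cleaner route is to replace this by a single graph. Merge all nontrivial components into one odd block $B$ of size $2(s-t)+1$, which only increases the edge count under the constraints, since Tur\'an-type counts are superadditive under joining. Then bound
\[
e(G)\le \max_{0\le t\le s} f(t), \qquad f(t)= t(n-t-|B|)+\ex\bigl(\ell;\, S\cup B \text{ with } S \text{ joined appropriately}\bigr).
\]
Here one needs that the graph on $S\cup B$, plus the singleton attachments, is $K_{\ell+1}$-free. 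The extremal configuration for fixed $t$ should be a complete $\ell$-partite graph on $S\cup B\cup(\text{singletons})$ in which the singletons lie in one part. Its edge count is a quadratic in the part sizes, and optimizing shows it is convex in $t$, so the maximum of $f$ is attained at $t=0$, giving $t(2s+1,\ell)$, or at $t=s$, giving $g(n,\ell,s)$.

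The main obstacle is justifying that, for fixed $t$, the optimum is this $\ell$-partite structure when $S$, the blocks, and the singletons interact under $K_{\ell+1}$-freeness. I would handle it by Zykov symmetrization. Singleton vertices have neighbourhoods inside $S$ and are pairwise non-adjacent, so all of them can be cloned to a single best vertex without creating $K_{\ell+1}$ and without increasing $\nu$, because clones of a vertex outside $S$ remain singleton components. Within $S\cup B$, apply Zykov symmetrization as well, taking care that symmetrizing does not increase the matching number. I expect this care to be the delicate part: the check is that the Tutte set $S$ remains a valid certificate. After symmetrization the graph is complete multipartite, and the remaining step is a finite convexity calculation in $t$ and the part sizes.
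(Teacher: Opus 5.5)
The paper does not prove this theorem; it quotes it from Alon and Frankl. So I am judging your argument on its own merits.

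Your two lower-bound constructions and the Tutte--Berge set-up are fine. The upper bound has a genuine gap, and it sits exactly at the step that carries the weight: the bound for a fixed value of $t=|S|$ (I keep your $t$ and write $t(\cdot,\cdot)$ for Tur\'an numbers). Three things go wrong.

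\begin{enumerate}
\item \textbf{The merge into $B$ is unjustified.} Superadditivity of Tur\'an numbers controls $\sum_i e(G[C_i])$ only when the components are taken in isolation. You apply it ``under the constraints'', with $S$ attached and $S\cup B$ required to be $K_{\ell+1}$-free. No monotonicity has been shown in that coupled setting.
\item \textbf{The proposed fixed-$t$ extremal configuration violates the Tutte structure.} A complete $\ell$-partite graph on $S\cup B\cup I$ with the singletons $I$ in one part contains every edge between $I$ and the vertices of $B$ outside that part. The only exception is when $B$ also lies in that part, and then $B$ spans no edges. But $B$ and the singletons are distinct components of $G-S$. So the function $f(t)$ whose convexity you want to use is never actually defined, and no convexity computation is given.
\item \textbf{The symmetrization step is unresolved.} Zykov symmetrization inside $S\cup B$ clones a vertex of $S$ onto a non-adjacent vertex of $B$, or the reverse. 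This changes the component structure of $G-S$ and can raise $\nu$. You flag it as ``the delicate part'' but give no argument, and there is no reason the Tutte certificate should survive.
\end{enumerate}

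The fix is to stop trying to preserve $\nu$. Use $S$ only to split $E(G)$ into the set $E_S$ of edges meeting $S$ and the edges inside the components. Since $t\le\nu(G)\le s$ and $n\ge 2s+1$, we have $n-t\ge t+1$.

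\textbf{Edges meeting $S$.} Choose $v^*\in V\setminus S$ maximizing $|N(v)\cap S|$, and set $N=N(v^*)$ and $a=|N|$. Then $G[N]$ is $K_\ell$-free, so
\[
|E_S|\le t(a,\ell-1)+(t-a)(t-1)+(n-t)a\le t(t,\ell-1)+t(n-t).
\]
The last step holds because the difference is at least $(t-a)(n-2t+1)\ge0$.

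\textbf{Edges inside components.} We have $\sum_i e(G[C_i])\le\sum_i t(2\lfloor |C_i|/2\rfloor+1,\ell)\le t(2(s-t)+1,\ell)$. This uses that $x\mapsto t(x+1,\ell)$ is convex, since its increments $x+1-\lfloor (x+1)/\ell\rfloor$ are nondecreasing, and vanishes at $0$, hence is superadditive. Therefore
\[
e(G)\le f(t):=t(n-t)+t(t,\ell-1)+t(2(s-t)+1,\ell).
\]

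\textbf{Convexity in $t$.} Using $t(m+1,\ell)-t(m,\ell)=m-\lfloor m/\ell\rfloor$, the three terms have second differences in $t$ equal to $-2$, at least $0$, and at least $2$. So $f$ is convex on $\{0,\dots,s\}$, and
\[
e(G)\le\max\{f(0),f(s)\}=\max\{t(2s+1,\ell),\,s(n-s)+t(s,\ell-1)\}=\max\{t(2s+1,\ell),\,g(n,\ell,s)\}.
\]

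For intermediate $t$ the two Tur\'an bounds cannot be attained simultaneously. This is why pinning down the exact fixed-$t$ extremal graph, as you proposed, is both harder and unnecessary.
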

	
	\noindent Moreover, Alon and Frankl~\cite{AlonFrankl2024} extended their result to color-critical graphs. They showed that if $F$ is a  color-critical graph with $\chi(F)=l+1>2$, then, for $s>s_0(F)$ and $n>n_0(s)$,
	$ex(n,\{M_{s+1},F\})=g(n,l,s).$
	Gerbner~\cite{Gerbner2024} generalized this result as follows.
	
	\begin{theorem}[Gerbner~\cite{Gerbner2024}]\label{thm:ger}
		If $\chi(F)>2$ and $n$ is large enough, then
		$\ex(n,\{M_{s+1},F\})=\ex(s,\cF)+s(n-s)$, where $\cF$ is the family of
		graphs obtained by deleting an independent set from $F$.
	\end{theorem}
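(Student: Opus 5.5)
We prove Gerbner's theorem (Theorem~\ref{thm:ger}): for $\chi(F)>2$ and $n$ large, $\ex(n,\{M_{s+1},F\})=\ex(s,\cF)+s(n-s)$. The plan is the standard stability argument built on the Gallai--Edmonds structure of graphs with bounded matching number.

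\emph{Lower bound.} Take an $\cF$-free graph $G_0$ on an $s$-set $S$ with $\ex(s,\cF)$ edges. Add an independent set $I$ of size $n-s$ and join every vertex of $I$ to every vertex of $S$. Every edge meets $S$, so the matching number is at most $s$. Suppose a copy of $F$ appears. Its vertices lying in $I$ form an independent set of $F$. Deleting them leaves a member of $\cF$ inside $G_0$, which is a contradiction. The construction has $\ex(s,\cF)+s(n-s)$ edges.

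\emph{Upper bound.} Let $G$ be extremal, so $e(G)\ge s(n-s)$.

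\begin{enumerate}
\item \textbf{Structure.} By the Tutte--Berge formula there is a set $A$ such that $G-A$ has components $C_1,\dots,C_m$ with
\[
\nu(G)=\tfrac12\Bigl(n+|A|-\mathrm{odd}(G-A)\Bigr)\le s.
\]
A standard counting argument follows. Components other than singletons contribute at most $O_s(1)$ vertices and edges in total, since every nontrivial component of size $c$ uses up about $(c-1)/2$ of the matching budget. Hence
\[
e(G)\le |A|(n-|A|)+\binom{|A|}{2}+O_s(1).
\]
Together with $e(G)\ge s(n-s)$ and $n$ large, this forces $|A|=s$. Then $G-A$ consists of isolated vertices plus components spanning only $O(1)$ edges; in fact, equality in Tutte--Berge with $|A|=s$ forces $G-A$ to have no edges.

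\item \textbf{Full joins.} Let $B=V\setminus A$, which is independent. Let $A'\subseteq A$ be the set of vertices with at least $n-|A|-K$ neighbours in $B$, for a large constant $K$. Each vertex of $A\setminus A'$ loses at least $K$ edges relative to the construction. Gaining these back inside $A$ is impossible, since $A$ carries at most $\binom{s}{2}$ edges. Hence $A'=A$ when $K>\binom{s}{2}$.

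\item \textbf{$G[A]$ is $\cF$-free.} The common neighbourhood in $B$ of all of $A$ has size at least $n-s-sK$, which exceeds $|V(F)|$. Suppose $G[A]$ contained $F-I$ for some independent set $I$ of $F$. We could embed the vertices of $I$ into distinct common neighbours in $B$, producing a copy of $F$, a contradiction. Hence
\[
e(G)\le e(G[A])+e(A,B)\le \ex(s,\cF)+s(n-s).
\]
\end{enumerate}

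\emph{Main obstacle.} The hard step is step~1: showing that $|A|=s$ exactly, with $B$ independent, rather than some $|A|<s$ with a few odd components of size $\ge3$. This is where $\chi(F)>2$ is needed.

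\begin{itemize}
\item For $|A|\le s-1$, the count gives
\[
e(G)\le (s-1)(n-s+1)+O_s(1)<s(n-s)
\]
once $n$ is large, so this case is excluded.
\item For $|A|=s$, the Tutte--Berge formula leaves no slack: $\mathrm{odd}(G-A)\ge n-s-s$. Meanwhile $|B|=n-s$, so there are $n-2s$ odd components among $n-s$ vertices. Non-singleton components are therefore possible, but they use at most $O(s)$ vertices.
\end{itemize}

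An edge inside such a component $C\subseteq B$ is harmless for matchings. It could, however, let $F$ embed, since $F-I$ would then only need to be found in $G[A\cup C]$. The remedy is to compare directly. Such components lie in $B$, and deleting their internal edges while joining their vertices fully to $A$ loses nothing. More precisely, a component of size $c\ge3$ carries at most $\binom{c}{2}$ edges. The vertices of $A$ adjacent to $C$ are limited, because otherwise the matching number increases. A careful exchange argument shows the total is at most the bound above. This exchange computation is the step I expect to need the most care.
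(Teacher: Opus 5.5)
The paper does not prove this theorem; it quotes it from Gerbner as background, so there is no in-paper argument to compare yours with. Judged on its own, your steps 1--3 already form a correct and complete proof, for fixed $s$ and $n\ge C(s,F)$.

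Here is why step 1 works. A Tutte--Berge barrier $A$ satisfies $\mathrm{odd}(G-A)\ge n+|A|-2s$. This gives $|A|\le s$, and it shows that the non-singleton components of $G-A$ contain at most $3(s-|A|)$ vertices in total. Comparing $|A|(n-|A|)+\binom{|A|}{2}+O(s^2)$ with $s(n-s)$ then rules out $|A|\le s-1$ once $n\gg s^2$. For $|A|=s$ the barrier inequality gives $\mathrm{odd}(G-A)\ge n-s=|V(G-A)|$, so $G-A$ is edgeless, exactly as you say at the end of step 1.

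Steps 2 and 3 are fine, and step 2 can even be folded into step 3. From $e(G)\ge \ex(s,\cF)+s(n-s)$ and $e(G[A])\le\binom{s}{2}$, at most $\binom{s}{2}$ pairs between $A$ and $B$ are non-edges. Hence $A$ has at least $n-s-\binom{s}{2}$ common neighbours in $B$. For comparison, the paper's hypergraph arguments get the analogous first step from the degree-threshold budget in Lemma~\ref{lem:budget}(i), because no Tutte--Berge formula exists for $k$-graphs. In the graph case your route gives the exact structure ($|A|=s$, $B$ independent) directly.

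What you should fix is the ``Main obstacle'' paragraph: it is wrong, and it contradicts your own step 1. You write $\mathrm{odd}(G-A)\ge n-s-s$ and conclude that non-singleton components may survive when $|A|=s$. The correct bound is $n+|A|-2s=n-s=|B|$, so every component of $G-A$ is a single vertex and nothing is left to handle. The exchange argument you flag as the delicate step is therefore unnecessary, and that paragraph should be deleted.

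Also be precise about where $\chi(F)>2$ enters. It is used only in the lower bound: it guarantees that every member of $\cF$ has an edge, so the construction is $F$-free and $\ex(s,\cF)$ is finite. That yields $e(G)\ge s(n-s)$, which is the inequality driving step 1. The structural argument itself never uses it.
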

	Tur\'an problems with matching constraints have attracted considerable attention in recent years~\cite{LuoZhaoLu2025, XueKang2024, ZhaoLu2024, ZhuChen2025}.
	Theorem \ref{thm:ger} determined $\ex(n,\{M_{s+1},F\})$ for fixed $s$ and sufficiently large $n$. Motivated by Theorem \ref{thm:af}, it is natural to ask for the exact value of $\ex(n,\{M_{s+1},F\})$ for all $s$ with $1\leq s< n/2$.
	We establish the following result, which determines $\ex(n,\{M_{s+1},F\})$ for $n/2-\beta n<s<n/2$. In our setting, $s$ is linear in $n$ rather than a fixed constant as in Theorem \ref{thm:ger}.
	
	\begin{theorem}\label{thm:intro-graph}
		Let $F$ be a graph with $\chi(F)>3$, there exist $\beta>0$ and $s_0$ such that, for all integers $n,s$ with 
		and $\max\{s_0,n/2-\beta n\}<s<n/2$,
		$\ex(n,\{F,M_{s+1}\})=\ex(2s+1,\cF)$.
	\end{theorem}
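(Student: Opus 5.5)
We use the reading of the abstract, so the target is $\ex(n,\{F,M_{s+1}\})=\ex(2s+1,F)$. The plan is a lower-bound construction plus an upper bound from Tutte--Berge and edge counting, with $d:=n-2s\le 2\beta n+1$ small.

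\textbf{Lower bound.} Take an extremal $F$-free graph on $2s+1$ vertices and add $n-2s-1$ isolated vertices. It is $F$-free. It has no $M_{s+1}$, since such a matching would need $2s+2$ non-isolated vertices.

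\textbf{Upper bound: structure.} Let $G$ be $F$-free on $n$ vertices with $\nu(G)\le s$. By the Tutte--Berge formula there is a set $S$, $|S|=t$, such that $G-S$ has at least $t+d$ odd components. We may enlarge $G$ to the edge-maximal graph with this structure, \emph{except that we must keep $F$-freeness}. So we only bound edges:
\[
e(G)\le e(G[S])+e(S,V\setminus S)+\sum_i e(G[C_i]),
\]
where $C_1,\dots,C_q$ are the components of $G-S$, $q\ge t+d$, and $|C_1|\ge|C_2|\ge\cdots$. Hence $|C_1|\le n-t-(q-1)\le 2s+1-2t$, and $\sum_{i\ge2}|C_i|\le n-t-|C_1|$. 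Write $r=\chi(F)-1\ge3$. By Erd\H{o}s--Stone, $\ex(m,F)=(1-1/r)m^2/2+o(m^2)$.

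\textbf{Case $t=0$.} Set $n_1=|C_1|\le 2s+1$ and $m=n-n_1$. Then
\[
e(G)\le \ex(n_1,F)+\binom{m}{2}.
\]
If $n_1=2s+1-j$, then $m$ lies between $j$ and $j+d$. We compare this with $\ex(2s+1,F)$ using $\ex(N+1,F)-\ex(N,F)\ge \delta(\text{extremal graph})\ge(1-1/r-o(1))N$. Adding $j$ vertices therefore gains about $j(1-1/r)2s$. This exceeds $\binom{j+d}{2}$ as long as $j+d\le 2\beta n+j$ and $\beta$ is small.

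The delicate part is $j$ large, e.g.\ $n_1$ tiny. Then all components are small and the total is at most $\max_i|C_i|\cdot n/2$. We handle this by a convexity argument: $\sum_i\binom{|C_i|}{2}$ with sizes bounded by $n_1$ is at most $(n/n_1)\binom{n_1}{2}$. This is far below $\tfrac{2}{3}\cdot 2s^2$ unless $n_1$ is linear, which reduces to the computation above.

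\textbf{Case $t\ge1$.} The $t$ vertices of $S$ contribute at most $\binom t2+t(n-t)\le tn$ edges. The components contribute at most $\ex(2s+1-2t,F)+(\text{small-component edges})$. Shrinking from $2s+1$ to $2s+1-2t$ vertices loses at least about
\[
2t(1-1/r)\cdot 2s\ge \tfrac83 ts
\]
when $t=o(s)$. Since $tn\approx 2ts<\tfrac83 ts$, we get $e(G)<\ex(2s+1,F)$. This step uses $r\ge3$.

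For $t$ linear in $s$, we instead use that $G[S\cup C_1]$ is $F$-free. Also, the vertices outside $S\cup C_1$ number at least $t+d-1$ and see only $S$ plus tiny components. Bounding $e(S,\text{rest})\le t(n-t-|C_1|)$ gives a quantity of order
\[
\ex(t+|C_1|,F)+t(n-t-|C_1|).
\]
This is a concave/convex function of $(t,|C_1|)$ maximized at endpoints. At $t=s$ it is about $s^2+(1-\tfrac1r)\tfrac{s^2}{2}\le \tfrac54 s^2$. This is below $(1-\tfrac1r)2s^2\ge\tfrac43 s^2$, again using $r\ge3$, with a constant gap absorbing the $o(s^2)$ and $\beta$ terms once $s\ge s_0$.

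\textbf{Main obstacle.} The hardest step is making the inequalities \emph{exact} rather than asymptotic for intermediate $t$ and for $t=0$ with $n_1$ close to $2s+1$. Erd\H{o}s--Stone only gives $o(s^2)$ errors, but the comparison needs an exact marginal bound. I would first try to use only the exact monotonic increment $\ex(N+1,F)-\ex(N,F)\ge\delta\ge(1-1/r-\eps)N$, which holds for large $N$ by the Erd\H{o}s--Simonovits minimum-degree stability of extremal graphs. With it, every comparison above becomes an exact telescoping inequality with linear slack, choosing $\beta$ and $\eps$ small in terms of the gap $\tfrac43-\tfrac54$ (respectively $\tfrac83-2$).
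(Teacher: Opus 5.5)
\textbf{There is a genuine gap: your ``$t$ linear in $s$'' case fails for $\chi(F)=4$} (so $r=3$, e.g.\ $F=K_4$), which the theorem covers.

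At $t=s$ you have $|C_1|\le1$. Your bound $\ex(t+|C_1|,F)+t(n-t-|C_1|)$ is then at least $\ex(s,F)+s(s+d-1)$. For $r=3$ this equals $\tfrac{s^2}{3}+s^2+s(d-1)+o(s^2)=\tfrac43 s^2+s(d-1)+o(s^2)$. The coefficient $(1-\tfrac1r)\tfrac12$ is $\tfrac13$ at $r=3$, not $\tfrac14$, so your ``$\le\tfrac54 s^2$'' is an arithmetic slip.

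Since $\ex(2s+1,F)=\tfrac43 s^2+o(s^2)$, there is no constant gap. It is worse than a lost margin. For $F=K_4$, $d\ge3$ and $s\ge2$, one checks $t(s,3)+s(s+d-1)>t(2s+1,3)$ exactly, so the upper bound you derive exceeds the target. Your bound is convex in $t$, so its maximum sits at this endpoint. Roughly, taking $|C_1|=2s-2t$, it stays above $\tfrac43 s^2$ for all $t\ge s-\tfrac34 d$, a range of linear length when $d=\Theta(\beta n)$. For $r\ge4$ the same endpoint comparison leaves a gap of about $\tfrac{r-3}{2r}s^2-sd$, so the failure is exactly at $\chi(F)=4$.

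The value $\tfrac54 s^2$ is what you would get if $G[S]$ were free of the family obtained from $F$ by deleting an independent set, as in Gerbner's construction. That is only forced when many vertices outside $S$ share large common neighbourhoods in $S$. Proving the needed trade-off between $e(G[S])$ and $e(S,V\setminus S)$ requires a supersaturation or common-neighbourhood argument that is not in your plan. Counting that uses only the $F$-freeness of $G[S\cup C_1]$ cannot close this case.

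\textbf{The missing idea is to apply Erd\H{o}s--Simonovits stability to $G$ itself.} The paper does exactly this. Since $e(G)\ge\ex(2s+1,F)\ge t(2s+1,r)\ge t(n,r)-2\beta n^2$, the graph $G$ is $\eps$-close to $T(n,r)$. The paper then bypasses Tutte--Berge entirely, via Lemma~\ref{lem:close-reduction}:
\begin{enumerate}
\item It removes the few atypical vertices.
\item It applies Lemma~\ref{lem:partite-matching} (near-perfect matchings in near-balanced $r$-partite graphs; this is where $r\ge3$ enters). This shows that all vertices outside an independent set lie within $2s+1$ vertices.
\item It reattaches the low-degree vertices to the common neighbourhood $N(S)$ of $|V(F)|$ vertices of one class. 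This yields a strictly larger $F$-free graph unless the structure is already extremal.
\end{enumerate}
Closeness to $T(n,r)$ with $r\ge3$ would also rescue your own framework. For $|S|<n/2$, at least two classes keep $\Omega(n)$ vertices outside $S$, so all typical vertices outside $S$ lie in one component. This forces $t+d-1=O(\sqrt{\eps}\,n)$, and only your small-$t$ computation remains.

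\textbf{Two smaller points also need repair.}
\begin{enumerate}
\item The inequality $\ex(N+1,F)-\ex(N,F)\ge\delta$ is in the wrong direction. Deleting a minimum-degree vertex from an extremal graph on $N+1$ vertices gives $\ex(N+1,F)-\ex(N,F)\le\delta$. The lower bound $(1-\tfrac1r-\eps)N$ is true for large $N$. However, it needs stability of the extremal graph together with adding a vertex joined to the common neighbourhood of $|V(F)|$ vertices in one class; cloning a single vertex need not preserve $F$-freeness.
\item In the case $t=0$, your estimate $\binom{m}{2}$ fails for small $j$. For $j=0$ the gain is $0$, while $\binom{d-1}{2}$ can be of order $s^2$. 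Use instead that the $j+d-1$ leftover vertices form at least $d-1$ components, so they span at most $\binom{j+1}{2}$ edges.
\end{enumerate}
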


	Many results have been obtained recently concerning the Turn-type variant where the  matching $M_{s+1}^k$ and some hypergraphs are forbidden.
	Gerbner et al.~\cite{GerbnerTompkinsZhou2025} consider the forbidding hypergraphs with chromatic number 2 and at least 3, respectively, as well as expansions of bipartite and non-bipartite graphs.
	Wang et al.~\cite{WangWangYang2025} studied  $F_5$-free $3$-graph, where $F_5$ has vertex set $\{a,b,c,d,e\}$ and edge set $\{\{a,b,c\}, \{a,b,d\}, \{c,d,e\}\}$.
	Chen et al.~\cite{ChenLiuQiYang2025} studied $F_{3,2}$-free $3$-graph, where $F_{3,2}$ with the same vertex set has edge set $\{\{a,b,c\},\{a,d,e\},\{b,d,e\},\{c,d,e\}\}$.
	Xu et al.~\cite{XuZengZhang2026} investigated Berge-$K_3$-free $3$-graphs and $4$-graphs.
	Zhou and Yuan~\cite{ZhouYuan2026}  explored  linear Tur\'an problems for expansions of graphs.
	In 1965, Erd\H{o}s~\cite{Erdos1965} proposed the following 	conjecture.
	\begin{conjecture}[Erd\H{o}s Matching Conjecture~\cite{Erdos1965}]
		Let $n,s,k$ be positive integers such that $k\ge2$ and
		$n\ge k(s+1)-1$. Then
		\[
		\ex_k(n,M_{s+1}^k)=
		\max\left\{\binom nk-\binom{n-s}{k},\binom{k(s+1)-1}{k}\right\}.
		\]
	\end{conjecture}
	\noindent For progress on  Conjecture 1.1,  see
	\cite{AlonEtAl2012,BollobasDaykinErdos1976,ErdosKoRado1961,
		FranklRodlRucinski2012,Frankl2013,FranklKupavskii2019,
		FranklKupavskii2022,HuangLohSudakov2012,KolupaevKupavskii2023,LuYuYuan2021,
		LuczakMieczkowska2014,Frankl2017newrange}.
	In this paper, we study Turn-type problems forbidding both   $M^k_{s+1}$ and fixed hypergraphs  over a comparable range of $n$.

	Let $\mathcal{K}_{\ell+1}^k$ be the family of all $k$-graphs $F$ with at most $\binom{\ell+1}{2}$ edges for which there exists an $(\ell+1)$-set $L$ such that every pair of vertices in $L$ is contained in some edge of $F$.
	Any such set $L$ is called a \emph{core} of $F$.
	Let $H_{\ell+1}^k$ be the $k$-graph obtained from the complete graph $K_{\ell+1}$ by enlarging each edge with a set of $k-2$ new vertices, where all these sets are pairwise disjoint. Then $H_{\ell+1}^k\in\mathcal{K}_{\ell+1}^k$. Mubayi~\cite{Mubayi2006} proved the following theorem.
	
	\begin{theorem}[Mubayi~\cite{Mubayi2006}]\label{thm:weak-turan}
		Let $n\ge\ell\ge k\ge2$. Then
		$\ex_k(n,\cK_{\ell+1}^k)=t_k(n,\ell)$, and $T_k(n,\ell)$ is the unique
		extremal $k$-graph.
	\end{theorem}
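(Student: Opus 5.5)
The plan is to prove the upper bound by Zykov symmetrization and then recover uniqueness by tracking when the symmetrization steps are strict.

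\emph{Lower bound.} First check that $T_k(n,\ell)$ is $\cK_{\ell+1}^k$-free. Among any $\ell+1$ vertices, two lie in the same part $V_i$. No edge of $T_k(n,\ell)$ contains two vertices of the same part, so this pair is uncovered and no $(\ell+1)$-set can be a core.

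\emph{Key reformulation.} Say a pair $\{u,v\}$ is \emph{covered} in $H$ if some edge contains it. Suppose $H$ has an $(\ell+1)$-set $L$ whose pairs are all covered. Choosing one covering edge per pair gives a subgraph with at most $\binom{\ell+1}{2}$ edges in which $L$ is a core, i.e.\ a member of $\cK_{\ell+1}^k$. Hence $H$ is $\cK_{\ell+1}^k$-free if and only if the \emph{covering graph} $G_H$ (vertex set $V(H)$, edges the covered pairs) is $K_{\ell+1}$-free. This lets us work only with the covering graph.

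\emph{Symmetrization.} Let $H$ be $\cK_{\ell+1}^k$-free with $e(H)$ maximal. Take an uncovered pair $u,v$, so $d_H(\{u,v\})=0$, and assume $d_H(u)\ge d_H(v)$. Form $H'$ by deleting all edges at $v$ and adding, for each $f\in N_H(u)$, the edge $f\cup\{v\}$; note $u\notin f$ and $v\notin f$. Then $e(H')=e(H)-d_H(v)+d_H(u)\ge e(H)$. Moreover, $G_{H'}$ is $G_H$ with $v$ replaced by a non-adjacent twin of $u$. Any clique in $G_{H'}$ contains at most one of $u,v$, so it maps to a clique of $G_H$, and $H'$ stays $\cK_{\ell+1}^k$-free. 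Following Zykov's argument, repeat this among the classes of ``clones''. The goal is to reach an extremal $H^*$ in which non-coveredness is an equivalence relation, so that $G_{H^*}$ is complete multipartite. To guarantee termination, always clone a vertex of maximum degree onto all vertices that are uncovered with it, and use a potential such as the number of twin classes. Since $G_{H^*}$ is $K_{\ell+1}$-free, it has at most $\ell$ parts. Every edge of $H^*$ meets each part in at most one vertex, so $H^*$ is a subgraph of a complete $\ell$-partite $k$-graph with parts $U_1,\dots,U_\ell$. Its edge count $\sum_{I\in\binom{[\ell]}{k}}\prod_{i\in I}|U_i|$ is maximized exactly by the balanced partition: moving a vertex from a larger to a smaller part strictly increases the sum when the sizes differ by at least $2$, using $\ell\ge k$. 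This gives $e(H)\le t_k(n,\ell)$.

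\emph{Uniqueness, the main obstacle.} Uniqueness is the delicate step, because symmetrization can forget the original structure. I would first show that in an extremal $H$, any uncovered pair $u,v$ has $d_H(u)=d_H(v)$. Otherwise, cloning the higher-degree vertex strictly increases the edge count, contradicting maximality. Next, I would show that non-coveredness is transitive in $H$ itself. Suppose $u\not\sim v$ and $v\not\sim w$ but $u\sim w$. Cloning $v$ onto both $u$ and $w$ changes the count by $2d(v)-d(u)-d(w)+d(\{u,w\})$. Since $d(u)=d(w)=d(v)$ by the previous step, this equals $d(\{u,w\})$, which is positive because $u\sim w$; this contradicts maximality. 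So $G_H$ is complete multipartite with at most $\ell$ parts, and $H$ is contained in a complete $\ell$-partite $k$-graph. Equality forces $H$ to be that whole $k$-graph with balanced parts, i.e.\ $H=T_k(n,\ell)$.
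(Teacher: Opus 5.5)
The paper does not prove this statement; it quotes it from Mubayi, so there is no internal argument to compare with. Your proof is correct and self-contained. Its core is the right one: $H$ is $\cK_{\ell+1}^k$-free if and only if the covering graph $G_H$ is $K_{\ell+1}$-free, and cloning a vertex onto vertices uncovered with it preserves this property. Three points should be tightened.

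First, after cloning $u$ onto $v$, the covering graph $G_{H'}$ is in general only a \emph{subgraph} of ``$G_H$ with $v$ replaced by a non-adjacent twin of $u$''. A pair $\{a,b\}$ with $a,b\neq v$ that was covered only by edges through $v$ becomes uncovered. This does not affect freeness, since a clique of $G_{H'}$ still maps to a clique of $G_H$.

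Second, the same observation weakens your iterated symmetrization paragraph. Termination via ``the number of twin classes'' is asserted rather than proved. Moreover, because cloning can destroy covered pairs away from the cloned vertices, structure built in earlier rounds need not survive later ones. You do not need that paragraph at all. Apply your final paragraph to a $\cK_{\ell+1}^k$-free $H$ with the maximum number of edges:
\begin{itemize}
\item maximality forces $d_H(u)=d_H(v)$ for every uncovered pair;
\item cloning $v$ onto both $u$ and $w$ then changes the edge count by $2d(v)-d(u)-d(w)+d_H(\{u,w\})=d_H(\{u,w\})>0$, and one checks that this double cloning also preserves freeness.
\end{itemize}
Hence non-coveredness is an equivalence relation with at most $\ell$ classes. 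This gives the upper bound and uniqueness at once, exactly as in Zykov's argument for an extremal graph. So drop the iteration.

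Third, the balancing step is not true as stated. Suppose you move one vertex from a part of size $a_i$ to a part of size $a_j\le a_i-2$. Then $\sum_{I}\prod_{i\in I}|U_i|$ changes by $(a_i-a_j-1)$ times the $(k-2)$-th elementary symmetric polynomial of the other $\ell-2$ part sizes. That is $0$ when fewer than $k-2$ of those parts are nonempty, for example when all vertices lie in one part. The increase is strict whenever at least $k$ parts are nonempty. This holds at any maximizer, because the balanced partition already gives $t_k(n,\ell)>0$ for $n\ge\ell\ge k$. With this qualifier, the unique maximizer is the balanced partition into $\ell$ nonempty parts, and $e(H)=t_k(n,\ell)$ forces $H=T_k(n,\ell)$, as you say.
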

	Later, Pikhurko \cite{Pikhurko} determined $\ex_k(n,H_{\ell+1}^k)$ for sufficiently large $n$.
	For $k\ge 3$ and $n/k-\beta n<s\leq(n-k)/k$, we investigate $k$-uniform hypergraphs forbidding both $H_{\ell+1}^k$
	and $M^k_{s+1}$ with separate discussions for $\ell>k$ and $\ell=k$. Our results demonstrate that these two parameter regimes yield entirely different extremal constructions.
	
	\begin{theorem}\label{thm:intro-hypergraph}
		For every $\ell>k\ge3$, there exist $\beta>0$ and $s_0$ such that, for
		all integers $n,s$ with  $\max\{s_0,n/k-\beta n\}<s\leq(n-k)/k$,
		$\ex_k\bigl(n,\{H_{\ell+1}^k,M_{s+1}^k\}\bigr)
		=\ex_k\bigl(ks+k-1,H_{\ell+1}^k\bigr)$.
	\end{theorem}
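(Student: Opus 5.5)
The lower bound comes from a construction. Take an extremal $H_{\ell+1}^k$-free $k$-graph on $ks+k-1$ vertices and add $n-(ks+k-1)$ isolated vertices. Every edge lies inside a set of $ks+k-1<k(s+1)$ vertices, so the graph has no $M^k_{s+1}$. It is still $H_{\ell+1}^k$-free, so
\[
\ex_k\bigl(n,\{H_{\ell+1}^k,M_{s+1}^k\}\bigr)\ge \ex_k(ks+k-1,H_{\ell+1}^k).
\]

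For the upper bound, let $G$ be an extremal graph on $n$ vertices. Since $n-ks$ is at most $\beta k n$, the deficiency is small, so I want to show that $G$ has a set of at least $n-(ks+k-1)$ vertices that can be discarded without losing edges. The plan has three steps.

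\emph{Step 1: $G$ is dense and near-extremal.} By the construction, $e(G)\ge \ex_k(ks+k-1,H_{\ell+1}^k)\ge t_k(ks+k-1,\ell)$, which is at least $(1-o(1))\, t_k(n,\ell)-O(\beta)n^k$. Pikhurko's stability for $H_{\ell+1}^k$ (expanded cliques with $\ell>k$) then says $G$ is $\eps n^k$-close to $T_k(n,\ell)$, with $\eps\to 0$ as $\beta\to0$ and $s_0\to\infty$. So there is a partition $V_1,\dots,V_\ell$ with $|V_i|\approx n/\ell$ and few missing crossing edges. A standard cleaning removes a set $B$ of $o(n)$ vertices of low degree or with many edges inside parts, leaving the rest with the Turán-like structure.

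\emph{Step 2: use the matching constraint.} The key input is a hypergraph Tutte/Berge-type bound. If $\nu(G)\le s$ and $n\ge ks+k$, then either there is a set $S$ such that every edge meets $S$ (an "$S$-cover" obstruction), or the vertices are essentially confined to a small dense set. In the near-Turán structure with all parts of linear size, any vertex set of size $m\le ks+k-1$ in the cleaned part has a perfect-ish matching. So the missing $n-ks$ matching capacity must come from deficient vertices. Concretely, I would take a maximum matching $M$ of size at most $s$. This leaves $U=V\setminus V(M)$ with $|U|\ge n-ks\ge k$ and $U$ independent. The swapping arguments, enabled by the high co-degrees in the Turán-like core, then show that vertices of $U$, and the vertices they could augment with, have degree $o(n^{k-1})$.

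\emph{Step 3: delete and count.} If some vertex has degree below $\delta_{\ell}(n)$, the minimum degree of the $H_{\ell+1}^k$-extremal graph, then deleting it and applying induction on $n$ (keeping $s$) is impossible, because the ratio moves out of range. Instead, I would show directly that the set $D$ of low-degree vertices has size at least $n-(ks+k-1)$, or else an $M_{s+1}^k$ exists by greedily matching high-degree vertices into the near-Turán core. Then
\[
e(G)\le e(G-D)+|D|\cdot o(n^{k-1}).
\]
Here $G-D$ has at most $ks+k-1+|D'|$ vertices. Comparing with the loss $\ex_k(N,H)-\ex_k(N-1,H)=\Theta(N^{k-1})$ per vertex gives the bound, and equality forces the low-degree vertices to be isolated.

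\textbf{Main obstacle.} I expect the hardest part to be Step 2/3: proving that $G$ cannot profit from a few "spread-out" bad vertices while keeping $\nu\le s$. I would first try an exchange argument. A vertex of large degree outside $V(M)$, together with $k-1$ vertices of $U$, would allow the matching to be augmented using the high co-degrees of the Turán core. This is where $\ell>k$ enters, since it guarantees that edges through any $k-1$ core vertices in distinct parts exist in abundance.
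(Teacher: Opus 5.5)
\textbf{Gap in Step 3.} Your Steps 1--2 broadly match the paper: stability via Pikhurko's theorem, then a matching/exchange analysis. Step 3, however, is the step that has to deliver the \emph{exact} value, and it does not close.

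The set $D$ of vertices you discard has size about $n-ks$, which can be as large as $k\beta n$. Bounding the edges at $D$ by $|D|\cdot o(n^{k-1})$ therefore only gives $e(G)\le \ex_k(ks+k-1,H_{\ell+1}^k)+o(n^k)$ when $|V(G-D)|=ks+k-1$. The per-vertex loss $\ex_k(N,H)-\ex_k(N-1,H)=\Theta(N^{k-1})$ helps only if the shortfall $j=ks+k-1-|V(G-D)|$ satisfies $j\cdot\Theta(n^{k-1})\gg |D|\cdot o(n^{k-1})$. You have no reason to expect this, since $j$ may be tiny or zero. Likewise, ``equality forces the low-degree vertices to be isolated'' is asserted, not derived.

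The missing idea is a charging argument: the edges at the outside vertices must be bounded in terms of the number of low-degree vertices \emph{inside} the $(ks+k-1)$-set, not the number of outside vertices. The paper does this as follows (Lemma~\ref{lem:close-reduction}).
\begin{enumerate}[label=(\arabic*)]
\item Let $X$ be the vertices missing more than $\sqrt{\eps}n^{k-1}$ Tur\'an link edges, and set $U=V\setminus X$.
\item Choose a matching $M$ of edges meeting $X$ that maximizes $|U\cup V(M)|$. Lemma~\ref{lem:partite-matching} then forces $|U\cup V(M)|\le ks+k-1$.
\item An exchange argument shows that every edge meeting $I=V\setminus(U\cup V(M))$ meets an edge of $M$ containing a low-degree vertex. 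Picking one such vertex per edge gives a set $U_0$.
\item Hence the edges at $I$ number at most $k|I||U_0|n^{k-2}=O(\sqrt{\eps})|U_0|n^{k-1}$, proportional to $|U_0|$. If $U_0=\varnothing$, then $I$ is isolated outright.
\item Delete all edges at $I\cup U_0$. Re-attach each $x\in U_0$ with link equal to the common link of $q$ core vertices of one part, where $q$ is the number of vertices of $H_{\ell+1}^k$. This stays $H_{\ell+1}^k$-free, because in any copy you can swap $x$ for an unused one of those $q$ vertices. It gains $\Omega(|U_0|n^{k-1})$ edges, and all edges now lie on at most $ks+k-1$ vertices.
\end{enumerate}

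In your language: take $D=I\cup U_0$. Then $|V(G-D)|\le ks+k-1-|U_0|$, and the loss $|U_0|\Theta(n^{k-1})$ dominates everything. Your increment bound also needs this same cloning trick to be justified.

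\textbf{Two smaller inaccuracies.} First, if $M$ is a maximum matching of $G$, the uncovered vertices need not have low degree. In the extremal example, $k-1$ vertices of the dense part stay uncovered and have full degree. This is why the paper builds $M$ only from edges meeting the exceptional set $X$.

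Second, $\ell>k$ is not needed for ``abundance of edges through $k-1$ core vertices in distinct parts''; that holds for $\ell=k$ too. It is needed for Lemma~\ref{lem:partite-matching}: a nearly balanced $\ell$-partite $k$-graph with high minimum degree has a matching missing at most $k-1$ vertices. This fails for $\ell=k$, where every edge meets every part and so $\nu$ is at most the smallest part. Indeed Theorem~\ref{thm:intro-boundary} shows the answer changes in that case.
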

	
	\begin{theorem}\label{thm:intro-boundary}
		For every $k\ge3$, there exist $\beta>0$ and
		$s_0$ such that, for all integers $n,s$ with $\max\{s_0,n/k-\beta n\}<s\leq (n-k)/k$,
		$\ex_k\bigl(n,\{H_{k+1}^k,M_{s+1}^k\}\bigr)
		=s\,t_{k-1}(n-s,k-1)$.
	\end{theorem}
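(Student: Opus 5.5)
The lower bound comes from a construction; for the upper bound I would use stability for $H_{k+1}^k$ to pin down the structure, then find a small cover $S$ and bound each link of $S$ by Theorem~\ref{thm:weak-turan}.

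\textbf{Lower bound.} Fix a set $S$ of $s$ vertices and split the remaining $n-s$ vertices into a balanced partition $W_1,\dots,W_{k-1}$. Let the edges be all $k$-sets consisting of one vertex of $S$ and one vertex from each $W_i$; this gives exactly $s\,t_{k-1}(n-s,k-1)$ edges.
\begin{itemize}
\item Every edge meets $S$ in exactly one vertex, so there is no $M^k_{s+1}$.
\item No edge covers a pair inside $S$ or a pair inside some $W_i$. Hence a core of any member of $\cK_{k+1}^k$ has at most one vertex in $S$ and at most one vertex in each $W_i$, i.e.\ at most $k$ vertices. So the graph is $\cK_{k+1}^k$-free, and in particular $H_{k+1}^k$-free.
\end{itemize}
The hypothesis $s\le (n-k)/k$ gives $n-s\ge (k-1)(s+1)$, so every $W_i$ has at least $s+1$ vertices. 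Consequently this construction is at least $s(s+1)^{k-1}\ge t_k(ks+k-1,k)$, so it beats the "clique-like" construction on $ks+k-1$ vertices. This is why the answer here differs from Theorem~\ref{thm:intro-hypergraph}.

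\textbf{Upper bound, step 1 (stability).} Let $H$ be an extremal $n$-vertex $\{H^k_{k+1},M^k_{s+1}\}$-free $k$-graph. Then $e(H)\ge s\,t_{k-1}(n-s,k-1)=(1-O(\beta))\,t_k(n,k)$, because $s\approx n/k$ and $n-s\approx (k-1)n/k$. So $H$ is an almost-extremal $H^k_{k+1}$-free graph. By Pikhurko's stability for $H^k_{k+1}$ (the argument behind his exact result for $\ex_k(n,H_{k+1}^k)$), there is a partition $V_1,\dots,V_k$ with $|V_i|=n/k\pm\eps n$ such that all but $\eps n^k$ edges are crossing. After the standard cleaning, all but at most $\eps n$ vertices are \emph{typical}: each misses only $\eps n^{k-1}$ of its crossing $(k-1)$-sets.

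\textbf{Upper bound, step 2 (finding a cover $S$).} This is the main obstacle. I want to show that all edges of $H$ meet a set $S$ of at most $s$ vertices that lies essentially inside a single part, say $V_1$.
\begin{itemize}
\item A near-complete $k$-partite graph on parts of size about $n/k$ has a matching of size about $\min_i|V_i|$.
\item So $\nu(H)\le s<n/k$ should force one part, $V_1$, to have at most $s$ typical vertices.
\item Then I would use greedy absorption. Any edge avoiding $V_1$, or any atypical vertex participating in many edges, can be added to a near-perfect matching built from typical vertices, using the surplus $|V_i|-s$ in the other parts, to produce an $M^k_{s+1}$.
\end{itemize}
This step needs care with $\beta,\eps$ and with atypical vertices. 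The first thing I would try is an augmenting-path style exchange: swap a typical edge out for two disjoint edges that use the surplus vertices. The expected conclusion is a set $S$ with $|S|\le s$ such that every edge meets $S$ in exactly one vertex, and every edge avoids all pairs inside $S$.

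\textbf{Upper bound, step 3 (bounding the links).} Granting step 2,
\[
e(H)\le\sum_{v\in S}\bigl|\{f\in N_H(v): f\cap S=\emptyset\}\bigr|.
\]
For $v\in S$, consider its link restricted to $V\setminus S$. This is a $(k-1)$-graph on at most $n-s$ vertices, and I claim it is $\cK^{k-1}_k$-free. Suppose it contained a copy with core $L$, $|L|=k$. Adding $v$ to each of its at most $\binom{k}{2}$ edges gives edges of $H$ covering every pair in $L\cup\{v\}$. By choosing the copy to be $H^{k-1}_k$ (the new vertices are disjoint), this yields an $H^k_{k+1}$ in $H$, a contradiction. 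Since $k-1\ge2$ and $\ell=k-1$, Theorem~\ref{thm:weak-turan} bounds each such link by $t_{k-1}(n-s,k-1)$. Summing over $v\in S$ gives $e(H)\le s\,t_{k-1}(n-s,k-1)$.
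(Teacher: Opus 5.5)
Your lower bound and Step 1 match the paper. The gap is Step 2, which carries the whole content of the theorem and is left as an expectation. Producing a set $S$ with $|S|\le s$ that meets every edge of a maximum $H$ exactly once amounts to proving that $H$ is exactly $H(n,k,s,k)$. Your greedy absorption does not address the real obstacles: edges with two vertices in one part, and the atypical vertices. Moreover, the surplus $|V_i|-s$ you want to use is $n/k-s\pm\eps n$, which for $s$ near $(n-k)/k$ may be $1$ or even negative, so there is nothing to absorb with. The paper never looks for a cover. It deletes the set $R$ of vertices with many sparse cross pairs and shows that every edge meets each $U_i=W_i\setminus R$ at most once. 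It then reassigns $R$ to maximize $\Phi(\mathcal P)$. For the non-crossing edges $\mathcal B=E(G)\setminus E(T)$ and the missing crossing edges $\mathcal M=E(T)\setminus E(G)$ it proves $|\mathcal B|\le\frac12|\mathcal M|$: otherwise some $z\in R_0$ lies in so many non-crossing edges that moving it to a part where it has few dense neighbours increases $\Phi$. Hence $e(G)\le h_{n,k}(a)-\frac12|\mathcal M|$, where $a$ is the smallest part size. If $a\le s$, Lemma~\ref{lem:boundary-product} finishes. If $a>s$, a uniformly random matching of $T$ covering $P_1$ contains at least $a-s$ missing edges because $\nu(G)\le s$. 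This gives $|\mathcal M|\ge(a-s)\prod_{i\ge2}|P_i|$, which beats $h_{n,k}(a)-h_{n,k}(s)\le k\beta(a-s)n^{k-1}$. This averaging trade-off between an oversized smallest part and many missing crossing edges is the idea your proposal lacks.

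Even granting Step 2, Step 3 fails. Observation~\ref{obs:weak-link} needs $H$ to be $\cK^k_{k+1}$-free, but $H$ is only $H^k_{k+1}$-free, and its links need not be $\cK^{k-1}_k$-free. For $k=3$, the edges $\{v,x,y\},\{v,y,z\},\{v,x,z\}$ put a triangle in the link of $v$ (with $S=\{v\}$ meeting each edge once) yet contain no $H^3_4$. You cannot ``choose the copy to be $H^{k-1}_k$'': Theorem~\ref{thm:weak-turan} only supplies some member of $\cK^{k-1}_k$. Even a genuine $H^{k-1}_k$ in the link does not give $H^k_{k+1}$ after adding $v$. That yields only $\binom{k}{2}<\binom{k+1}{2}$ edges, all through $v$. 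A copy of $H^k_{k+1}$ with core $L\cup\{v\}$ instead needs the pairs of $L$ covered by edges avoiding $v$, plus $k$ further edges for the pairs $\{v,x\}$, all with pairwise disjoint private $(k-2)$-sets. To bound links you need the dense-pair extension of Lemma~\ref{obs:extension}, and that works only for the subgraph of edges all of whose pairs are dense. Discarding the other edges costs $O(n^{k-1})$ edges, too many for an exact bound without a further argument like the one the paper builds on Lemma~\ref{lem:exp-structure}.
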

	

	Given a $k$-graph $H$, the link of $v\in V(H)$ is defined as
	$L_H(v)=\left\{A\in\binom{V(H)}{k-1}:\{v\}\cup A\in H\right\}$.
	Thus $L_H(v)$ is a $(k-1)$-graph and $d_H(v)=|L_H(v)|$.
	Let $\ell\ge k\ge3$, $s\ge1$, and $n\ge\ell+s$. Let
	$\{V_0, V_1, \cdots, V_{\ell-1}\}$ be a partition of a set $V$ of size $n$ such that $|V_0|=s$ and
	$|V_i|=\floor{\frac{n-s+i-1}{\ell-1}}$ for each $i\in[\ell-1]$. Then $\{V_1, \cdots, V_{\ell-1}\}$ is a balance partition of $V\setminus{V_0}$.
	Define the $k$-graph $H(n,\ell,s,k)$ with vertex set $V$ and edge set
	\[
	E(H(n,\ell,s,k)):=
	\left\{e\in\binom{V_0\cup\cdots\cup V_{\ell-1}}k:
	|e\cap V_0|=1,\ |e\cap V_i|\le1\ \text{for each} \ i\in[\ell-1]\right\}.
	\]
	Note that in the hypergraph $H(n,\ell,s,k)$, for every $x\in V_0$,
	$L_{H(n,\ell,s,k)}(x)=T_{k-1}(n-s,\ell-1)$, and hence
	$|E(H(n,\ell,s,k))|=s\,t_{k-1}(n-s,\ell-1)$.
	Since every edge meets $V_0$, the matching number of $H(n,\ell,s,k)$ is at most $s$.
	Since every $(\ell+1)$-set of $V$ has
	two vertices in a common part $V_i$ and no edge of $H(n,\ell,s,k)$ contains
	both of them,  	$H(n,\ell,s,k)$ is $\cK_{\ell+1}^k$-free. In particular, $H(n,\ell,s,k)$ is $H_{\ell+1}^k$-free. For sufficiently large $n$, Yang et al. \cite{YangZengZhang2025} determined $\ex_k(n,\cK_{\ell+1}^k\cup{M_{s+1}^k})$.
	Zhao et al.\cite{ZhaoWangZhou2026} determined $\ex_k(n,\{H_{\ell+1}^k,M_{s+1}^k\})$ for large $s$ and sufficiently large $n$.
	
	\begin{theorem}[Yang, Zeng and Zhang\chg{~\cite{YangZengZhang2025}}]
		Let $\ell\ge k\ge3$ and $s\ge1$. For sufficiently large $n$,
		\[
		\ex_k(n,\cK_{\ell+1}^k\cup\{M_{s+1}^k\})
		=s\,t_{k-1}(n-s,\ell-1),
		\]
		and $H(n,\ell,s,k)$ is the unique extremal hypergraph.
	\end{theorem}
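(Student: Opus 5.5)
The lower bound is immediate from the construction $H(n,\ell,s,k)$ described above. It has $s\,t_{k-1}(n-s,\ell-1)$ edges and matching number at most $s$, and it is $\cK_{\ell+1}^k$-free. So the work is in the upper bound. Let $H$ be an $n$-vertex $k$-graph that is $\cK_{\ell+1}^k\cup\{M_{s+1}^k\}$-free, where $n$ is large compared with $s,\ell,k$. The plan is to show $|E(H)|\le s\,t_{k-1}(n-s,\ell-1)$ by splitting the vertices into high- and low-degree vertices.

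First, take a maximum matching $M$ of $H$. It has at most $s$ edges, so $W=V(M)$ has at most $ks$ vertices and every edge meets $W$. Let $S$ be the set of vertices of degree at least $\eps n^{k-1}$, for a small constant $\eps=\eps(k,\ell,s)$. Two standard facts about $S$:
\begin{itemize}
\item $|S|\le s$. If $s+1$ vertices had degree $\ge\eps n^{k-1}\gg ks\,n^{k-2}$, we could greedily pick disjoint edges through them, giving an $M_{s+1}^k$.
\item Every edge avoiding $S$ contains a vertex of $W\setminus S$, which has degree $<\eps n^{k-1}$. Hence the number of edges missing $S$ is at most $ks\eps n^{k-1}$.
\end{itemize}
Edges meeting $S$ in at least two vertices number $O(s^2n^{k-2})$. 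So
\[
|E(H)|\le\sum_{x\in S}|L_H(x)\cap\tbinom{V\setminus S}{k-1}|+ks\eps n^{k-1}+O(n^{k-2}).
\]

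Next comes the structural step. For $x\in S$, the restricted link $L_x:=L_H(x)$ on $V\setminus S$ should be $\cK_\ell^{k-1}$-free in a robust sense, which lets us apply Theorem~\ref{thm:weak-turan} to get $|L_x|\le t_{k-1}(n-|S|,\ell-1)$. The mechanism is this. Suppose $L_x$ contained a member $F'$ of $\cK_\ell^{k-1}$ with core $L'$. Adding $x$ to each edge of $F'$ gives a $k$-graph in which every pair of $L'\cup\{x\}$ is covered, using at most $\binom{\ell}{2}\le\binom{\ell+1}{2}$ edges. That is a member of $\cK_{\ell+1}^k$ with core $L'\cup\{x\}$, a contradiction.

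That handles the case $|S|=s$. If $|S|<s$, the sum over $S$ is at most $(s-1)t_{k-1}(n,\ell-1)$. This falls short of $s\,t_{k-1}(n-s,\ell-1)$ by $\Omega(n^{k-1})$, which beats the error $ks\eps n^{k-1}$ once $\eps$ is small. So assume $|S|=s$. Then $H$ has at most
\[
s\,t_{k-1}(n-s,\ell-1)+ks\eps n^{k-1}+O(n^{k-2})
\]
edges, and we need to kill the error terms.

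The main obstacle is this last exactness step: showing that there are no edges missing $S$ and no edges meeting $S$ twice, unless the links lose at least as much. I would proceed as follows.
\begin{itemize}
\item Assume $|E(H)|\ge s\,t_{k-1}(n-s,\ell-1)$. By the stability version of Theorem~\ref{thm:weak-turan}, each $L_x$ is close to $T_{k-1}(n-s,\ell-1)$.
\item Since $L_x\cup L_y$ together with $x,y$ would create cores, the near-partitions for different $x\in S$ must essentially coincide. Then every edge of $H$ missing $S$, or meeting $S$ twice, forces many missing link edges.
\item If an edge $e$ avoids $S$, then since $|S|=s$, a matching of size $s$ using $S$ avoids $e$ and extends to $M_{s+1}^k$ by greedy choice; high degrees make this possible. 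So $H-S$ is empty.
\item Similarly, an edge with two vertices $x,y\in S$ spanning a common part yields a $\cK_{\ell+1}^k$ member together with the common near-Turán structure. Edges through several $S$-vertices can be charged against link deficiencies.
\end{itemize}
This gives $|E(H)|\le s\,t_{k-1}(n-s,\ell-1)$, with equality only for $H(n,\ell,s,k)$.
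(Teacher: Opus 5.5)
Your argument is correct in outline, and it takes a different route from anything in the paper. The paper does not prove this cited theorem at all; the closest result is Theorem~\ref{thm:small-s}, which recovers the value $st^*$, where $t^*=t_{k-1}(n-s,\ell-1)$, for all $1\le s<c_0n$, but does not prove uniqueness.

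\textbf{How the paper argues.} It uses fixed degree thresholds instead of your $\eps n^{k-1}$. Let $V_2$ be the vertices of degree at most $ks\binom{n-1}{k-2}$ and $V_0$ those of degree at least $t^*$.
\begin{itemize}
\item The greedy step of Lemma~\ref{lem:budget}(i) gives $|V\setminus V_2|+\nu(F[V_2])\le s$.
\item Edges inside $V_2$ are at most $k^2sh\binom{n-1}{k-2}$, where $h=s-|V\setminus V_2|$. Vertices of middle degree are charged less than $t^*$ each.
\item Inequalities \eqref{eq:21} and \eqref{eq:24} absorb all of this into $st^*$.
\item Lemma~\ref{prop:weak-structure} shows $V_0$ is strongly independent, by the two-link argument you allude to.
\end{itemize}
This budget needs neither stability nor a case split, and it works for $s$ linear in $n$. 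Your version cannot reach that range. Your $\eps$ must depend on $s$, which forces $n\gg s^2$, and the crude bound $(s-1)t_{k-1}(n,\ell-1)$ is too weak there.

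\textbf{What your route buys.} For fixed $s$ your dichotomy is cleaner. If $|S|<s$, you lose $\Omega(n^{k-1})$. If $|S|=s$, augmentation forces every edge to meet $S$. That rigid structure is what yields uniqueness, which the paper's counting does not give.

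\textbf{Two steps to tighten.} Both are easy.

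First, you need neither stability nor any ``charging''. Once every edge meets $S$ and $|E(H)|\ge st^*$, each restricted link has at least $t^*-O(n^{k-2})$ edges, so $|V(L_x)|\ge n-s-O(1)$. The proof of Lemma~\ref{prop:weak-structure} then applies directly. Take a member of $\cK_{\ell-1}^{k-1}$ in $L_x$ on $(V(L_x)\cap V(L_y))\setminus e$, and add $x$, $y$ and the edge $e$. This shows $S$ is strongly independent, so $|E(H)|=\sum_{x\in S}|L_x|\le st^*$.

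Second, saying the partitions ``essentially coincide'' does not give uniqueness. Equality, together with the uniqueness part of Theorem~\ref{thm:weak-turan}, makes every $L_x$ exactly a copy of $T_{k-1}(n-s,\ell-1)$ on $V\setminus S$. Now suppose $L_x$ and $L_y$ had different partitions $\mathcal P\ne\mathcal Q$.
\begin{itemize}
\item Pick $u,v$ in one part of $\mathcal P$ but in different parts of $\mathcal Q$.
\item Add one vertex from each other part of $\mathcal P$, and then add $x$. This gives an $(\ell+1)$-core.
\item The pair $\{u,v\}$ is covered by an edge through $y$, and every other pair by an edge through $x$.
\end{itemize}
This is a member of $\cK_{\ell+1}^k$, a contradiction. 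So all links share one partition, and $H\cong H(n,\ell,s,k)$.
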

	
	\begingroup

	\begin{theorem}[Zhao, Wang and Zhou~\cite{ZhaoWangZhou2026}]
		Let $\ell\ge k\ge3$. There exists $s_0=s_0(k,\ell)$ such that, for every
		fixed integer $s\ge s_0$ and all sufficiently large $n$,
		\[
		\ex_k\bigl(n,\{H_{\ell+1}^k,M_{s+1}^k\}\bigr)
		=s\,t_{k-1}(n-s,\ell-1).
		\]
	\end{theorem}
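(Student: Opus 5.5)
The lower bound comes from the construction $H(n,\ell,s,k)$ described above. It has exactly $s\,t_{k-1}(n-s,\ell-1)$ edges, matching number at most $s$, and is $\cK_{\ell+1}^k$-free, hence $H_{\ell+1}^k$-free. For the upper bound I would run a degree-splitting argument, in the spirit of the proofs for fixed $s$ and large $n$. Let $H$ be an $n$-vertex $\{H_{\ell+1}^k,M_{s+1}^k\}$-free $k$-graph with at least $s\,t_{k-1}(n-s,\ell-1)$ edges. Fix a small $\eps=\eps(k,\ell)>0$ and let
\[
S=\{v: d_H(v)\ge \eps n^{k-1}\}.
\]

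\textbf{Step 1: $|S|\le s$.} Suppose $S$ contains $s+1$ vertices. Each has degree $\Omega(n^{k-1})$, while the edges meeting any set of $O(ks)$ vertices in two or more points number only $O(s^2n^{k-2})$. So for $n$ large we can choose, one at a time, disjoint edges through these $s+1$ vertices, giving an $M^k_{s+1}$, a contradiction.

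\textbf{Step 2: the links of $S$.} Next I would show that for $x\in S$, the link $L_H(x)$ restricted to $V\setminus S$ is ``almost'' $H_\ell^{k-1}$-free. Suppose $L_H(x)$ contains a copy of $H_\ell^{k-1}$ whose core avoids $x$. Adding $x$ to each of its edges covers all pairs between $x$ and the core. The remaining pairs of the new $(\ell+1)$-core are already covered by the $\binom{\ell}{2}$ edges of the copy. The expanded sets are pairwise disjoint automatically, since in $H_\ell^{k-1}$ they are. So we obtain $H_{\ell+1}^k$, a contradiction. Hence $L_H(x)[V\setminus S]$ is $H_\ell^{k-1}$-free.

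Since $\ell-1\ge k-1\ge 2$, Pikhurko's theorem (or Tur\'an's theorem when $k=3$) gives, for large $n$,
\[
|L_H(x)[V\setminus S]|\le t_{k-1}(n-|S|,\ell-1).
\]
Together with Theorem~\ref{thm:weak-turan}-type stability, this bound holds with equality only for the Tur\'an $(k-1)$-graph.

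\textbf{Step 3: $|S|=s$ and every edge meets $S$.} Edges with at least two vertices in $S$ number at most $O(s^2n^{k-2})=o(n^{k-1})$. Now consider the edges missing $S$, which form $H-S$. This subgraph has maximum degree below $\eps n^{k-1}$. Its matching number is at most $s-|S|$: otherwise, by the greedy argument of Step 1, we could extend a matching of $H-S$ by $|S|$ further disjoint edges through the vertices of $S$. A maximum matching of $H-S$ covers at most $k(s-|S|)$ vertices and every edge of $H-S$ meets them, so
\[
e(H-S)\le k(s-|S|)\,\eps n^{k-1}.
\]
Summing up,
\[
e(H)\le |S|\,t_{k-1}(n,\ell-1)+k(s-|S|)\eps n^{k-1}+o(n^{k-1}).
\]
If $|S|\le s-1$, this falls short of $s\,t_{k-1}(n-s,\ell-1)$ by about $t_{k-1}(n,\ell-1)-k\eps n^{k-1}=\Omega(n^{k-1})$, which is a contradiction for $\eps$ small. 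So $|S|=s$, and then $H-S$ has no edges: a single edge of $H-S$ would extend greedily to a matching of size $s+1$.

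\textbf{Step 4: exact count.} Every edge now meets $S$, so
\[
e(H)\le \sum_{x\in S}|L_H(x)[V\setminus S]| + \#\{\text{edges meeting $S$ in at least $2$ vertices}\}.
\]
The first term is at most $s\,t_{k-1}(n-s,\ell-1)$. The main obstacle is the second term: I must show there are no edges with two vertices in $S$, or that each such edge forces a deficit of comparable size in some link.

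For this I would use the hypothesis $s\ge s_0$ together with stability. If an edge $e$ contains $x,y\in S$, each link $L_H(z)[V\setminus S]$ is close to $T_{k-1}(n-s,\ell-1)$. Since $s_0\ge \ell+1$ and all the links are nearly complete $(\ell-1)$-partite, there are many vertices $z\in S$ whose links are nearly identical partitions. Using $e$ together with disjoint link edges of $x$, $y$ and such $z$'s, I would aim to build an $H_{\ell+1}^k$ whose core consists of $x$, $y$, $\ell-3$ vertices from distinct parts, and further vertices from $S$. Since we need at most $\binom{\ell+1}{2}$ disjoint expansions, there is room to choose them disjointly. This would show that every pair of $S$-vertices lying in a common edge costs $\Omega(n^{k-2})$ missing link edges, which outweighs its contribution. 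Hence $e(H)\le s\,t_{k-1}(n-s,\ell-1)$.
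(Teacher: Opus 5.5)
There is a genuine gap, and it is in Step~2.

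\textbf{Step 2 does not produce $H_{\ell+1}^k$.} In $H_{\ell+1}^k$ every edge meets the core in exactly its own pair, and the expansion sets avoid the core. Each lifted edge $\{x\}\cup f$ meets the new core $C\cup\{x\}$ in \emph{three} vertices, so none of them can serve as an expansion edge. The $\ell$ pairs $\{x,c\}$ also get no private edges. What you actually build is only a member of $\cK_{\ell+1}^k$. That is Observation~\ref{obs:weak-link}, which is why the paper uses it for Theorem~\ref{thm:small-s} but not for the expansion.

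\textbf{The claim is false, and Steps 1--3 prove too much.} The full star of all $k$-sets through $x$ is $H_{\ell+1}^k$-free with $\nu=1$, yet its link is complete. More tellingly, take $s\le \ell-1$ and let $H$ consist of all $k$-sets meeting a fixed $s$-set $S$. Then $H$ is $H_{\ell+1}^k$-free. If $j$ core vertices lie in $S$, the pairs of core vertices outside $S$ need $\binom{\ell+1-j}{2}\ge \ell-j>s-j$ distinct $S$-vertices in their expansion sets. Yet $H$ has $\binom nk-\binom{n-s}k>s\,t_{k-1}(n-s,\ell-1)$ edges. Your Steps 1--3 never use $s\ge s_0$, so they would establish $e(H)\le |S|\,t_{k-1}(n,\ell-1)+o(n^{k-1})$ in a situation where it fails. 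The hypothesis $s\ge s_0$ is essential and must enter before the counting.

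\textbf{The missing idea is to work with dense pairs, and this costs you exactness.} Delete every edge containing a pair of codegree at most $k\binom{\ell+1}{2}n^{k-3}$. In the remaining $G$, a member of $\cK_\ell^{k-1}$ in $L_G(x)$ has a core $C$ with all pairs of $C\cup\{x\}$ dense in $H$. Lemma~\ref{obs:extension} then finds disjoint expansions greedily, so $L_G(x)$ is $\cK_\ell^{k-1}$-free. But the deletion removes up to $\Theta(n^{k-1})$ edges overall, the size of a constant number of links. Counting therefore only shows that all but $O_{k,\ell}(1)$ of the high-degree vertices have near-Tur\'an links. It cannot give $|S|=s$ or the exact bound.

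\textbf{How the paper handles this.} Its Theorem~\ref{thm:small-expansion} covers fixed $s$ and large $n$. It uses $s\ge s_0$ to secure $\binom{\ell+1}{2}$ good vertices. From link stability it extracts a common partition $U_1,\dots,U_{\ell-1}$ with sparse-pair control (Lemma~\ref{lem:exp-structure}). It then proves the high-degree set $X$ is strongly independent, which is exactly what your Step~4 needs and only sketches. Finally it obtains the exact count via Lemma~\ref{lem:budget}(ii) and the $\Phi$-maximizing repartition argument showing $\Bad(\mathcal P)\le\Miss(\mathcal P)$ (Lemma~\ref{lem:augmented}).
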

	\endgroup
	We strengthen these results by establishing the same formula for a linear range of $s$.
	
	\begin{theorem}\label{thm:small-s}
		Let $n, s,k,\ell$ be integers such that $\ell\geq k\geq 3$ and $1\le s<\frac{n}{8(k-1)^{k-2}(\ell-1)^3}$. Then
		\[
		\ex_k\bigl(n,\cK_{\ell+1}^k\cup\{M_{s+1}^k\}\bigr)
		=s\,t_{k-1}(n-s,\ell-1).
		\]
	\end{theorem}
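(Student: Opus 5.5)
The lower bound is given by $H(n,\ell,s,k)$, which was shown above to be $\cK_{\ell+1}^k$-free with matching number at most $s$. For the upper bound, let $H$ be an $n$-vertex $k$-graph that is $\cK_{\ell+1}^k\cup\{M_{s+1}^k\}$-free with $|E(H)|> s\,t_{k-1}(n-s,\ell-1)$. We will derive a contradiction. The plan is to split $V(H)$ into a small set $X$ of high-degree vertices and the rest. Fix a maximum matching $M$ in $H$ and let $U=V(M)$; then $|M|\le s$, so $|U|\le ks$, and $U$ meets every edge. Set
\[
X=\{v\in V(H): d_H(v)\ge \Delta\},\qquad \Delta:=\tbinom{n-1}{k-2}\cdot k(\ell-1)s\ \text{(to be tuned)}.
\]
First we show $|X|\le s$. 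If $X$ has $s+1$ vertices, we build a matching greedily. At each step, the vertex $x\in X$ has degree so large that the edges through $x$ meeting the at most $k(s+1)$ already-used vertices number at most $k(s+1)\binom{n-2}{k-2}<\Delta$. Hence an edge through $x$ avoiding them exists, and we obtain $M_{s+1}^k$, a contradiction.

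Next we bound $H-X$. Every edge of $H$ avoiding $X$ meets $U\setminus X$, and each vertex there has degree less than $\Delta$. So the number of edges not meeting $X$ is at most $ks\Delta$. Since $s<n/(8(k-1)^{k-2}(\ell-1)^3)$, this is small compared to $s\,t_{k-1}(n-s,\ell-1)\approx s\frac{(\ell-1)_{k-1}}{(\ell-1)^{k-1}}\binom{n-s}{k-1}\cdot\frac{(k-1)!}{(k-1)!}$. The constant in the hypothesis is exactly what is needed to make this comparison go through, so $\Delta$ must be chosen as roughly $\binom{n}{k-2}$ times a constant in $k$ and $\ell$ times $s$. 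Writing $t=|X|\le s$, the count becomes
\[
|E(H)|\le \sum_{x\in X}|L_{H}(x)\cap\tbinom{V\setminus X}{k-1}| + (\text{edges with }\ge2\text{ vertices in }X)+(\text{edges avoiding }X).
\]
The middle term is at most $\binom{t}{2}\binom{n}{k-2}$.

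The key step is bounding each link $L_x:=L_H(x)$ restricted to $V\setminus X$. We would like $L_x$ to be $\cK_{\ell}^{k-1}$-free, so that Theorem~\ref{thm:weak-turan} gives $|L_x|\le t_{k-1}(n-t,\ell-1)$. Exact freeness fails, so we use the high degree of $x$ instead. Suppose $L_x$ contains a member $F'$ of $\cK_\ell^{k-1}$ with core $L'$ and at most $\binom{\ell}{2}$ edges. Adding $x$ to each edge of $F'$ covers all pairs within $L'$ and all pairs $\{x,y\}$ with $y\in L'$ incident to an edge of $F'$. This gives a configuration whose core $L'\cup\{x\}$ has size $\ell+1$, with at most $\binom{\ell}{2}\le\binom{\ell+1}{2}$ edges. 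If some $y\in L'$ is isolated in $F'$ (possible only when $\ell=2$, which is excluded since $\ell\ge3$), the pair $\{x,y\}$ would need an extra edge; the high degree supplies one. Hence $L_x\cap\binom{V\setminus X}{k-1}$ is $\cK_\ell^{k-1}$-free. For $k-1\ge2$ with $\ell\ge k-1$, Theorem~\ref{thm:weak-turan} (and Turán's theorem when $k-1=2$) gives $|L_x|\le t_{k-1}(n-t,\ell-1)$. Summing,
\[
|E(H)|\le t\,t_{k-1}(n-t,\ell-1)+\tbinom t2\tbinom n{k-2}+ks\Delta .
\]
When $t=s$, we compare this with $s\,t_{k-1}(n-s,\ell-1)$, and a stability refinement is needed to absorb the error terms. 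When $t<s$, the deficit $(s-t)\,t_{k-1}(n-s,\ell-1)=\Theta((s-t)n^{k-1})$ dominates the errors only if these are $O((s-t)n^{k-2}s)$. This is where the $(\ell-1)^3$ and $(k-1)^{k-2}$ factors should enter.

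\textbf{Main obstacle.} The hard case is $t=s$, where the error terms must vanish exactly. Here each $x\in X$ has huge degree, so a maximum matching uses every vertex of $X$. Then there is no edge inside $V\setminus X$: given one, we extend it greedily with edges through each $x\in X$ to get $M_{s+1}^k$. Likewise no edge contains two vertices of $X$, by the same greedy argument using the high degrees. Both error terms therefore vanish, and $|E(H)|\le s\,t_{k-1}(n-s,\ell-1)$. For $t<s$, I would try to induct on $s$ in the style of the Erdős–Gallai argument. Delete a vertex $v\in U\setminus X$ of degree less than $\Delta$ together with its matching edge; this reduces the matching number. We then need to show $\Delta\le t_{k-1}(n-s,\ell-1)-(\text{loss})$, which holds precisely under the stated bound on $s$. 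Making this induction close cleanly is the step I expect to require the most care.
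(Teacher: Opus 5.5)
\textbf{There is a genuine gap, and it sits at the step you call the main obstacle.} In the case $|X|=s$ you assert that no edge contains two vertices of $X$ ``by the same greedy argument''. That is false. Once every edge meets $X$ and $|X|=s$, every matching has at most $s$ edges regardless, so edges containing two vertices of $X$ never contradict $\nu\le s$. For instance, adding such edges to $H(n,\ell,s,k)$ keeps $\nu\le s$. Only $\cK_{\ell+1}^k$-freeness can exclude them, and the paper's argument for this (first half of Lemma~\ref{prop:weak-structure}) needs $d(x)\ge t^*=t_{k-1}(n-s,\ell-1)$. The argument runs as follows.
\begin{itemize}
\item Since $L(x)$ is $\cK_\ell^{k-1}$-free, Theorem~\ref{thm:weak-turan} forces $|V(L(x))|\ge n-s$. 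So two such vertices $x,y$ in a common edge $e$ have link vertex sets sharing at least $n-2s$ vertices.
\item Inequality \eqref{eq:23} then gives a member of $\cK_{\ell-1}^{k-1}$ with core $C$ in $L(x)$ on that common set. When $\ell=k$, any edge suffices.
\item Finally, $C\cup\{x,y\}$ is the core of a member of $\cK_{\ell+1}^k$. The pairs through $y$ are covered because $C\subseteq V(L(y))$, and $\{x,y\}$ is covered by $e$.
\end{itemize}
Your threshold $\Delta$ lies below $t^*$, far below when $s=o(n)$. So vertices of your $X$ need not have almost-spanning links, and this argument is unavailable. The term $\binom t2\binom n{k-2}$ therefore survives and the count does not close. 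Separately, no degree hypothesis is needed for the link step: $L_H(x)$ is exactly $\cK_\ell^{k-1}$-free by Observation~\ref{obs:weak-link}.

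\textbf{The case $t<s$ is also left open.} Your own diagnosis shows why the decomposition is too coarse: neither $\binom t2\binom n{k-2}$ nor $ks\Delta$ is proportional to $s-t$. The proposed induction does not repair this as written, because the matching edge through $v\in U\setminus X$ may contain vertices of $X$ of unbounded degree.

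\textbf{How the paper avoids both problems.} It uses no induction, only a three-level degree split: $V_0=\{v:d(v)\ge t^*\}$, $V_2=\{v:d(v)\le ks\binom{n-1}{k-2}\}$, and $V_1$ the rest.
\begin{itemize}
\item The greedy argument of Lemma~\ref{lem:budget}(i) gives $|V_0|+|V_1|+\nu(F[V_2])\le s$. With $h=s-|V_0|-|V_1|$ one has $|V_2|=n-s+h$.
\item Edges inside $V_2$ are covered by a maximum matching of $F[V_2]$, so there are at most $kh\cdot ks\binom{n-1}{k-2}$ of them. This is proportional to the slack $h$, unlike your bound, which covers via all of $U$.
\item Middle-degree vertices are simply charged less than $t^*$ each and need no structure.
\item Only $V_0$ must be strongly independent, via the argument above. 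After that, each $x\in V_0$ contributes at most $t_{k-1}(n-s+h,\ell-1)\le t^*+h\binom{n-1}{k-2}$ by \eqref{eq:24}.
\item Inequality \eqref{eq:21} absorbs every $h$-term into $ht^*$.
\end{itemize}
No stability argument or induction is required.
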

	
	\begin{theorem}\label{thm:small-expansion}
		Let $n,s, k,\ell$ be integers such that $\ell\geq k\geq 3$. There are constants
		$s_0=s_0(k,\ell)$ and $n_0=n_0(k,\ell)$ such that for
		$s_0\le s<\frac{n}{8(k-1)^{k-2}(\ell-1)^3}$ and $n\ge n_0$,
		\[
		\ex_k\bigl(n,\{H_{\ell+1}^k,M_{s+1}^k\}\bigr)
		=s\,t_{k-1}(n-s,\ell-1).
		\]
	\end{theorem}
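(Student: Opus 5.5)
The lower bound is immediate from the construction $H(n,\ell,s,k)$. It has $s\,t_{k-1}(n-s,\ell-1)$ edges, it is $\cK_{\ell+1}^k$-free and hence $H_{\ell+1}^k$-free, and its matching number is at most $s$.

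For the upper bound I would reduce to Theorem~\ref{thm:small-s}. Let $H$ be an $n$-vertex $\{H_{\ell+1}^k,M_{s+1}^k\}$-free $k$-graph with at least $s\,t_{k-1}(n-s,\ell-1)$ edges. The aim is to show that $H$ is in fact $\cK_{\ell+1}^k$-free, or else to derive a contradiction.

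The first step uses the matching structure. Take a maximum matching $M$, with $|M|\le s$, and let $U=V(M)$, so $|U|\le ks$ and every edge meets $U$. Call a vertex $v\in U$ \emph{heavy} if $d_H(v)\ge c\binom{n}{k-1}$ for a small constant $c$, and let $W$ be the set of heavy vertices.
\begin{itemize}
\item Each edge of $M$ contains at most one heavy vertex. Otherwise the two heavy vertices could be rematched disjointly using their large links, which avoid the $O(ks)$ vertices of $U$ (recall $s\le n/(8(k-1)^{k-2}(\ell-1)^3)$), giving a larger matching.
\item Hence $|W|\le s$.
\item Edges meeting $U$ only in light vertices number at most about $ks\cdot c\binom{n}{k-1}$, which is small compared with $s\,t_{k-1}(n-s,\ell-1)\approx s\frac{(\ell-1)_{k-1}}{(\ell-1)^{k-1}}\binom{n}{k-1}$.
\item It follows that $|W|=s$ and that almost every edge contains exactly one vertex of $W$.
\end{itemize}

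The second step analyses the links $L_H(w)$ for $w\in W$, restricted to $V\setminus W$. Each such link must be $\cK_{\ell}^{k-1}$-like free in the following robust sense. Suppose $L_H(w)$ contained many copies of the $(k-1)$-expansion of $K_\ell$. Then, using $s\ge s_0$ heavy vertices, a pigeonhole argument over the links would produce a copy of $H_{\ell+1}^k$. A single heavy $w$ with a $K_\ell$-expansion in its link already gives $H_{\ell+1}^k$: take $w$ together with the core, expand the pairs $\{w,x\}$ through the link using fresh vertices, and note that the large degree ensures enough disjoint extensions. By the hypergraph removal lemma or supersaturation, the link is then close to $\cK_{\ell}^{k-1}$-free. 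Its size is therefore at most $t_{k-1}(n-s,\ell-1)+o(n^{k-1})$ by Theorem~\ref{thm:weak-turan}. Stability then makes each link close to $T_{k-1}(n-s,\ell-1)$.

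The third step is exactness. Since $s\ge s_0$ is large, pigeonhole shows that many heavy vertices share nearly the same $(\ell-1)$-partition. I would then do a cleaning argument:
\begin{itemize}
\item Every edge of $H$ inside a part, or containing two vertices of $W$, or missing $W$, either creates $H_{\ell+1}^k$ with the nearly complete structure, or forces a loss of $\Omega(n^{k-2})$ link edges elsewhere.
\item Comparing counts yields $|H|\le s\,t_{k-1}(n-s,\ell-1)$.
\item Equality forces $H$ to be $\cK_{\ell+1}^k$-free, which lets us conclude via Theorem~\ref{thm:small-s} as well.
\end{itemize}

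The main obstacle is this last exact step. One must handle bad edges uniformly when $s$ is linear in $n$, so that the error terms scale with $s$ rather than being absorbed by $n\gg s$. I would first try charging each bad edge to $\Omega(s)$ missing cross edges via the embedding of $H_{\ell+1}^k$.
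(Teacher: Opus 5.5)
Your lower bound is fine, but the upper bound has genuine gaps, located where the paper does its real work.

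\textbf{The link step.} Your claim that a single heavy $w$ whose link contains a copy of $H_\ell^{k-1}$ already yields $H_{\ell+1}^k$ is false. A full star centred at $w$ has every $H_\ell^{k-1}$ in its link, yet contains no $H_{\ell+1}^k$, because any two of its edges meet. The edges coming from $L_H(w)$ all pass through $w$. So the pairs inside the core $C$ still need covering edges that avoid $w$ and have private vertices. Neither large degree nor supersaturation or removal inside the link supplies them.

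The missing idea is the sparse/dense device. Delete every edge containing a pair of codegree at most $k\binom{\ell+1}{2}n^{k-3}$; this costs only $O(n^{k-1})$ edges. In the remaining $G$, a member of $\cK_\ell^{k-1}$ in $L_G(y)$ gives an $(\ell+1)$-set all of whose pairs are dense in $H$, and Lemma~\ref{obs:extension} expands it greedily to $H_{\ell+1}^k$. The same device makes the high-degree set $X$ strongly independent. It also produces one partition $U_1,\dots,U_{\ell-1}$ compatible with every $x\in X$: each edge meets each $U_i$ at most once, and each $x\in X$ has few sparse neighbours in each $U_i$ (an estimate that uses $s<c_0n$). ``Pigeonhole shows many heavy vertices share nearly the same partition'' gives neither statement, and the final count needs all of them to be compatible, not just many.

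\textbf{The exact step.} This is missing, and your first step does not give $|W|=s$. Without an upper bound of about $t^*=t_{k-1}(n-s,\ell-1)$ on heavy degrees it only gives $|W|=\Omega(s)$. Even with such a bound, your light-edge term $ks\cdot c\binom{n}{k-1}$ is of the same order as $st^*$ and does not shrink with the deficit $s-|W|$. The paper itself only gets $|X|\ge s-k\binom{\ell+1}{2}/\eta$.

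Exactness comes from Lemma~\ref{lem:budget}. Vertices of degree at most $ks\binom{n-1}{k-2}$ span a matching of size at most $h=s-|V\setminus V_2|$, hence at most $k^2sh\binom{n-1}{k-2}$ edges. Even with the gain $|X|h\binom{n-1}{k-2}$ of the links of $X$ on the $h$ extra vertices, this stays below $ht^*$ by \eqref{eq:21}. That reduces everything to $\sum_{x\in X}d_{H[X\cup V_2]}(x)\le|X|\,t_{k-1}(|V_2|,\ell-1)$.

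The paper proves this by taking the partition of $V_2$ extending the $U_i$ that maximizes $\Phi$, and showing $\Bad\le\Miss$:
\begin{itemize}
\item Every bad $(k-1)$-set has two vertices in one part, one of them an exceptional $z\in R$ that is sparse to about half of $X$ or of some $U_j$ (otherwise $H_{\ell+1}^k$ appears).
\item Such a $z$ therefore forces $\Omega(|X|n^{k-2})$ missing sets.
\item If some $z$ carried more bad sets than that, moving $z$ to another part would increase $\Phi$.
\end{itemize}
Your plan to charge each bad edge to $\Omega(s)$ missing edges has no such mechanism, since all bad edges through one exceptional vertex compete for the same missing edges.

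Finally, the detour through Theorem~\ref{thm:small-s} is redundant. Once counting gives $|E(H)|\le st^*$ you are done, and $\cK_{\ell+1}^k$-freeness of $H$ is neither needed nor established.
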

	
	The remainder of the paper is organized as follows. Section~2 collects the preliminary results. In Section~3, we prove Theorems~\ref{thm:intro-graph} and~\ref{thm:intro-hypergraph}, while Section~4 is devoted to the proof of Theorem~\ref{thm:intro-boundary}. Section~5 contains the proofs of Theorems~\ref{thm:small-s} and~\ref{thm:small-expansion}. Section~6 presents some concluding remarks, and the proofs of the technical numerical lemmas are deferred to the Appendix.

	\section{Preliminaries}
	Let $H$ be a  $k$-graph and  $S\subseteq V(H)$. We call  $S$ \emph{strongly independent} if no two vertices in $S$ lie in a common edge of $H$ and  \emph{weakly independent} if $S$ contains no edge of $H$.
	We use $H-S$ to denote the hypergraph obtained from $H$ by deleting $S$ and all edges of $H$ intersecting set $S$, and use $H[S]$ to denote the sub-hypergraph with vertex set $S$ and edge set $\{e\in E(H): e\subseteq S\}$. For   $E'\subseteq E(H)$, we use $H-E'$ to denote the hypergraph obtained from $H$ by deleting $E'$. The size
	of the largest matching in $H$ is denoted by $\nu(H)$. A matching of $H$ is perfect
	if it covers all vertices of $H$.
	Given two $k$-graphs $H_1$ and $H_2$ of order $n$, we say that $H_1$ is \emph{$\varepsilon$-close} to $H_2$ if $H_1$ can be transformed to $H_2$ by adding and deleting at most $\varepsilon n^k$ edges.
	
	The classical stability theorem was proved independently by
	Erd\H{o}s and Simonovits~\cite{Erdos1968Stability,Simonovits1968}.
	
	\begin{theorem}[\chg{Erd\H{o}s-Simonovits
			\cite{Erdos1968Stability,Simonovits1968}}]\label{erdos-stability}
		Let $\ell\ge2$ and let $\cF$ be a finite family of graphs with
		$\chi(\cF)=\ell+1$. For every $\eps>0$, there exist $\delta>0$ and $N_0$ such
		that every $\cF$-free graph on $n\ge N_0$ vertices with at least
		$(1-1/\ell)\binom n2-\delta n^2$ edges is $\eps$-close to $T_2(n,\ell)$.
	\end{theorem}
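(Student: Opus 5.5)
The plan is to reduce to the case of a single clique, using supersaturation and the graph removal lemma, and then to prove a stability statement for $K_{\ell+1}$ directly. Fix $F\in\cF$ with $\chi(F)=\ell+1$, and put $t=|V(F)|$. Then $F$ is a subgraph of the blow-up $K_{\ell+1}(t)$, in which each vertex of $K_{\ell+1}$ is replaced by $t$ independent vertices. Hence every $\cF$-free graph $G$ is $K_{\ell+1}(t)$-free. Throughout, constants are chosen in the order $\eps\gg\delta$ and $N_0$ large.

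\textbf{Step 1 (few cliques).} I claim that a $K_{\ell+1}(t)$-free graph on $n$ vertices has only $o(n^{\ell+1})$ copies of $K_{\ell+1}$. Consider the $(\ell+1)$-uniform hypergraph whose edges are the $(\ell+1)$-cliques of $G$. By Erd\H{o}s' theorem on complete $(\ell+1)$-partite hypergraphs, any $(\ell+1)$-graph with $cn^{\ell+1}$ edges and $n$ large contains $K^{(\ell+1)}(t,\dots,t)$. Such a copy is exactly a $K_{\ell+1}(t)$ in $G$, a contradiction. So for any $\gamma>0$ and large $n$, $G$ has fewer than $\gamma n^{\ell+1}$ copies of $K_{\ell+1}$.

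\textbf{Step 2 (removal).} By the graph removal lemma, for every $\eta>0$ there is $\gamma>0$ such that we may delete at most $\eta n^2$ edges of $G$ to obtain a $K_{\ell+1}$-free graph $G'$. This $G'$ has at least $t(n,\ell)-(\delta+\eta+o(1))n^2$ edges.

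\textbf{Step 3 (clique stability, the main step).} I will prove F\"uredi's form of stability: if $G'$ is $K_{\ell+1}$-free with $e(G')\ge t(n,\ell)-m$, then deleting at most $m$ edges makes $G'$ $\ell$-partite. I would argue by induction on $\ell$, with the case $\ell=1$ trivial. Take a vertex $v$ of maximum degree $\Delta$ and let $A=N(v)$, $B=V\setminus A$. Then $G'[A]$ is $K_\ell$-free. We also have $e(G')\le e(G'[A])+\sum_{u\in B}d(u)\le e(G'[A])+(n-\Delta)\Delta$. From this, the number $e(G'[B])+e(G'[A],B)$ of edges touching $B$, together with the deficit of $G'[A]$ from $t(\Delta,\ell-1)$, is controlled by $m$. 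The bookkeeping inequality to aim for is $t(n,\ell)\ge t(\Delta,\ell-1)+\Delta(n-\Delta)$. Applying induction to $G'[A]$ and taking $B$ as the $\ell$-th part, the edges we delete are those inside $B$ plus those removed inside $A$, and the total is at most $m$. This is the step I expect to be most delicate: the counting must charge the edges inside $B$ against the degree sum correctly. I would try first the identity $\sum_{u\in B}d(u)=2e(B)+e(A,B)$ to absorb $e(B)$.

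\textbf{Step 4 (balancing).} Step 3 produces an $\ell$-partite graph with at least $t(n,\ell)-O(\delta+\eta)n^2$ edges. An $\ell$-partite graph with parts of sizes $a_i$ has at most $\sum_{i<j}a_ia_j$ edges. Since $\sum_{i<j}a_ia_j=\tfrac12\bigl(n^2-\sum a_i^2\bigr)$, near-maximality forces $\sum_i(a_i-n/\ell)^2=O(\delta+\eta)n^2$, so every part has size $n/\ell\pm O(\sqrt{\delta+\eta})n$. Moving few vertices turns the partition into a balanced one. The graph then differs from $T_2(n,\ell)$ in $O(\sqrt{\delta+\eta})n^2$ pairs, namely the missing cross edges plus the pairs changed by moving vertices. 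Choosing $\eta$ and $\delta$ small relative to $\eps$, and then $N_0$ large, gives that $G$ is $\eps$-close to $T_2(n,\ell)$.
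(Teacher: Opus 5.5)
The paper gives no proof of this statement. It quotes the theorem from Erd\H{o}s and Simonovits and uses it as a black box, only in the proof of Theorem~\ref{thm:intro-graph}, so there is no argument in the paper to match yours against.

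Your argument is correct. It is essentially F\"uredi's 2015 derivation of Simonovits' stability from the removal lemma:
\begin{enumerate}[label=(\arabic*)]
\item Erd\H{o}s' theorem on complete $(\ell+1)$-partite $(\ell+1)$-graphs gives $o(n^{\ell+1})$ copies of $K_{\ell+1}$.
\item The removal lemma then makes $G$ $K_{\ell+1}$-free at a cost of $\eta n^2$ edges.
\item The maximum-degree induction gives exact clique stability.
\end{enumerate}

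Your Step 3 closes exactly as you suspect. From $\sum_{u\in B}d(u)=2e(B)+e(A,B)\le\Delta(n-\Delta)$ you get $e(G')+e(B)\le e(G'[A])+\Delta(n-\Delta)$. Hence $e(B)+\bigl(t(\Delta,\ell-1)-e(G'[A])\bigr)\le t(n,\ell)-e(G')\le m$. Induction applied to the $K_\ell$-free graph $G'[A]$ then finishes, since its deficit $t(\Delta,\ell-1)-e(G'[A])$ is nonnegative by Tur\'an. Note that it is $e(B)$, not the number of edges touching $B$, that gets charged to $m$; your sentence about ``edges touching $B$ being controlled by $m$'' should be corrected accordingly.

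In Step 4 you should also count the edges of $G$ discarded in Steps 2--3 in the symmetric difference with the balanced Tur\'an graph. There are at most $\eta n^2+m$ of them, so this is harmless.

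Compared with the classical proofs, your route is different in kind. The classical proofs predate the regularity lemma. They work directly with a large $K_\ell(t)$ found via Erd\H{o}s--Stone, typically in a subgraph of large minimum degree, and sort the remaining vertices by their neighbourhoods in it. Your route is shorter and more modular, and its clique step is exact with no error term. The price is that the removal lemma makes $\delta$ depend on $\eps$ only with tower-type bounds. Since the paper needs only the existence of $\delta$ and $N_0$, that loss is irrelevant here.
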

	
	
	
	Pikhurko~\cite{Pikhurko} proved the following stability result for $H_{\ell+1}^k$.
	\begin{theorem}[Pikhurko~\cite{Pikhurko}]\label{thm:exp-stability}
		For any $\ell\geq k\geq 3$ and any $\varepsilon>0$, there are $\delta>0$ and $n_0=n_0(k,\ell,\varepsilon)$ such that
		the following holds for all $n\ge n_0$. If a $k$-graph $H$ on $n$
		vertices is $H_{\ell+1}^k$-free and has at least
		$t_k(n,\ell)-\delta n^k$ edges, then $H$ is $\varepsilon$-close to $T_k(n,\ell)$.
	\end{theorem}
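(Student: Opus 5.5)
The statement is Pikhurko's stability theorem, which is quoted rather than proved here, but a proof can be built from the graph stability theorem (Theorem~\ref{erdos-stability}). The plan is to clean $H$, show that the $2$-shadow of the cleaned hypergraph is $K_{\ell+1}$-free, and then transfer stability from that shadow graph back to $H$.

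\emph{Step 1: cleaning.} Fix a small $c>0$, to be chosen in terms of $\varepsilon$ and $\delta$. Repeatedly pick a pair $\{x,y\}$ with $0<d_{H'}(\{x,y\})<c\,n^{k-2}$ in the current hypergraph $H'$ and delete all edges containing it. Each pair is treated at most once, so at most $\binom n2 c\,n^{k-2}\le c\,n^k$ edges are removed. The resulting $H'$ satisfies $|H'|\ge t_k(n,\ell)-(\delta+c)n^k$, and every pair covered by $H'$ has codegree at least $c\,n^{k-2}$.

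\emph{Step 2: the shadow is $K_{\ell+1}$-free.} Let $G$ be the $2$-shadow of $H'$, i.e.\ the graph of pairs covered by an edge of $H'$. Suppose $G$ contains a copy of $K_{\ell+1}$ on a set $L$. Process its $\binom{\ell+1}{2}$ pairs one at a time, and for each pair choose an edge of $H'$ containing it whose other $k-2$ vertices avoid all vertices used so far. At each stage at most $C=(\ell+1)+(k-2)\binom{\ell+1}2$ vertices are forbidden. The number of edges through a fixed pair that also meet a forbidden vertex is at most $C n^{k-3}$, which is less than $c\,n^{k-2}$ for $n$ large. Hence every pair has a valid choice, and the chosen edges form a copy of $H_{\ell+1}^k$ in $H$, a contradiction. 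So $G$ is $K_{\ell+1}$-free, and $H'$ is contained in the $k$-graph of $k$-cliques of $G$. In particular $|H'|\le k_k(G)$, the number of $k$-cliques of $G$.

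\emph{Step 3: clique-count stability, the main obstacle.} We must show that a $K_{\ell+1}$-free graph $G$ with $k_k(G)\ge t_k(n,\ell)-\delta' n^k$ is $\varepsilon'$-close to $T(n,\ell)$. My first attempt is to pass through the edge count. The Khadzhiivanov--Sós--Straus inequality for $K_{\ell+1}$-free graphs, proved by the Motzkin--Straus Lagrangian argument, gives
\[
k_k(G)\le \binom{\ell}{k}\Bigl(\frac{2e(G)}{(1-1/\ell)n^2}\Bigr)^{k/2}\Bigl(\frac n\ell\Bigr)^k .
\]
Since $t_k(n,\ell)=\binom{\ell}{k}(n/\ell)^k+O(n^{k-1})$, this forces $e(G)\ge(1-1/\ell)\binom n2-\delta''n^2$ with $\delta''\to0$ as $\delta'\to0$. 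Theorem~\ref{erdos-stability}, applied with $\mathcal F=\{K_{\ell+1}\}$, then gives that $G$ is $\varepsilon'$-close to $T(n,\ell)$. If that inequality is not available in the needed form, the fallback is a Zykov-symmetrization argument with a stability analysis.

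\emph{Step 4: transfer back to $H$.} Changing $\varepsilon' n^2$ edges of $G$ changes the set of its $k$-cliques by at most $\varepsilon' n^k$. Hence the clique hypergraph of $G$ differs from $T_k(n,\ell)$ in $O(\varepsilon' n^k)$ edges, after relabeling the vertices so that $G$ is compared with $T(n,\ell)$ on the same vertex set. The hypergraph $H'$ is a subhypergraph of this clique hypergraph, missing at most $k_k(G)-|H'|\le t_k(n,\ell)+\varepsilon' n^k-|H'|=O((\delta+c+\varepsilon')n^k)$ of its edges. Finally $H$ differs from $H'$ by at most $c\,n^k$ edges. Choosing $c,\delta,\varepsilon'$ small relative to $\varepsilon$ shows that $H$ is $\varepsilon$-close to $T_k(n,\ell)$.
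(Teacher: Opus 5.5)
Your proof is correct. The paper gives no proof of this statement: it quotes it as a black box attributed to Pikhurko and only applies it. So the comparison is between a citation and your self-contained derivation.

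\textbf{How your argument fits the paper.} Your Steps 1--2 are the same codegree-cleaning device the paper uses elsewhere: sparse versus dense pairs, and the greedy embedding of Lemma~\ref{obs:extension}. They reduce the problem, at a cost of $c\,n^k$ edges, to a $k$-graph $H'$ whose $2$-shadow is $K_{\ell+1}$-free, i.e.\ to a $\cK_{\ell+1}^k$-free $k$-graph. Steps 3--4 then prove stability for that larger family, using the Khadzhiivanov--S\'os--Straus bound $k_k(G)\le\binom{\ell}{k}\bigl(e(G)/\binom{\ell}{2}\bigr)^{k/2}$ together with Theorem~\ref{erdos-stability}.

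\textbf{The parameters close up.} Take $\varepsilon'=\varepsilon/4$, and let $\delta''$ be the constant Theorem~\ref{erdos-stability} supplies for $\varepsilon'$. Choose $\delta'$ so that $k_k(G)\ge t_k(n,\ell)-\delta'n^k$ forces $e(G)\ge(1-1/\ell)\binom n2-\delta''n^2$. Finally set $c=\delta\le\min\{\delta'/2,\varepsilon/8\}$. Then the total symmetric difference is at most $(2\varepsilon'+\delta+2c)n^k\le\varepsilon n^k$ once $n$ is large (in particular $n>C/c$ for the greedy step).

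\textbf{What each route buys.} Your route makes the dependence of $\delta$ on $\varepsilon$ explicit and relies only on two classical graph theorems. As a by-product it proves the stronger stability statement for all $\cK_{\ell+1}^k$-free $k$-graphs, which is the stability counterpart of Theorem~\ref{thm:weak-turan}. The citation buys brevity.

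\textbf{One minor point in Step 3.} If your source states the clique inequality with the clique number $\omega$ in place of $\ell$, replacing $\omega$ by $\ell\ge\omega$ is harmless. The quantity $\binom{r}{k}\binom{r}{2}^{-k/2}$ is nondecreasing in $r\ge k$, so the bound only weakens, and $k_k(G)=0$ when $k>\omega$.
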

	
	A $k$-graph $G$
	is $\ell$-partite if its vertex set has a partition into
	\rev{$\ell$ classes $V_1,V_2,\ldots,V_\ell$} such that
	$|e\cap V_i|\le1$ for every edge $e$. Given such a partition, a set
	$T\subseteq V(G)$ is \emph{legal} if $|T\cap V_i|\le1$ for every $i\in [\ell]$.
	The following lemma shows that a nearly balanced $\ell$-partite $k$-graph
	with large minimum degree has a near-perfect matching. It will be used in Section 3 to prove Lemma \ref{lem:close-reduction}.
	
	\begin{lemma}\label{lem:partite-matching}
		For every $\ell>k\ge2$, there is $\eps_0=\eps_0(k,\ell)>0$ such that
		the following holds for all sufficiently large $n$. Let
		$0<\delta\le\eps\le\eps_0$, and let $G$ be an $\ell$-partite $k$-graph with
		vertex set $V_1\cup\cdots\cup V_\ell$ such that
		$(1+\delta)n\ge |V_\ell|\ge\cdots\ge|V_1|\ge n$.
		If
		$d_G(v)>(\binom{\ell-1}{k-1}-\eps) n^{k-1}$ for every $v\in V(G)$,
		then $G$ has a matching covering all but at most $k-1$ vertices.
	\end{lemma}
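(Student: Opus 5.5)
The plan is a greedy matching that leaves only a bounded number of vertices uncovered, followed by an absorption step that uses the degree condition to shrink the leftover to fewer than $k$ vertices. Throughout, $N=|V(G)|\in[\ell n,\ell(1+\delta)n]$.

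\textbf{Structure forced by the degree condition.} The maximum possible degree of $v\in V_i$ is the number of legal $(k-1)$-sets avoiding $V_i$. This is at most $\binom{\ell-1}{k-1}((1+\delta)n)^{k-1}\le(\binom{\ell-1}{k-1}+O(\delta))n^{k-1}$. So for each $v$, all but $O(\eps)n^{k-1}$ legal $(k-1)$-sets in the other parts are edges together with $v$. Consequently, for any fixed legal $(k-1)$-set pattern (a choice of $k-1$ parts other than $v$'s), almost every legal set with that pattern lies in the link of $v$.

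\textbf{Step 1: balanced almost-perfect matching.} Fix the part sizes $|V_i|=n_i$. Choose integers $x_T\ge0$, one for each $k$-subset $T$ of $[\ell]$, with $\sum_{T\ni i}x_T=n_i-r_i$, where the remainders satisfy $0\le r_i$ and $\sum r_i\le k-1$. Such a fractional decomposition exists because $\ell>k$ and the $n_i$ are within factor $1+\delta$ of each other. Concretely, start with the uniform weighting $x_T\approx n/\binom{\ell-1}{k-1}$ and correct the $O(\delta n)$ discrepancies. This is where $\ell>k$ is used: the $k$-sets of $[\ell]$ can adjust individual part loads. Next, greedily build edges of type $T$, $x_T$ of each, using random or greedy selection. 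While at least $c n$ vertices remain in each relevant part, the near-completeness of links guarantees a new edge of the required type. This covers all but $O(\sqrt{\eps})n$ vertices, with the leftover still distributed proportionally to the $r$-corrected quotas.

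\textbf{Step 2: absorption to finish.} The leftover set $R$ has the right part-counts for a perfect cover up to $k-1$ vertices, but its induced subgraph may lack edges. For each leftover legal $k$-set $S$ of type $T$, use switchings. Pick $k$ matching edges of type $T$ and re-partition their $k^2$ vertices together with $S$ into $k+1$ edges of type $T$. Because each vertex's link misses only $O(\eps)n^{k-1}$ sets, and because vertices used earlier can be avoided, a counting argument shows such a switching exists for each $S$. Process the $O(\sqrt{\eps})n$ leftover $k$-sets sequentially, each time using fresh matching edges. There are $\Theta(n)$ of them, so the process never runs out.

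\textbf{Main obstacle.} The hard part is the switching step: showing that the $k+1$ new $k$-sets are all edges. For each vertex $u$ of $S$ we need some $(k-1)$-set from the chosen matching edges to lie in the link of $u$. I would count. Among the $\Theta(n^k)$ choices of $k$-tuples of matching edges, the number failing for a given $u$ is $O(\eps)$ times the total, using the fact that link non-edges number at most $\eps n^{k-1}$. A union bound over the $2k$ involved vertices then leaves valid choices when $\eps_0$ is small in terms of $k$ and $\ell$.
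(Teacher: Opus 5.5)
Your argument is sound in outline but takes a different route from the paper's. The paper uses no type quotas and no absorption phase. It passes to an edge-maximal counterexample with $\nu(G)=\lfloor |V(G)|/k\rfloor-1$, takes a maximum matching $M$ maximizing the number of parts that contain uncovered vertices, and uses pigeonhole to find one type $J$ carrying more than $n/\binom{\ell}{k}$ edges of $M$. It then proves a single switching claim: $k-1$ edges of type $J$ plus a disjoint legal $k$-set are re-partitioned into $k$ edges along cyclic transversals, found by the same union bound over missing link sets that you propose. This claim is used twice. First it shows the uncovered vertices meet at most $k-1$ parts, since otherwise $M$ augments. Then it rotates an edge $e_0$ avoiding a part $V_t$ with two uncovered vertices, so that a previously fully covered part gains an uncovered vertex, contradicting the extremal choice; $\ell>k$ enters through $|M|\ge \ell n/k-2>|V_t|$.

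You instead balance up front. Your quota system is solvable with every $x_T=\Theta(n)$, because for $\ell>k$ the indicator vectors of $A\cup\{i\}$ and $A\cup\{j\}$, with $A$ a $(k-1)$-subset of $[\ell]\setminus\{i,j\}$, differ by a unit shift between parts $i$ and $j$. This makes the greedy leftover exactly decomposable into legal $k$-sets of prescribed types plus $\sum_i r_i\le k-1$ vertices, and each leftover set is absorbed within its own type. What this buys is a constructive switching that always stays inside one type. The paper's claim is written for a $k$-set of type $J$ but is applied to uncovered sets of arbitrary type, which tacitly needs a bijection from $T$ onto the parts of $J$ fixing the part of each vertex that lies in $J$.

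The cost is bookkeeping you should make explicit:
\begin{itemize}
\item Each type must keep $\Theta(n)$ fresh edges. This holds because the remaining supply in every part of $T$ is at least the remaining quota $x_T-b_T$, so greedy reaches $b_T\ge x_T-cn$ in any order.
\item The simple greedy leaves $O(\varepsilon^{1/(k-1)})n$ vertices, not $O(\sqrt{\varepsilon})n$, once $k\ge 4$. This is harmless.
\item You need an explicit re-partition pattern. In your version the $(k+1)$-st new edge contains no vertex of $S$, so its failures must be counted by anchoring at a vertex of one of the chosen edges. The simplest fix is to leave one of the $k$ edges untouched. That turns your switching into the paper's $(k-1)$-edge cyclic switching and leaves only the $k$ vertices of $S$ in the union bound.
\end{itemize}
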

	
	\begin{proof}
		Let $K$ be the complete $\ell$-partite $k$-graph with parts $V_1, \cdots, V_\ell$. Then $G$ is a subgraph of $K$, and for every vertex $v$ of $K$, $d_K(v)\le\binom{\ell-1}{k-1}(1+\delta)^{k-1}n^{k-1}$. Since $(1+\delta)^{k-1}-1\le(2^{k-1}-1)\delta$ and $\delta\le\eps\le1$, we have
		\begin{equation}\label{eq:missing-link}
			|N_K(v)\setminus N_G(v)|
			\le 2^{k-1}\binom{\ell-1}{k-1}\eps n^{k-1}.
		\end{equation}

		Suppose, to the contrary, that $\nu(G)<\lfloor\frac{V(G)}{k}\rfloor$. Since $K$ has a matching of size $\lfloor\frac{\ell n}{k}\rfloor$,  we may assume that $\nu(G)\geq \lfloor\frac{ (k+1)n}{k}\rfloor> (1+\delta)n$ by adding edges if necessary.
		Choose a maximum matching $M$ of $G$ for which the number of parts containing
		uncovered vertices is as large as possible. Since $|M|> n$, there are $k$ parts such that $|M\cap E(K[V_{j_1}\cup\cdots\cup V_{j_k}])|> \frac{n}{\binom\ell k}$
		for all sufficiently large $n$. Let $M'=M\cap E(K[V_{j_1}\cup\cdots\cup V_{j_k}])$ and $a=1/\binom{\ell}{k}$.
		
		\vspace{0.15cm}	
		{\bf Claim.}
		If $M^*\subseteq M'$ and $|M^*|\ge \frac{an}2$, then for every legal $k$-set
		$T$ disjoint from $V(M^*)$, there is a subset $M_0\subseteq M^*$ consisting of $k-1$ edges such
		that $G[V(M_0)\cup T]$ has a perfect matching of size $k$.
		\vspace{0.15cm}	
		
		We now prove this claim. Write
		$T=\{v_{j_1},\ldots,v_{j_k}\}$, where
		$v_{j_i}\in V_{j_i}$, $1\leq i\leq k$. For
		$M''\in\binom{M^*}{k-1}$, let
		\[
		\mathcal T(M'')
		=
		\left\{
		S\subseteq V(M''):
		|S\cap e|=1\text{ for every }e\in M''
		\right\}.
		\]
		We say that $M''$ is a \emph{$(k-1)$-neighbour} of
		$v_{j_i}$ if
		$\{v_{j_i}\}\cup S\in E(G)$
		for every legal $S\in\mathcal T(M'')$ with
		$S\cap V_{j_i}=\varnothing$. Denote the family of all
		$(k-1)$-neighbours of $v_{j_i}$ by $\mathcal M_{j_i}$.
		If $M''\in\binom{M^*}{k-1}\setminus\mathcal M_{j_i}$, then there is a
		legal transversal $S\in\mathcal T(M'')$ such that
		$S\in N_K(v_{j_i})\setminus N_G(v_{j_i})$.
		Such  transversal $S$ uniquely determines
		$M''$, since $M^*$ is a matching and $M''$ consists precisely of the
		edges of $M^*$ meeting $S$. By
		\eqref{eq:missing-link},
		$\left|
		\binom{M^*}{k-1}\setminus\mathcal M_{j_i}
		\right|
		\le 2^{k-1}\binom{\ell-1}{k-1}\eps n^{k-1}$.
		Consequently,
		$|\mathcal M_{j_i}|
		\ge
		\binom{|M^*|}{k-1}-2^{k-1}\binom{\ell-1}{k-1}\eps n^{k-1}$.
		
		Choose $M''\in\binom{M^*}{k-1}$ uniformly at random. Since $|M^*|\ge \frac{an}2$, $n$ is sufficiently large  and $\eps$  is sufficiently small, by the union bound,
		\begin{equation}
			\begin{aligned}
				\mathbb P\left(M''\in\bigcap_{i=1}^{k}\mathcal M_{j_i}\right)
				\ge 1-\sum_{i=1}^{k}\mathbb P(M''\notin\mathcal M_{j_i})
				\ge 1-\frac{k2^{k-1}\binom{\ell-1}{k-1}\eps n^{k-1}}{\binom{|M^*|}{k-1}}>0.
			\end{aligned}
		\end{equation}
		Thus there exists
		an	$M_0\in\bigcap_{i=1}^{k}\mathcal M_{j_i}$.
		Write
		$M_0=\{e_1,\ldots,e_{k-1}\}$ and
		$e_t=\{x_{t,1},\ldots,x_{t,k}\}$, where
		$x_{t,j}\in V_j$. Since $T$ is legal, there is a bijection
		$\pi:T\to[k]$ such that $\pi(u)=j$ whenever
		$u\in V_j$ and $j\in[k]$. For each $i\in[k]$, define
		\[
		f_i=
		\{x_{1,i+1},x_{2,i+2},\ldots,x_{k-1,i+k-1}\},
		\]
		where the second subscripts are modulo $k$.
		Then $\{f_1,\ldots,f_k\}$ is a partition of $V(M_0)$ and $f_i\cap V_i=\varnothing$ for every
		$i\in[k]$. In particular, $f_{\pi(u)}$ avoids the part containing $u$.
		For every $u\in T$, since $M_0$ is a $(k-1)$-neighbour of $u$, we have	$\{u\}\cup f_{\pi(u)}\in E(G)$.
		Therefore, $\{\{u\}\cup f_{\pi(u)}:u\in T\}$
		is a perfect matching of size $k$ in $G[V(M_0)\cup T]$. This proves Claim.

		
		Let $\mathcal V$ be the set of parts  containing  vertices not uncovered by $M$. If
		$|\mathcal V|\ge k$, choose an uncovered legal $k$-set $T$. Applying
		Claim  with $M^*=M'$, we can  replace $k-1$ edges of $M$ (those in $M_0$) by
		$k$  edges (a perfect matching of  $G[V(M_0)\cup T]$ ). Then we get a contradiction to the choice of $M$. Thus
		\begin{equation}\label{eq:few-uncovered-parts}
			|\mathcal V|\le k-1.
		\end{equation}
		
		\chg{Since} $|M|=\floor{|V(G)|/k}-1$, at least $k$ vertices are
		uncovered by $M$. By \eqref{eq:few-uncovered-parts}, some $V_t\in\mathcal V$
		contains two uncovered vertices. Fix one of them, say $y$. Since
		$|M|\ge\ell n/k-2$ and $|V_t|\le(1+\delta)n$, we have
		$|M|>|V_t|$ for sufficiently large $n$.  Thus there is an edge
		$e_0\in M$ avoiding $V_t$. As every edge meets $k$ distinct parts and
		$|\mathcal V|\le k-1$, the edge $e_0$ meets a part $V_i\notin\mathcal V$.
		Fix $x\in e_0\cap V_i$ and set $T_0=(e_0\setminus\{x\})\cup\{y\}.$
		This is a legal $k$-set disjoint from
		$V(M'\setminus\{e_0\})$. Apply Claim  with
		$M^*=M'\setminus\{e_0\}$ (which still has size at least $\frac{an}2$) and
		$T=T_0$. Replacing all the edges in $M_0\cup\{e_0\}$ by the resulting $k$-edge perfect
		matching of  $G[V(M_0)\cup T]$ to get a new $M$, which preserves the size, covers $y$, and uncovers $x$. The part $V_t$ still has an uncovered vertex, while the previously fully covered
		part $V_i$ becomes uncovered. This contradicts the choice of $M$ and proves
		the lemma.
	\end{proof}
	
	Let $H$ be an $n$-vertex $k$-graph and $\{u,v\}$ be a set of two distinct vertices of $H$. Write $d_H(u,v)=d_H(\{u,v\})$  and $N_H(u,v)=N_H(\{u,v\})$. The set $\{u,v\}$ is called \emph{sparse} in $H$ if $d_H(u,v)\le k\binom{\ell+1}{2}n^{k-3}$, and \emph{dense} in $H$ otherwise. For $v\in V(H)$, if $\{v,u\}$ is a sparse (dense) pair, then $u$ is called the \emph{sparse (dense) neighbour} of $v$. We say that $\{u,v\}$ joins two vertex sets if $u$ belongs to one set and $v$ belongs to the other set.
	The following two lemmas will be used several times in the proof of the main results.
	
	\begin{lemma}\label{obs:extension}
		Let $n,\ell,k$ be integers such that $\ell\ge k\ge3$ and $H$ is an $n$-vertex $k$-graph.
		Let $C\subseteq V(H)$ be a subset of size $\ell+1$ such that all pairs in $\binom{C}{2}$ are dense apart from $\{a,b\}$.
		If there exists an edge $e$ satisfying $e\cap C=\{a,b\}$, then there is a copy of
		$H_{\ell+1}^k$ with core $C$ in $H$.
	\end{lemma}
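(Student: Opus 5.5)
We build the copy greedily. The edge $e$ serves as the expansion edge for the pair $\{a,b\}$, and every other pair of $C$ receives its own edge, chosen one at a time so that everything stays disjoint. The target object is a family of edges $\{e_{xy}: \{x,y\}\in\binom{C}{2}\}$ such that $e_{xy}\cap C=\{x,y\}$ and the sets $e_{xy}\setminus\{x,y\}$ are pairwise disjoint. Such a family is exactly a copy of $H_{\ell+1}^k$ with core $C$: each pair of $C$ is enlarged by its own $k-2$ new vertices, and these new sets are pairwise disjoint and avoid $C$.

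Set $e_{ab}=e$. Since $e\cap C=\{a,b\}$, the set $e\setminus\{a,b\}$ avoids $C$. Now list the remaining $\binom{\ell+1}{2}-1$ pairs of $C$, all of which are dense. Process them in order, and let $U$ be the set of vertices already used. Initially $U=C\cup e$. After processing $j$ pairs we have $|U|\le \ell+1+(k-2)(j+1)$, so at every stage $|U|< k\binom{\ell+1}{2}$, since $\ell+1\le\binom{\ell+1}{2}$ for $\ell\ge 2$.

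For the current dense pair $\{x,y\}$, we need some $f\in N_H(x,y)$ with $f\cap U=\varnothing$; then $f\cap C=\varnothing$ because $C\subseteq U$. We then set $e_{xy}=\{x,y\}\cup f$ and add $f$ to $U$. To see that such an $f$ exists, count the $(k-2)$-sets meeting $U$: each contains a vertex $u\in U$ and $k-3$ further vertices, so there are at most
\[
|U|\binom{n}{k-3}\le |U|\,n^{k-3}< k\binom{\ell+1}{2}n^{k-3}.
\]
Density means $d_H(x,y)>k\binom{\ell+1}{2}n^{k-3}$, so some $f\in N_H(x,y)$ avoids $U$. The constant $k\binom{\ell+1}{2}$ in the definition of dense is exactly what this count needs.

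After all pairs are processed, the chosen edges form the required copy of $H_{\ell+1}^k$ with core $C$. The only point requiring care is this bookkeeping: the bound on $|U|$ must stay below the density threshold at every step, and the vertices of $e$ must be included in $U$ from the start. I expect no genuine obstacle beyond that.
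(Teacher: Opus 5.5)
Your proof is correct and uses the same greedy argument as the paper. You start from $e$, and for each remaining dense pair you use $d_H(x,y)>k\binom{\ell+1}{2}n^{k-3}$ to pick an edge that meets $C$ only in that pair and avoids all previously used vertices; your explicit bound $|U|<k\binom{\ell+1}{2}$ on the used vertex set supplies the counting that the paper's proof leaves implicit.
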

	
	\begin{proof}
		Let $f_1=e$ and suppose that there are edges $f_1,\ldots,f_t$ such that  for  $i\in [t]$, $f_i\cap C\in \binom{C}{2}$, $f_i\cap C$ are distinct and $f_i\setminus C$ are pairwise disjoint. Note that $t\geq 1$. If $t=\binom{C}{2}$, then the proof is done. So we assume that $1\le t<\binom{C}{2}$. Let $\{x,y\}\in \binom{C}{2}$ be a pair such that there is no $f_i$ contains $\{x,y\}$.  Since $\{x,y\}$ is dense, $d_H(x,y)> k\binom{\ell+1}{2}n^{k-3}$. Thus there is an edge $f_{t+1}$ contains $x$ and $y$ such that $f_{t+1}\cap C=\{x,y\}$ and $f_i\setminus C$, $i\in [t+1]$,  are pairwise disjoint. Continue this process, we can find the desired $H_{\ell+1}^k$ with core $C$ in $H$.
	\end{proof}

	\begin{lemma}\label{lem:selection}
		Let $H$ be an $n$-vertex $k$-graph and $\{V_1, \cdots, V_r\}$  a partition of $V(H)$. Let $I$ be a subset of $[r]$ of size at least $2$  and for each $i\in I$, let $Z_i$ be a subset of $V_i$.
		Let $\mathcal S$ be the family of sparse pairs joining distinct $Z_i$'s, and set $m=\min_{i\in I}|Z_i|$.
		If $|\mathcal S|<m^2$,  then there exist vertices $z_i\in Z_i$, $i\in I$, such that every pair in $\{z_i:i\in I\}$ is dense in $H$.
	\end{lemma}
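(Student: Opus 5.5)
We argue by a counting (probabilistic) selection. Order the index set as $I=\{i_1,\ldots,i_t\}$ with $t\ge 2$. For each $i\in I$ we will choose $z_i$ uniformly at random from $Z_i$, independently over $i$. The chosen set $\{z_i:i\in I\}$ fails to be pairwise dense exactly when some sparse pair $\{u,v\}\in\mathcal S$ has $u=z_i$ and $v=z_j$ for some distinct $i,j\in I$. Pairs inside a single $Z_i$ are irrelevant, since only one vertex is chosen from each $Z_i$. Also, since the $V_i$ are disjoint, each pair in $\mathcal S$ joins a uniquely determined ordered couple of sets $Z_i,Z_j$.

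For a fixed sparse pair $\{u,v\}$ with $u\in Z_i$ and $v\in Z_j$, $i\ne j$, we have
\[
\mathbb P(z_i=u,\ z_j=v)=\frac{1}{|Z_i||Z_j|}\le \frac1{m^2}.
\]
A union bound over $\mathcal S$ then bounds the probability that some sparse pair is selected by $|\mathcal S|/m^2<1$. Hence with positive probability no sparse pair lies among the chosen vertices, and a suitable choice of the $z_i$ exists.

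An equivalent deterministic formulation counts transversals. There are $\prod_{i\in I}|Z_i|$ choices in total. Each sparse pair joining $Z_i$ and $Z_j$ is contained in $\prod_{h\ne i,j}|Z_h|$ of them, which is at most $\prod_{h\in I}|Z_h|/m^2$. Since $|\mathcal S|<m^2$, the bad transversals number fewer than the total, so at least one transversal is good.

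The only points needing care are that every $Z_i$ is nonempty and that pairs are not double counted. Nonemptiness holds because $|\mathcal S|<m^2$ forces $m\ge1$: if $m=0$ the hypothesis would read $|\mathcal S|<0$, which is impossible. Double counting is avoided because each sparse pair is attributed to the unique couple $\{i,j\}$ of indices whose sets contain its endpoints. There is no real obstacle beyond stating the union bound cleanly.
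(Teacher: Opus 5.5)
Your proof is correct and matches the paper's argument. The paper counts the edges of the complete $|I|$-partite $|I|$-graph $\mathcal T$ with parts $Z_i$ (your transversals), notes that each sparse pair joining $Z_i$ and $Z_j$ lies in $\prod_{h\in I\setminus\{i,j\}}|Z_h|\le |E(\mathcal T)|/m^2$ of them, and concludes from $|\mathcal S|<m^2$ that some transversal contains no sparse pair; your union bound is the same count normalized. Your write-up is in fact slightly cleaner: the paper's version ranges the product over $[r]$ instead of $I$ and writes $|V(\mathcal T)|$ where $|E(\mathcal T)|$ is meant, and you also make explicit that the hypothesis forces $m\ge 1$.
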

	
	\begin{proof}
		Let $\mathcal T$ be the complete $|I|$-partite $|I|$-graph with parts $Z_i$, $i\in I$.
		Then $|E(\mathcal T)|=\prod_{i\in I}|Z_i|$. A sparse pair joining
		$Z_i$ and $Z_j$ belongs to
		$\prod_{h\in[r]\setminus\{i,j\}}|Z_h|$
		edges of $\mathcal T$. Since $\prod_{h\in[r]\setminus\{i,j\}}|Z_h|
		=\frac{|V(\mathcal T)|}{|Z_i||Z_j|}
		\le\frac{|V(\mathcal T)|}{m^2}$, the number of edges of $\mathcal T$
		containing at least one sparse pair is at most
		$\frac{|V(\mathcal T)|}{m^2}|\mathcal S|$. Note that $\frac{|V(\mathcal T)|}{m^2}|\mathcal S|<|E(\mathcal T)|$.
		Therefore, for each $i\in I$,
		there exists $z_i\in Z_i$  such that every pair among
		$\{z_i: i\in I\}$ is dense in $H$.
	\end{proof}

	\section{Proof of Theorems~\ref{thm:intro-graph} and~\ref{thm:intro-hypergraph}
	}
	
	In this section, we use the following lemma to prove Theorems~\ref{thm:intro-graph} and~\ref{thm:intro-hypergraph}.
	\begin{lemma}\label{lem:close-reduction}
		For integers $\ell>k\ge2$ and every finite family $\cF$
		of $k$-graphs, there exist
		$\eps,\beta>0$ and $s_0$ such that the following holds. Let $n,s$ be
		integers with $\max\{s_0,n/k-\beta n\}<s<n/k$. If an $n$-vertex $k$-graph
		$G$ is $\cF$-free, satisfies $\nu(G)\le s$, and is
		$\eps$-close to $T_k(n,\ell)$, then
		$|E(G)|\le\ex_k(ks+k-1,\cF)$.
	\end{lemma}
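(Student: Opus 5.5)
The plan is to use the closeness to $T_k(n,\ell)$ to build a matching in $G$ that covers almost every vertex. Since $\nu(G)\le s<n/k$ and $s>n/k-\beta n$, this will force at least $m:=n-(ks+k-1)$ vertices to carry very few edges. Deleting them should then give an $\cF$-free $k$-graph on $ks+k-1$ vertices while losing nothing.

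\emph{Step 1 (cleaning).} Fix constants $\beta\ll\gamma\ll\eps'\ll\eps_0(k,\ell)$ and $\eps\ll\beta$, where $\eps_0$ is from Lemma~\ref{lem:partite-matching}. Let $V_1,\dots,V_\ell$ be the balanced partition witnessing that $G$ is $\eps$-close to $T_k(n,\ell)$. Call a vertex \emph{atypical} if it misses more than $\eps'(n/\ell)^{k-1}$ of its crossing edges in $T_k(n,\ell)$. Counting missing edges gives that the atypical set $A$ has $|A|\le C\eps n/\eps'$. Let $G_0$ be the crossing subgraph of $G-A$. Every vertex of $G_0$ then has crossing degree at least $\bigl(\binom{\ell-1}{k-1}-2\eps'\bigr)(n/\ell)^{k-1}$, after normalising by $n/\ell$. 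The parts $V_i\setminus A$ differ in size by at most $|A|$, so the size hypothesis $(1+\delta)n'\ge |V_\ell'|\ge\dots\ge|V_1'|\ge n'$ holds with $\delta=O(\eps/\eps')$. Lemma~\ref{lem:partite-matching} therefore gives a matching of $G-A$ covering all but $k-1$ vertices of $V\setminus A$. The same holds after deleting any further set of $O(|A|)$ vertices, because degrees drop by only $O(|A|n^{k-2})$.

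\emph{Step 2 (absorbing heavy atypical vertices).} Split $A=A_h\cup A_\ell$, where $A_h$ consists of the vertices of degree greater than $\gamma n^{k-1}$. We process $A_h$ greedily. Each $u\in A_h$ has an edge whose other $k-1$ vertices lie in $V\setminus A$ and avoid the $O(k|A|)$ vertices already used; this is possible since $\gamma n^{k-1}\gg k|A|\,n^{k-2}$. Put that edge into the matching and delete its vertices. We then apply Step~1 to what remains of $V\setminus A$. The result is a matching covering all but $|A_\ell|+k-1$ vertices of $G$, so $\nu(G)\ge (n-|A_\ell|-k+1)/k$. Together with $\nu(G)\le s$ this forces $b:=|A_\ell|\ge m$, and note that $m\le k\beta n$.

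\emph{Step 3 (counting).} We have $e(G)\le e(G-A_\ell)+b\gamma n^{k-1}$, and $G-A_\ell$ is an $\cF$-free graph on $n-b\le ks+k-1$ vertices. Two regimes need handling.

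\emph{If $b\ge m+1$.} Add $b-m$ vertices to an extremal $\cF$-free graph on $n-b$ vertices. Because that extremal graph contains $T_k(\cdot,\ell)$-like structure (it is at least as large as $G-A_\ell$, hence $\eps$-close to Turán), we can clone a typical vertex. I expect that $\ell>k$ and the $\cF$-freeness of clones of typical vertices will need a short argument here, possibly by requiring $\cF$-freeness to survive blow-ups as in the intended applications. This gives $\ex_k(ks+k-1,\cF)-\ex_k(n-b,\cF)\ge (b-m)c\,n^{k-1}$ with $c\gg\gamma$. The comparison then works provided $b\gamma\le (b-m)c$.

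\emph{The main obstacle.} This is the regime $b$ close to $m$ (including $b=m$). Here the term $b\gamma n^{k-1}$ must be shown to vanish rather than merely be small. I would first sharpen Step~2. Take any non-isolated $u\in A_\ell$ and an edge $e\ni u$. Re-run the absorption with $e$ placed first: its other vertices are either typical, or heavy atypical, or low-degree. In every case they can be covered, since the other low vertices of $e$ are then covered for free. This shows that every low vertex lying in an edge can be added to the covered set, giving $\nu(G)\ge (n-b_0-k+1)/k$, where $b_0$ is the number of isolated vertices. Hence $G$ has at least $m$ isolated vertices, so $G$ lives on at most $ks+k-1$ vertices and $e(G)\le\ex_k(ks+k-1,\cF)$ directly. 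This would even remove the need for the cloning regime. The delicate point is that an edge $e$ at a low vertex may have all its other vertices in $A$. In that case the previously chosen absorbing edges must be re-chosen, and I would handle it by ordering the absorption so that edges meeting $A_\ell$ are placed first, exploiting $|A|\ll n$.
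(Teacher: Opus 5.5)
\emph{The proposal has a genuine gap.} Your Steps~1 and~2 are sound and correctly give $b\ge m$. The problem is how you dispose of the ``main obstacle'': the claim is false. Re-running the absorption with an edge $e\ni u$ placed first covers one non-isolated low vertex at a time. To conclude $\nu(G)\ge(n-b_0-k+1)/k$ you would need pairwise disjoint edges at \emph{all} non-isolated low vertices simultaneously, and these edges may all pass through one bottleneck vertex.

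For a counterexample, take $k=2$, $\ell=3$, $\cF=\{K_4\}$. Let $G$ be a balanced complete tripartite graph on a set $U$ of $2s-1$ vertices, plus a vertex $v$ and a set $I$ of $n-2s$ further vertices, each adjacent only to $v$. Then $G$ is $K_4$-free and $\nu(G)=(s-1)+1=s$. For large $n$ with $2\le n-2s\le\min\{\eps,\beta\}n/2$, it is $\eps$-close to $T_2(n,3)$ and satisfies $n/2-\beta n<s<n/2$. Yet $G$ has no isolated vertex, while $m=n-2s-1\ge1$.

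The same graph defeats your counting. Here $A_\ell=I\cup\{v\}$, so $b=m+2$, and the requirement $b\gamma\le(b-m)c$ becomes $(m+2)\gamma\le 2c$. This fails once $m>2c/\gamma$. The lemma does hold for this $G$, but only because every edge at $I$ passes through $v$; charging $\gamma n^{k-1}$ to each low vertex throws that information away.

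Your $b\ge m+1$ regime has two further problems. It assumes an extremal $\cF$-free graph is close to Tur\'an, which needs a stability theorem for $\cF$ that the lemma does not provide. And cloning a single vertex can create a copy of some $F'\in\cF$ that uses both the vertex and its clone.

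The missing idea is to charge the edges at the leftover atypical vertices to a small bottleneck, and then repair that bottleneck inside $G$ itself. The paper proceeds as follows.
\begin{enumerate}
\item Take a matching $M$ among the edges meeting the atypical set $X$ that maximizes $|U\cup V(M)|$, where $U=V(G)\setminus X$. Applying Lemma~\ref{lem:partite-matching} to the typical part minus $V(M)$ gives $|U\cup V(M)|\le ks+k-1$.
\item A switching argument shows that every edge meeting the uncovered set $I=V(G)\setminus(U\cup V(M))$ meets $V(M_2)$. Here $M_2$ consists of the $M$-edges containing a vertex of degree at most $2k^2\sqrt\delta\,n^{k-1}$. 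The switch re-routes the high-degree vertices of the $M$-edges hit by an offending edge into fresh typical vertices. Hence there are at most $k|I||M_2|n^{k-2}$ edges at $I$, a bound proportional to $|M_2|$ rather than to $|I|$.
\item If $M_2=\emptyset$, then $I$ is isolated and $|E(G)|\le\ex_k(ks+k-1,\cF)$ immediately.
\item Otherwise, rewire one low-degree vertex from each edge of $M_2$. Delete its edges and all edges at $I$. Give it as link the family $N(S)$ of $(k-1)$-sets lying in the crossing link of every vertex of a set $S$ of $q=\max_{F'\in\cF}|V(F')|$ typical vertices from one part.
\end{enumerate}
Any copy of $F'\in\cF$ through rewired vertices pulls back to $G$ by mapping those vertices injectively to vertices of $S$ unused by the copy. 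So this works for every finite $\cF$, with no stability or blow-up hypothesis. The new graph is $\cF$-free and has all non-isolated vertices in $U\cup V(M)$, so its matching number is at most $s$. It gains $\Omega(|M_2|n^{k-1})$ edges while losing only $O\bigl((\sqrt\delta+|I|/n)|M_2|n^{k-1}\bigr)$, which yields $|E(G)|<\ex_k(ks+k-1,\cF)$.
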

	
	\begin{proof}
		Since $G$ is $\varepsilon$-close to $T_k(n,\ell)$, there is a
		copy $T$ of $T_k(n,\ell)$ defined on $V(G)$ such that
		\[
		|E(G)\setminus E(T)|+|E(T)\setminus E(G)|
		\leq \varepsilon n^k.
		\]
		Let $X=\{x\in V(G):|N_T(x)\setminus N_G(x)|
		>\sqrt{\varepsilon}n^{k-1}\}.$
		We claim that $|X|\leq k\sqrt{\varepsilon}n$, otherwise
		\[
		\textcolor{black}{|E(T)\setminus E(G)|}
		\geq \frac{1}{k}\sum_{x\in X}|N_T(x)\setminus N_G(x)|
		>\varepsilon n^k,
		\]
		a contradiction.
		
		Let $U=V(G)\setminus X$ and let $H$ be the $k$-graph defined on $U$ with edge
		set $E(G[U])\cap E(T[U])$. Then $H$ is an $\ell$-partite $k$-graph.
		Denote the vertex partition of $H$ by $\{U_1,U_2,\ldots,U_\ell\}$. For each vertex
		$x\in U$, we have
		\begin{equation}\label{H-degree}
			\begin{split}
				d_H(x)
				&\geq |N_G(x)\cap N_T(x)|-|X|n^{k-2}\\
				&\geq d_T(x)-|N_T(x)\setminus N_G(x)|
				-k\sqrt{\varepsilon}n^{k-1}\\
				&\geq d_T(x)-(k+1)\sqrt{\varepsilon}n^{k-1}.
			\end{split}
		\end{equation}
		
		Let $F=\{e\in E(G):e\cap X\neq\emptyset\}$ and $M\subseteq F$ be a
		matching in $G$ which maximizes $|U\cup V(M)|$.  Note that $\{U_1,U_2,\ldots,U_\ell\}$ is the vertex partition of $T-X$. Thus $\frac{n}{\ell}-|X|-1\leq |U_i|\leq \frac{n}{\ell}+1$ for $i\in[\ell]$.
		Since every edge of $M$ intersects $X$ and $M$ is a matching, we have $|M|\leq |X|\leq k\sqrt{\varepsilon}n$. Thus
		\[
		\frac{n}{\ell}-(k+k^2)\sqrt{\varepsilon}n-1
		\leq |U_i\setminus V(M)|
		\leq \frac{n}{\ell}+1
		\]
		for each $i\in[\ell]$. Moreover, for every
		$x\in U\setminus V(M)$, inequality \eqref{H-degree} gives
		\begin{equation}\label{H-minus-M-degree}
			\begin{split}
				d_{H[U\setminus V(M)]}(x)
				&\geq d_T(x)-(k+1)\sqrt{\varepsilon}n^{k-1}
				-|V(M)|n^{k-2}\\
				&\geq
				\binom{\ell-1}{k-1}
				\left(\frac{n}{\ell}-1\right)^{k-1}
				-(k^2+k+1)\sqrt{\varepsilon}n^{k-1}\\
				&\geq
				\left(\binom{\ell-1}{k-1}
				-c\sqrt{\varepsilon}\right)
				\left(\frac{n}{\ell}\right)^{k-1},
			\end{split}
		\end{equation}
		where $c:=c(k,\ell)>0$. Therefore, by choosing $\varepsilon$ sufficiently
		small and $n$ sufficiently large, applying Lemma~\ref{lem:partite-matching}
		to $H[U\setminus V(M)]$ yields a matching $M'$ of size
		$|M'|=\left\lfloor\frac{|U\setminus V(M)|}{k}\right\rfloor.$
		
		If
		$|V(M)\cup U|\geq ks+k$, then
		\[
		\begin{split}
			|M\cup M'|
			=\frac{|V(M)|}{k}
			+\left\lfloor\frac{|U\setminus V(M)|}{k}\right\rfloor
			=\left\lfloor\frac{|V(M)\cup U|}{k}\right\rfloor
			\geq\frac{ks+k}{k}=s+1,
		\end{split}
		\]
		a contradiction. 	Thus $|V(M)\cup U|\leq ks+k-1$.
		
		Let $I=V(G)\setminus(V(M)\cup U)$ and $W=\{x\in V(M):d_G(x)\leq
		2k^2\sqrt{\delta}n^{k-1}\}.$
		Let $M_1=\{e\in M:e\cap W=\emptyset\}$ and $M_2=M\setminus M_1$.
		
		\vspace{0.15cm}	
		{\bf Claim.}		
		For each
		$f\in\{e\in E(G):e\cap I\neq\emptyset\}$, $f\cap V(M_1)=\emptyset$.
		\vspace{0.15cm}	
		
		Suppose, to the contrary,  that there is an $f_0\in \{e\in E(G):e\cap I\neq\emptyset\} $ such that
		$f_0\cap V(M_1)\neq\emptyset$.
		Let $M_0=\{e\in M_1:f_0\cap e\neq\emptyset\}$
		and
		$A=V(M_0)\setminus(f_0\cup U).$
		Since every edge of $M_0$ avoids $W$, we have
		$d_G(x)>2k^2\sqrt{\delta}n^{k-1}$
		for each $x\in A$.
		
		We assert that there exists a matching
		$M'_0=\{h_x:x\in A\}$ such that
		$h_x\cap A=\{x\}$ for every $x\in A$
		and
		$V(M'_0)\setminus A
		\subseteq U\setminus\bigl(V(M)\cup f_0\bigr)$.
		Indeed, suppose that a matching $M''_0$ has already been chosen with respect to
		some vertices of $A$, and let $x\in A\setminus V(M''_0)$. Define
		$B_x=\bigl(V(M)\setminus\{x\}\bigr)
		\cup I\cup f_0\cup V(M''_0).$
		Since $|M|\leq k\sqrt{\varepsilon}n$, $|I|\leq|X|\leq
		k\sqrt{\varepsilon}n$, $|M_0|\leq k$, and
		$\varepsilon\ll\delta$, we have
		$|B_x|<2k^2\sqrt{\delta}n$
		for sufficiently large $n$. Hence the number of edges containing $x$
		and meeting $B_x$ is less than $2k^2\sqrt{\delta}n^{k-1}$,  which is less than $d_G(x)$.
		We may therefore choose an edge
		$h_x$ containing $x$ and avoiding $B_x$. Since
		$X\subseteq V(M)\cup I,$
		all the other vertices of $h_x$ belong to
		$U\setminus(V(M)\cup f_0\cup V(M''_0))$. Continuing this process gives
		the desired matching $M'_0$.
		
		Let
		$\widetilde M=(M\setminus M_0)\cup\{f_0\}\cup M'_0$. Then $\widetilde M$
		is a matching contained in $F$, and  covers every vertex of
		$V(M)\setminus U$ and a vertex of $f_0\cap I$. Consequently,
		$|U\cup V(\widetilde M)|>|U\cup V(M)|,$
		a contradiction  to the choice of $M$. This completes the proof of Claim.

		Recall that for each $e\in M_2$, there is a vertex $x\in e$ such that
		$d_G(x)\leq 2k^2\sqrt{\delta}n^{k-1}$.
		Let $M_2=\{e_1,\ldots,e_{|M_2|}\}$. For each
		$i\in[|M_2|]$, choose one vertex $x_i\in e_i\cap W$, and let
		$U_0=\{x_i:1\leq i\leq |M_2|\}.$
		Then
		$|U_0|=|M_2|$ and $|V(M_2)|=k|U_0|$.
		
		Since $M$ is a matching in $G$ which maximizes $|U\cup V(M)|$, $I$ is a
		weakly independent set. Thus
		$f\cap(V(M)\cup U)\neq\emptyset$ for each
		$f\in\{e\in E(G):e\cap I\neq\emptyset\}$.
		If $f\cap V(M)=\emptyset$, then $M\cup\{f\}$ is a matching in $G$
		such that $M\cup\{f\}\subseteq F$ and $|U\cup V(M\cup\{f\})|>|U\cup V(M)|$,
		a contradiction. Hence $f\cap V(M)\neq\emptyset$. By Claim,
		$f\cap V(M_1)=\emptyset$, it follows that $f\cap V(M_2)\neq\emptyset$.
		If $M_2=\emptyset$, then there is no edge of $G$
		meets $I$. Hence all non-isolated vertices of $G$ lie in
		$U\cup V(M)$, which has size at most $ks+k-1$. Therefore
		$|E(G)|\leq\operatorname{ex}_k(ks+k-1,\cF),$
		and the desired conclusion follows. Thus, in the following proof,
		we may assume that $M_2\neq\emptyset$, and hence $U_0\neq\emptyset$.	
		We will construct an $\cF$-free $k$-graph $G'$  on $V(G)$ such that
		$\nu(G')\leq s$ and $|E(G)|<|E(G')|\leq\operatorname{ex}_k(ks+k-1,\cF)$.

		Let $q=\max\{|V(F')|:F'\in\cF\}$. Since $n$ is sufficiently large, we can choose $q$
		vertices from $U_1\setminus U_0$. Denote the set of these $q$ vertices
		by $S$. Let $N(S)=\bigcap_{x\in S}N_H(x)$ and
		$$\mathcal D=
		\left\{
		A\in\binom{U}{k-1}:
		A\cap U_1=\emptyset
		\text{ and }|A\cap U_i|\leq1\text{ for every }i\in[\ell]
		\right\}.$$
		Since all vertices of $S$ lie in $U_1$, we have $ N(S)\subseteq \mathcal D$. Moreover, the definition of $X$ gives
		$|\mathcal D\setminus N_H(x)|
		\leq\sqrt{\varepsilon}n^{k-1}$ for every $x\in S$.
		As $|U_i|\geq n/\ell-k\sqrt{\varepsilon}n-1$ for every $i\in[\ell]$, we have
		\begin{equation}\label{common-neighborhood}
			\begin{split}
				|N(S)|
				\geq |\mathcal D|
				-\sum_{x\in S}|\mathcal D\setminus N_H(x)|
				\geq |\mathcal D|-q\sqrt{\varepsilon}n^{k-1}
				\geq \frac{n^{k-1}}{2\ell^{k-1}},
			\end{split}
		\end{equation}
		provided that $\varepsilon$ is sufficiently small and $n$ is
		sufficiently large.
		
		We now construct $G'$ by first deleting every edge of $G$
		meeting $I\cup U_0$ and then, for every $x\in U_0$, adding all sets
		$\{x\}\cup A$ with $A\in N(S)$ and $A\cap U_0=\emptyset$.
		Then $I$ is an isolated vertex set in $G'$. Since
		$|U\cup V(M)|\leq ks+k-1$, it follows that $\nu(G')\leq s$.
		
		We next prove that $G'$ is $\cF$-free. Suppose that $G'$ contains a copy
		$Q$ of some $F'\in\cF$. If $V(Q)\cap U_0=\emptyset$, then every edge of
		$Q$ is an edge of $G$, and hence $G$ contains a copy of $F'$, a
		contradiction. Thus
		$C_0=V(Q)\cap U_0\neq\emptyset$.
		Every edge of $G'$ meeting $U_0$ contains exactly one vertex of $U_0$.
		Since $|S|=q\geq |V(F')|$ and $S\cap U_0=\emptyset$, we have
		$|S\setminus V(Q)|
		\geq |S|-|V(Q)\setminus C_0|
		\geq |C_0|$.
		Hence there exists an injection
		\[
		\phi:C_0\longrightarrow S\setminus V(Q).
		\]
		Replacing every $x\in C_0$ by $\phi(x)$ transforms each new edge
		$\{x\}\cup A$ of $Q$ into an edge of $H$, because
		$A\in N(S)\subseteq N_H(\phi(x))$. Since every other edge of $Q$ is an edge of $G$, we obtain a copy of $F'$ in $G$. This contradiction implies that $G'$ is $\cF$-free and $|E(G')|\leq\operatorname{ex}_k(|U\cup V(M)|,\cF)\leq\operatorname{ex}_k(ks+k-1,\cF)$.
		
		From $G$ to $G'$, the number of added edges is at least
		$|U_0|\bigl(|N(S)|-|U_0|n^{k-2}\bigr).$
		The number of deleted edges meeting $U_0$ is at most
		$\sum_{x\in U_0}d_G(x)
		\leq 2k^2\sqrt{\delta}|U_0|n^{k-1}.$
		By Claim, every deleted edge meeting $I$ also meets
		$V(M_2)$. Hence the number of such edges is at most
		$|I||V(M_2)|n^{k-2}
		=k|I||U_0|n^{k-2}.$
		Consequently,
		\begin{equation}
			\begin{split}
				|E(G')|-|E(G)|
				&\geq |U_0|\bigl(|N(S)|-|U_0|n^{k-2}
				-2k^2\sqrt{\delta}n^{k-1}
				-k|I|n^{k-2}\bigr)\\
				&\geq |U_0|\bigl(
				c-(k^2+k)\sqrt{\varepsilon}
				-2k^2\sqrt{\delta}
				\bigr)n^{k-1}\\
				&>0,
			\end{split}
		\end{equation}
		where the last inequality follows by first choosing $\delta$ sufficiently
		small and then choosing $\varepsilon\ll\delta$. Therefore
		$|E(G)|<|E(G')|\leq\operatorname{ex}_k(ks+k-1,\cF)$.
		This completes the proof.	
	\end{proof}

	\begin{proof}[Proof of Theorem~\ref{thm:intro-graph}]
		The lower bound follows by taking a maximum $F$-free graph on
		$2s+1$ vertices and adding $n-2s-1$ isolated vertices.
		
		For the upper bound, let $G$ be a maximum $F$-free graph on $n$
		vertices with $\nu(G)\le s$. Since  $T_2(n,\ell)$ is
		$F$-free,
		$|E(G)|\ge\ex_2(2s+1,F)\ge t_2(2s+1,\ell)$.
		The inequality $s>n/2-\beta n$ implies $2s+1\geq n-2\beta n$. It follows that
		$t_2(2s+1,\ell)\ge t_2(n,\ell)-2\beta n^2$  for all sufficiently large $n$. Taking $\beta$
		small enough and applying Theorem \ref{erdos-stability}, we can derive that $G$ is $\eps$-close to $T_2(n,\ell)$, where $\eps$ is required by
		Lemma~\ref{lem:close-reduction}. Applying Lemma~\ref{lem:close-reduction} with $k=2$ yields
		$|E(G)|\le\ex_2(2s+1,\cF)$, as desired.
	\end{proof}
	
	\begin{proof}[Proof of Theorem~\ref{thm:intro-hypergraph}]
		For the lower bound, take a maximum
		$H_{\ell+1}^k$-free $k$-graph on $ks+k-1$ vertices together with $n-(ks+k-1)$
		isolated vertices. This gives
		$\ex_k(n,\{H_{\ell+1}^k,M_{s+1}^k\})
		\ge\ex_k(ks+k-1,H_{\ell+1}^k)$.
		
		For the upper bound, let $G$ be a maximum
		$H_{\ell+1}^k$-free $n$-vertex $k$-graph with $\nu(G)\le s$. By Theorem \ref{thm:weak-turan},
		$|E(G)|\ge t_k(ks+k-1,\ell)$.
		Since $s>n/k-\beta n$, we have $ks+k-1>(1-k\beta)n$. Hence for fixed
		$k,\ell$ and sufficiently large $n$,
		$t_k(ks+k-1,\ell)\ge t_k(n,\ell)-k\beta n^k$. Choose $\beta$ sufficiently small. Theorem \ref{thm:exp-stability} implies that $G$ is $\eps$-close to
		$T_k(n,\ell)$, where
		$\eps$ is required by Lemma~\ref{lem:close-reduction}.
		Thus $|E(G)|\le\ex_k(ks+k-1,H_{\ell+1}^k)$ by Lemma~\ref{lem:close-reduction}.
	\end{proof}
	
	\section{Proof of Theorem~\ref{thm:intro-boundary}}
	Theorem~\ref{thm:intro-hypergraph} determines $\ex_k(n,\{H_{\ell+1}^k,M_{s+1}^k\})$ for $\ell>k$. In contrast to the case $\ell>k$, the case $\ell=k$ has a different extremal graph. In this section, we deal with this case.
	For $0\le x\le n$, define
	$h_{n,k}(x)=x\,t_{k-1}(n-x,k-1)$.
	
	\begin{lemma}\label{lem:boundary-product}
		For every integer $0\le x<n/k$,
		$0\le h_{n,k}(x+1)-h_{n,k}(x)\le(n-kx)n^{k-2}$.
	\end{lemma}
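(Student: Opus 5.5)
The plan is a direct computation. It uses the fact that $t_{k-1}(m,k-1)$ is simply the product of the $k-1$ part sizes of a balanced partition of $m$ vertices, because $T_{k-1}(m,k-1)$ consists of all legal transversals of $k-1$ parts. Write $m=n-x$ and $t(m)=t_{k-1}(m,k-1)$.

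The key structural observation is how a balanced partition of $m-1$ vertices arises from one of $m$ vertices. Deleting one vertex from a largest part of a balanced partition of $m$ vertices yields a balanced partition of $m-1$ vertices. Let $a=\lceil m/(k-1)\rceil$ be the size of that largest part, and let $D$ be the product of the sizes of the remaining $k-2$ parts. Then
\[
t(m)=aD,\qquad t(m-1)=(a-1)D .
\]
(This remains valid if some parts are empty, since then both sides vanish consistently.) Consequently
\[
h_{n,k}(x+1)-h_{n,k}(x)=(x+1)(a-1)D-x\,aD=D\,(a-1-x).
\]

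It then remains to bound the two factors.

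For the lower bound, $D\ge 0$, so it suffices to show $a\ge x+1$. Since $x<n/k$, we have $n-x>(k-1)x$, i.e.\ $m/(k-1)>x$. Because $x$ is an integer, this gives $a=\lceil m/(k-1)\rceil\ge x+1$. Hence the difference is nonnegative.

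For the upper bound, each of the $k-2$ remaining parts has size at most $n$, so $D\le n^{k-2}$. Also $a\le (m+k-2)/(k-1)$, so
\[
a-1-x\le \frac{n-x+k-2}{k-1}-1-x=\frac{n-kx-1}{k-1}\le n-kx .
\]
Multiplying the two bounds gives $h_{n,k}(x+1)-h_{n,k}(x)\le (n-kx)n^{k-2}$.

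There is no real obstacle here. The only point needing care is the justification that removing a vertex from a largest part keeps the partition balanced. This holds because all part sizes lie in $\{a-1,a\}$ with at least one equal to $a$, and after the deletion the sizes still lie in $\{a-1,a\}$ or all become $a-1$. Either way the resulting partition is balanced, so its product is exactly $t(m-1)$.
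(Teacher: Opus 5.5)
Your proof is correct and is essentially the paper's argument: write $t_{k-1}(n-x,k-1)=aD$ with $a=\lceil (n-x)/(k-1)\rceil$ the size of a largest part, observe that deleting a vertex from that part yields the balanced partition of $n-x-1$ vertices, so the difference equals $(a-1-x)D$, and then use $0\le a-1-x\le (n-kx)/(k-1)$ together with $D\le n^{k-2}$. You also justify why the reduced partition stays balanced and handle possibly empty parts, two details the paper leaves implicit.
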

	
	\begin{proof}
		Let $\{V_1,\ldots,V_{\ell-1}\}$ be a partition of $[n-x]$ such that $|V_i|=\floor{\frac{n-x+i-1}{\ell-1}}$ for each $i\in[\ell-1]$. let
		$b=\floor{\frac{n-x+\ell-2}{\ell-1}}=\ceil{(n-x)/(k-1)}$ be the size of a largest part and let $Q$ be the product of sizes of
		the other $k-2$ parts. Decreasing the largest part by one gives a
		balanced partition of $[n-x-1]$, and hence
		$h_{n,k}(x+1)-h_{n,k}(x)
		=\bigl((x+1)(b-1)-xb\bigr)Q=(b-x-1)Q$.
		Since $x<n/k$, we have
		$0\le b-x-1\le(n-kx)/(k-1)$, and $Q\le n^{k-2}$. Thus the conclusion follows.
	\end{proof}
	
	\begin{proof}[Proof of Theorem~\ref{thm:intro-boundary}]
		Throughout the proof, $k\ge3$ is fixed.
		The fact that the $k$-graph $H(n,k,s,k)$ is
		$H_{k+1}^k$-free, has matching number at most $s$, and contains
		$s\,t_{k-1}(n-s,k-1)$ edges  yields the lower bound of 	$\ex_k\bigl(n,\{H_{k+1}^k,M_{s+1}^k\}\bigr)$.

		For the upper bound, let $G$ be a maximum $n$-vertex
		$\{H_{k+1}^k,M_{s+1}^k\}$-free $k$-graph. Then $|E(G)|\ge s\,t_{k-1}(n-s,k-1)$.
		Let $\gamma=\frac{1}{1024k^2(2k)^{k-2}}$,
		and choose $\varepsilon$ sufficiently small and satisfies
		$0<\varepsilon<
		\left(\frac{\gamma}{4(2k)^k}\right)^2$.
		We apply Theorem~\ref{thm:exp-stability} with parameters $\varepsilon$ and $\ell=k$. Then there exists $\delta>0$ such that, for all sufficiently large $n$, the conclusion of Theorem~\ref{thm:exp-stability} holds.
		Choose $0<\beta<\min\left\{2\delta,\frac{1}{2^kk^k}\right\}$ and then choose $s_0$ sufficiently large.
		Recall that $\max\{s_0,n/k-\beta n\}<s\leq (n-k)/k$.
		Then  $n\ge k(s+1)$, and hence the assumption $s>s_0$  implies $n$ sufficiently large. Moreover,
		$$t_k(n,k)-|E(G)|
		\le t_k(n,k)-s\,t_{k-1}(n-s,k-1)
		\le\bigl(\floor{n/k}-s\bigr)\floor{n/k}^{k-1}
		<(\beta n^k)/2.$$
		Since $G$ is $H_{k+1}^k$-free, from Theorem~\ref{thm:exp-stability}, there is a partition
		$\{W_1,\ldots,W_k\}$ of $[n]$ such that $|W_i|=\floor{\frac{n+i-1}{k}}$ for $i\in [k]$ and
		$|E(T_0)\setminus E(G)|\le\varepsilon n^k$, where $T_0$ is the complete $k$-partite $k$-graph  with parts $W_1,\ldots,W_k$.

		Recall that a pair $\{u,v\}$ is sparse  in $G$ if
		$d_G(u,v)\le k\binom{k+1}{2}n^{k-3}$ and dense in $G$ otherwise.
		Let $\mathcal S$ be the family of sparse pairs in $G$ joining distinct
		$W_i$ and $W_j$, $i,j\in[k]$. For $\{u,v\}\in\mathcal S$,
		\[
		|N_{T_0}(u,v)\setminus N_G(u,v)|
		\ge\left(\frac{n}{2k}\right)^{k-2}
		-k\binom{k+1}{2}n^{k-3}
		\ge\frac12\left(\frac{n}{2k}\right)^{k-2}
		\]
		for sufficiently large $n$. Since every edge in $T_0$ contains
		$\binom{k}{2}$ subsets of size two,
		\[
		|E(T_0)\setminus E(G)|
		\ge\frac{1}{\binom{k}{2}}
		\sum_{\{u,v\}\in\mathcal S}
		|N_{T_0}(u,v)\setminus N_G(u,v)|
		\ge\frac{|\mathcal S|}{2\binom{k}{2}}
		\left(\frac{n}{2k}\right)^{k-2}.
		\]
		Together with $|E(T_0)\setminus E(G)|\le\varepsilon n^k$, we have
		\[
		|\mathcal S|
		\le2\binom{k}{2}(2k)^{k-2}\varepsilon n^2
		\le(2k)^k\varepsilon n^2.
		\]
		Let $F$ be the graph with vertex $V(G)$ and edge set $\mathcal S$. Define
		$R=\{v:d_{F}(v)\ge\sqrt{\varepsilon}\,n\}$ and
		$U_i=W_i\setminus R$.
		Since
		$|R|\sqrt{\varepsilon}\leq\sum_{v\in R}d_{F}(v)\leq 2|\mathcal S|$,
		the choice of $\varepsilon$ gives
		$|R|\le2(2k)^k\sqrt{\varepsilon}\,n<\frac{\gamma n}{2}$.
		It follows that $|U_i|\ge\frac n{2k}$ for $i\in[k]$.
		In particular, every vertex of $U_i$ has fewer than
		$\sqrt{\varepsilon}\,n$ sparse neighbours in the other parts.
		
		We now show that
		for $D\in E(G)$ and $i\in[k]$, $|D\cap U_i|\le1$.
		Indeed, suppose that $D$ contains distinct  $x,y\in U_i$ for some $i\in [k]$. For
		each $j\ne i$, deleting from $U_j$ the vertices of $D$ and the sparse
		neighbours of $x$ or $y$, the remaining set has size at least
		$|U_j|/2$. Lemma~\ref{lem:selection} gives a vertex from each
		remaining set such that all selected pairs are dense. Together with
		$x,y$, these vertices form
		a $(k+1)$-set whose only possibly sparse pair is $\{x,y\}$, and the intersection of $D$ and
		this $(k+1)$-set is exactly $\{x,y\}$. By
		Lemma~\ref{obs:extension}, there is a copy of $H^k_{k+1}$ in $G$, a contradiction.
		
		Among all partitions $\mathcal P:=\{P_1,\ldots,P_k\}$ on $[n]$ with
		$U_i\subseteq P_i$, choose one maximizing
		\[
		\Phi(\mathcal P):=
		\sum_{D\in E(G)}|\{i:D\cap P_i\ne\emptyset\}|.
		\]
		Since only vertices of $R$ are reassigned,
		$\left||P_i|-\frac nk\right|\le |R|+1\le\gamma n$ for $i\in[k]$.
		Let $T$ be the complete $k$-partite $k$-graph on this partition, and let
		$\mathcal B=E(G)\setminus E(T)$ and $\mathcal M=E(T)\setminus E(G)$.
		For $z\in R$, let $i(z)$ be the index such that $z\in P_{i(z)}$, and let
		\[
		R_0=\left\{z\in R:
		z\text{ has at least }\frac{|U_j|}{8}\text{ sparse neighbours belonging to }
		U_j\text{ in $G$ for some }j\ne i(z)\right\}.
		\]
		A pair $\{x,y\}\subseteq P_i$ is called a \emph{bad pair} if it is
		contained in an edge of $\mathcal B$ and
		$\{x,y\}\cap R_0\ne\emptyset$. Let $\mathcal Q$ be the family of all
		bad pairs.
		
		We  observe that every edge of $\mathcal B$ contains a bad pair.
		Indeed, let $D\in\mathcal B$ and choose distinct $x,y\in D\cap P_i$.
		If neither $x$ nor $y$ has at least $|U_j|/8$ sparse neighbours in
		some $U_j$, $j\ne i$, then for every $j\ne i$, more than
		$3|U_j|/4$ vertices of $U_j$ are dense neighbours of both $x$ and $y$.
		Applying Lemma \ref{obs:extension} and \ref{lem:selection} to $G-V(D)$, we obtain a copy of $H_{k+1}^k$ in $G-V(D)$,
		a contradiction.
		Hence one of $x,y$, say $x$, has at least $|U_j|/8$ sparse neighbours
		in some $U_j$ with $j\ne i$. Moreover, $x\notin U_i$, since otherwise
		$d_{\mathcal S}(x)\ge\frac{|U_j|}{8}
		\ge\frac{n}{16k}>\sqrt{\varepsilon}\,n$, a contradiction.
		Thus $x\in R_0$, and so $\{x,y\}$ is a bad pair.
		
		For each $z\in R_0$, let $U_{j(z)}$ be the part in which $z$ has at least $|U_{j(z)}|/8$ sparse neighbours and
		let
		$S_z=\{u\in U_{j(z)}:\{z,u\}\text{ is sparse in } G\}$.
		Then $|S_z|\ge |U_{j(z)}|/8\ge n/(16k)$. For each $u\in S_z$, $\{z,u\}$ is a sparse pair implies that for sufficiently large $n$,
		\[
		\begin{aligned}
			|\mathcal{F}_{z,u}|
			\ge
			\prod_{h\in[k]\setminus\{i,j\}}|P_h|
			-|N_G(z,u)|
			\ge\frac12\left(\frac{n}{2k}\right)^{k-2},
		\end{aligned}
		\]
		where  $\mathcal{F}_{z,u}=\{F\in\mathcal M:\{z,u\}\subseteq F\}$.
		For a fixed $z$, the families $\mathcal{F}_{z,u}$, $u\in S_z$, are pairwise disjoint,
		and every edge of $\mathcal M$ is
		counted for at most $k$ choices of $z$. Hence
		\[
		\begin{aligned}
			|\mathcal M|\geq \frac1k\sum_{z\in R_0}|S_z|\cdot|\mathcal{F}_{z,u}|
			\ge\frac1k\sum_{z\in R_0}|S_z|
			\frac12\left(\frac{n}{2k}\right)^{k-2}
			\ge\frac{|R_0|n^{k-1}}{32k^2(2k)^{k-2}}
			>16\gamma|R_0|n^{k-1}.
		\end{aligned}
		\]
		
		We will prove that
		$|\mathcal B|\le\frac12|\mathcal M|$. Suppose that $|\mathcal B|>|\mathcal M|/2$. For each $z\in R_0$, let
		$\mathcal E_z=
		\{D\in\mathcal B:z\in D,\ |D\cap P_{i(z)}|\ge2\}$.
		Since every edge of $\mathcal B$ contains a bad pair, it belongs to
		$\mathcal E_z$ for at least one $z\in R_0$. Thus
		\[
		\sum_{z\in R_0}|\mathcal E_z|
		\ge|\mathcal B|>\frac12|\mathcal M|
		>8\gamma|R_0|n^{k-1}.
		\]
		Hence some $z\in R_0$, say $z\in P_i$, satisfies
		$|\mathcal E_z|>8\gamma n^{k-1}$. Since the number of edges belonging to $\mathcal E_z$ and intersecting $R\setminus \{z\}$  is at most
		$|R|n^{k-2}
		<\frac{\gamma}{2}n^{k-1}$, there is an edge $D\in \mathcal E_z$ containing a vertex
		$w\in P_i\setminus R=U_i$.
		
		We claim that, for some $j\ne i$, the vertex $z$ has fewer than
		$2\gamma n$ dense neighbours in $U_j$. Otherwise, after deleting the
		vertices of $D$ and the sparse neighbours of $w$, at least
		$\gamma n\ge\gamma|U_j|$ candidates remain in every $U_j$, $j\ne i$.
		Lemmas~\ref{obs:extension} and \ref{lem:selection} again give a copy of
		$H_{k+1}^k$, a contradiction.
		Fix such an index $j$. The number of edges of $G$ containing $z$ and
		meeting $P_j$ is at most
		\[
		2\gamma n\binom{n-2}{k-2}
		+|U_j|k\binom{k+1}{2}n^{k-3}
		+|R|\binom{n-2}{k-2}
		<3\gamma n^{k-1}.
		\]
		Let $\mathcal P'$ be obtained from $\mathcal P$ by moving $z$ from
		$P_i$ to $P_j$. More than $8\gamma n^{k-1}$ edges of $\mathcal B$
		contain $z$ and another vertex of $P_i$.
		Every such edge avoiding $P_j$ increases its contribution to $\Phi$ by one, whereas any edge whose contribution to $\Phi$ decreases  contain $z$ and meet $P_j$.
		Therefore
		\[
		\Phi(\mathcal P')-\Phi(\mathcal P)
		>8\gamma n^{k-1}-2\cdot3\gamma n^{k-1}>0,
		\]
		a contradiction to the maximality of $\mathcal P$. This proves that $|\mathcal B|\le\frac12|\mathcal M|$.
		
		Let $a=\min_i|P_i|$. Since
		$|E(G)|=|E(T)|-|\mathcal M|+|\mathcal B|$,
		\begin{equation}\label{eq:boundary-main}
			|E(G)|\le |E(T)|-\frac12|\mathcal M|
			\le h_{n,k}(a)-\frac12|\mathcal M|.
		\end{equation}
		For the case $a\le s$, Lemma~\ref{lem:boundary-product} gives
		$|E(G)|\le h_{n,k}(a)\le h_{n,k}(s)$ and the proof is done.
		So we assume that $a>s$ in the following proof. Note that
		$0<a-s\le n/k-s<\beta n$.  Let $p_i=|P_i|$ for $i\in[k]$, and relabel the parts such that
		$p_1=a$.
		Choose  a matching $\mathcal F$ of size $a$ in $T$
		that covers $P_1$ uniformly at random. Since $\nu(G)\le s$, at most $s$ edges of
		$\mathcal F$ belong to $G$, and hence
		$|\mathcal F\cap\mathcal M|\ge a-s$. Moreover, every edge of $T$
		belongs to $\mathcal F$ with probability
		$1/\prod_{i=2}^k p_i$. Therefore
		\[
		a-s
		\le \mathbb E|\mathcal F\cap\mathcal M|
		=\frac{|\mathcal M|}{\prod_{i=2}^k p_i},
		\]
		and consequently
		\[
		|\mathcal M|
		\ge(a-s)\prod_{i=2}^k p_i
		\ge(a-s)a^{k-1}\geq (a-s)s^{k-1}
		>(a-s)(1/k-\beta)^{k-1}n^{k-1}.
		\]
		By Lemma~\ref{lem:boundary-product},
		\[
		\begin{aligned}
			h_{n,k}(a)-h_{n,k}(s)
			=\sum_{x=s}^{a-1}\bigl(h_{n,k}(x+1)-h_{n,k}(x)\bigr)
			\le\sum_{x=s}^{a-1}(n-kx)n^{k-2}
			\le k\beta(a-s)n^{k-1}.
		\end{aligned}
		\]
		Combining the last two estimates with \eqref{eq:boundary-main}, we obtain
		\[
		|E(G)|\leq h_n(a)-\frac{(1/k-\beta)^{k-1}}2
		(a-s)n^{k-1} \le h_n(s)-
		\left(\frac{(1/k-\beta)^{k-1}}2-k\beta\right)
		(a-s)n^{k-1}
		\le h_n(s)
		\]
		where the last inequality follows from the choice of $\beta$. This completes the proof.
	\end{proof}

	\section{Proof of Theorem~\ref{thm:small-s} and \ref{thm:small-expansion}}
Throughout this section, we define
$t^*=t_{k-1}(n-s,\ell-1)$ and $c_0=\frac{1}{8(k-1)^{k-2}(\ell-1)^3}$.


\begin{lemma}\label{lem:numerical}
	Let $n,\ell,k,s$ be integers such that $\ell>k\geq 3$. If $1\le s<c_0n$, then
	\begin{align}
		t^* &>(k^2+1)s\binom{n-1}{k-2}, \label{eq:21}\\
		t^*-(k+2)s\binom{n-1}{k-2}&>t_{k-1}(n-2s,\ell-2), \label{eq:22}\\
		t^*-(2s+k)\binom{n-1}{k-2}&>t_{k-1}(n-2s-k,\ell-2), \label{eq:23}
	\end{align}
	and, for every integer $0\le a\le s$,
	\begin{equation}
		t_{k-1}(n-s+a,\ell-1)\le t^*+a\binom{n-1}{k-2}. \label{eq:24}
	\end{equation}
\end{lemma}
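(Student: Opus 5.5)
The plan is to prove all four inequalities from two elementary estimates on $t_{k-1}(m,r)$. The first is a lower bound on the Tur\'an number. For a balanced partition of $m$ vertices into $r$ parts, each of size at least $\lfloor m/r\rfloor\ge m/r-1$,
\[
t_{k-1}(m,r)\ \ge\ \binom{r}{k-1}\Bigl(\frac{m-r}{r}\Bigr)^{k-1}.
\]
The second is a Lipschitz-type estimate. Deleting one vertex from a largest part destroys at most $\binom{m-1}{k-2}$ edges, so
\[
t_{k-1}(m,r)-t_{k-1}(m-1,r)\le\binom{m-1}{k-2}\le\binom{n-1}{k-2}\qquad (m\le n).
\]
Iterating this $a$ times, from $m=n-s+a$ down to $n-s$, gives \eqref{eq:24} immediately.

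For \eqref{eq:21}, use $n-s\ge(1-c_0)n$ and $\ell-1\ge k\ge 3$. The first estimate gives $t^*\ge c_1 n^{k-1}$ with $c_1=\binom{\ell-1}{k-1}\bigl(\tfrac{1}{\ell-1}-o(1)\bigr)^{k-1}$. I would lower-bound this crudely by something like $\frac{1}{(k-1)!}\cdot\frac{(\ell-1)(\ell-2)\cdots(\ell-k+1)}{(\ell-1)^{k-1}}\,n^{k-1}$, allowing a constant-factor loss. The right-hand side is at most $(k^2+1)c_0 n\cdot\frac{n^{k-2}}{(k-2)!}$. The required inequality then becomes an explicit comparison of constants depending on $k$ and $\ell$. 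The factor $(\ell-1)^3$ in $c_0$ should absorb the loss $\prod_{i<k-1}(1-i/(\ell-1))\ge$ a quantity controlled by $(k-1)^{k-2}$-type terms. Here the constant $8(k-1)^{k-2}$ must be checked against $k^2+1$ and $(k-1)!/(k-2)!=k-1$. The lower-order error from floors also needs handling; for small $n$ the hypothesis $s\ge1$ forces $n>1/c_0$, which is large, so the floors are harmless.

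For \eqref{eq:22} and \eqref{eq:23}, I would compare $t^*=t_{k-1}(n-s,\ell-1)$ with $t_{k-1}(n-2s,\ell-2)$ by a direct construction. Take the extremal $(\ell-2)$-partite graph on $n-2s$ vertices and add an $(\ell-1)$-st part of size $s$ (using that $n-s=(n-2s)+s$). This gives
\[
t_{k-1}(n-s,\ell-1)\ \ge\ t_{k-1}(n-2s,\ell-2)+s\cdot t_{k-2}(n-2s,\ell-2),
\]
since the edges using the new part contribute $s$ times the $(k-2)$-partite count. It then suffices that $t_{k-2}(n-2s,\ell-2)>(k+2)\binom{n-1}{k-2}$. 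This does not follow from size alone, because $\binom{n-1}{k-2}\approx n^{k-2}/(k-2)!$, while $t_{k-2}$ is about $\binom{\ell-2}{k-2}(n/(\ell-2))^{k-2}$, which is smaller. So the naive addition fails, and I would instead add a part of size roughly $(n-s)/(\ell-1)$, that is, a balanced new part. With this choice the gain over $t_{k-1}(n-2s,\ell-2)$ is of order $n\cdot n^{k-2}$ (linear in $n$) rather than linear in $s$. More precisely, comparing with monotonicity, $t_{k-1}(n-s,\ell-1)-t_{k-1}(n-2s,\ell-2)\ge t_{k-1}(n-s,\ell-1)-t_{k-1}(n-s,\ell-2)$, and this is $\Omega(n^{k-1}/\ell^{3})$. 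Moving from $\ell-2$ to $\ell-1$ parts increases the Tur\'an density by roughly $\binom{k-1}{2}/(\ell-1)^2\cdot$(density), and together with a $1/\ell$ factor this is where the $(\ell-1)^3$ in $c_0$ comes from. Meanwhile $(k+2)s\binom{n-1}{k-2}<(k+2)c_0 n^{k-1}/(k-2)!$, and the same computation handles \eqref{eq:23} with $2s+k\le 3s$ when $s\ge k$ (the case of small $s$ is handled separately, as the term is then $O(n^{k-2})$).

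The main obstacle is the quantitative lower bound on the density gap $t_{k-1}(m,\ell-1)-t_{k-1}(m,\ell-2)\ge c\,m^{k-1}/(\ell-1)^3$ with an explicit $c$ matching $c_0$. I would attempt this by writing the densities as $\prod_{i=1}^{k-2}(1-i/r)/(k-1)!$ and bounding the difference between $r=\ell-1$ and $r=\ell-2$ via the derivative. That derivative is at least $\frac{1}{r^2}\prod(1-i/r)$, and I expect to lower-bound the product by $(k-1)^{-(k-2)}$-type quantities, with floor errors absorbed by $n\ge 1/c_0$.
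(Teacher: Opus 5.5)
Your plan is essentially the paper's proof. It uses the floor lower bound on $t^*$ for (21) and the one-vertex bound $t_{k-1}(m,\ell-1)-t_{k-1}(m-1,\ell-1)\le\binom{n-1}{k-2}$, iterated, for (24). For (22) it uses the density gap $\binom{q}{r}q^{-r}-\binom{q-1}{r}(q-1)^{-r}\ge\frac{r-1}{2r^{r-1}q^2}$ (with $r=k-1$, $q=\ell-1$), which the paper obtains exactly as you suggest: the mean value theorem applied to $f(u)=\frac{1}{r!}\prod_{i=1}^{r-1}(1-i/u)$, together with $f\ge r^{-r}$.

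Two differences are only cosmetic. The paper bounds $t_{k-1}(n-2s,\ell-2)$ by $\binom{q-1}{r}(q-1)^{-r}n^{r}$ instead of using monotonicity at $n-s$. It also absorbs the floors through $\lfloor (n-s)/q\rfloor\ge (n-qs)/q$ for $s\ge1$ rather than through $n>1/c_0$. One simplification: (23) needs no case split, because $2s+k\le (k+2)s$ for every $s\ge1$, so it follows at once from (22) and the monotonicity of $t_{k-1}(\cdot,\ell-2)$.
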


The proof of Lemma \ref{lem:numerical} is given in Appendix~\ref{app:common-numerical}.



\begin{lemma}\label{lem:budget}
	Let $n, \ell, k,s$ be integers such that $n\geq \ell\geq k\geq 3$. Suppose that $F$ is an $n$-vertex $k$-graph with $\nu(F)\le s$, and $V_2:=\{v\in V(F):d_F(v)\le ks\binom{n-1}{k-2}\}$. Then the following statements hold:
	\begin{enumerate}[label=(\roman*)]
		\item
		$|V(F)\setminus V_2|+\nu(F[V_2])\le s$, and
		
		\item if $s<c_0n$ and there is an $X\subseteq V(F)\setminus V_2$ such that
		$d_F(v)<t^*$ for every $v\in V(F)\setminus(X\cup V_2)$ and
		$\sum_{x\in X}d_{F[X\cup V_2]}(x)
		\le |X|t_{k-1}(|V_2|,\ell-1)$,
		then $|E(F)|\le st^*$.
	\end{enumerate}
\end{lemma}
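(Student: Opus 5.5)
For (i), I would argue by contradiction and build a matching of size $s+1$ greedily. Write $V_1=V(F)\setminus V_2$ and $m=|V_1|$. Let $M$ be a maximum matching of $F[V_2]$, and let $r=|M|=\nu(F[V_2])$.

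Suppose $m+r\ge s+1$. Choose $r'\le r$ edges of $M$ and $m'\le m$ vertices $v_1,\dots,v_{m'}$ of $V_1$ with $r'+m'=s+1$. Process the $v_i$ one at a time, and for each pick an edge $h_i\ni v_i$ that avoids the set
\[
B_i=V(M_{r'})\cup\{v_j:j\ne i\}\cup\bigcup_{j<i}h_j .
\]
Since $h_j\ni v_j$, this set has $|B_i|\le kr'+(m'-1)+(k-1)(i-1)\le k(r'+m'-1)=ks$.

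For a fixed $w\ne v_i$, the number of edges containing both $v_i$ and $w$ is at most $\binom{n-2}{k-2}$. So at most $ks\binom{n-2}{k-2}<ks\binom{n-1}{k-2}<d_F(v_i)$ edges through $v_i$ meet $B_i$, and a valid $h_i$ always exists. The $h_i$ together with the $r'$ chosen edges of $M$ form a matching of size $s+1$, contradicting $\nu(F)\le s$. Hence $m+r\le s$, which is (i).

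For (ii), put $a=s-m\ge 0$; by (i), $r\le a$. I would split $E(F)$ into three classes.
\begin{itemize}
\item Edges meeting $V_1\setminus X$. They number at most $\sum_{v\in V_1\setminus X}d_F(v)<(m-|X|)t^*$.
\item Edges missing $V_1\setminus X$ but meeting $X$. These lie in $F[X\cup V_2]$, so they number at most $\sum_{x\in X}d_{F[X\cup V_2]}(x)\le |X|\,t_{k-1}(|V_2|,\ell-1)$. Since $|V_2|=n-m=n-s+a$, inequality \eqref{eq:24} bounds this by $|X|\bigl(t^*+a\binom{n-1}{k-2}\bigr)$.
\item Edges inside $V_2$. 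Each of them meets $V(M)$ by maximality of $M$, and every vertex of $V_2$ has degree at most $ks\binom{n-1}{k-2}$. So there are at most $kr\cdot ks\binom{n-1}{k-2}\le k^2sa\binom{n-1}{k-2}$ of them.
\end{itemize}

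Summing the three bounds and using $|X|\le m\le s$ gives
\[
|E(F)|\le mt^*+a\,(s+k^2s)\binom{n-1}{k-2}.
\]
By \eqref{eq:21}, $(k^2+1)s\binom{n-1}{k-2}<t^*$, so the second term is at most $at^*$. Therefore $|E(F)|\le (m+a)t^*=st^*$.

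The main obstacle is that Lemma~\ref{lem:numerical} is stated for $\ell>k$, while here $\ell\ge k$. I would check that \eqref{eq:21} and \eqref{eq:24} also hold when $\ell=k$ under $s<c_0n$. This should be routine, because both inequalities only compare $t_{k-1}$ at nearby arguments with linear error terms.
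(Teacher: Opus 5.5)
Your proof is correct and follows the paper's argument essentially step for step: the same greedy augmentation of a maximum matching of $F[V_2]$ by edges through high-degree vertices for (i), and the same three-way split of $E(F)$ together with \eqref{eq:24} and \eqref{eq:21} for (ii). The case $\ell=k$ that you flag is also passed over silently in the paper, but it is harmless: the appendix proof of \eqref{eq:21} only uses $\ell-1\ge k-1$, and \eqref{eq:24} is a degree count that holds for every $\ell$.
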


\begin{proof}
	(i) Let $m=\nu(F[V_2])$ and $M$ be a matching  in $F[V_2]$ of size $m$. Suppose, to the contrary, that $|V(F)\setminus V_2|+m\ge s+1$. Let $h=s+1-m$. Then $|V(F)\setminus V_2|\ge h$.
	Let $\{z_1,\ldots,z_h\}$ be a set of $h$ distinct vertices in $V(F)\setminus V_2$.
	We will greedily choose pairwise disjoint edges $e_i\in E(F)$ such that $z_i\in e_i$ and $e_i\cap V(M)=\emptyset$.
	For  $z_1$, we choose an edge $e_1\in E(F)$ such that $z_1\in e_1$ and $e_1\cap V(M)=\emptyset$.
	Suppose that we have found a matching $M'=\{e_1,\ldots,e_{t-1}\}$ such that $z_i\in e_i$ for $i\in [t-1]$ and $V(M')\cap V(M)=\emptyset$.
	Since $d_F(z_t)>ks\binom{n-1}{k-2}\geq (km+k(t-1)+(h-t))\binom{n-1}{k-2}$, there exists an edge $e_t$ such that $z_t\in e_t$ and $e_t\cap (V(M)\cup V(M'))=\emptyset$. Set $M':= M'\cup \{e_t\}$.
	Continue this process, we can find a matching $M'$ of size $h$. Then $M\cup M'$ is a matching of szie $s+1$, a contradiction.
	
	(ii)
	Let $t=|X|$, $a=|(V(F)\setminus V_2)\setminus X|$, and $h=s-|V(F)\setminus V_2|$. Then $h=s-t-a$. By (i), we have $h\ge0$ and $\nu(F[V_2])\le h$. Since a maximum matching in $F[V_2]$ intersects
	every edge of $F[V_2]$ and every vertex of $V_2$
	has degree at most $ks\binom{n-1}{k-2}$, we have
	$|E(F[V_2])|\le kh\left(ks\binom{n-1}{k-2}\right)=k^2sh\binom{n-1}{k-2}$.
	Note that $|V_2|=n-a-t=(n-s)+h$ and $t\le s$. Lemma \ref{lem:numerical} implies
	\begin{align*}
		|E(F[X\cup V_2])|
		&\le\sum_{x\in X}d_{F[X\cup V_2]}(x)+|E(F[V_2])|\\
		&\le t\,t_{k-1}(|V_2|,\ell-1)+|E(F[V_2])|\\
		&\le t(t^*+h\binom{n-1}{k-2})+k^2sh\binom{n-1}{k-2}
	\end{align*}
	and $(t+k^2s)\binom{n-1}{k-2}\le(k^2+1)s\binom{n-1}{k-2}<t^*$.  Therefore
	$$|E(F)|\le at^*+|E(F[X\cup V_2])|\le(a+t)t^*+h(t+k^2s)\binom{n-1}{k-2}\le(a+t)t^*+ht^*=st^*.$$
\end{proof}

\subsection{Proof of Theorem~\ref{thm:small-s}}

\begin{observation}\label{obs:weak-link}
	If a $k$-graph $H$ is $\cK_{\ell+1}^k$-free, then $L_H(v)$ is
	$\cK_\ell^{k-1}$-free for every $v\in V(H)$.
\end{observation}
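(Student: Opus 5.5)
We argue by contraposition: assume $L_H(v)$ contains a copy $F'$ of some member of $\cK_\ell^{k-1}$ and build a member of $\cK_{\ell+1}^k$ inside $H$. Let $L'$ be a core of $F'$. Then $|L'|=\ell$, $F'$ has at most $\binom{\ell}{2}$ edges, and each pair of $L'$ lies in some edge of $F'$.

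\textbf{The candidate.} Note first that $v\notin V(L_H(v))$, since every $(k-1)$-set in the link avoids $v$. Consider the $k$-graph
\[F=\{\{v\}\cup A: A\in F'\}.\]
Every member of $F$ is an edge of $H$. Set $L=L'\cup\{v\}$, which has $\ell+1$ vertices.

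\textbf{Coverage.} Pairs inside $L'$ are covered by the corresponding enlarged edges of $F$. A pair $\{v,u\}$ with $u\in L'$ is covered by any edge of $F$ containing $u$, and such an edge exists because $u$ lies in some pair of $L'$ (as $\ell\ge2$), so $u$ lies in some $A\in F'$.

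\textbf{Edge count.} We have $|F|=|F'|\le\binom{\ell}{2}\le\binom{\ell+1}{2}$. Hence $F\in\cK_{\ell+1}^k$ with core $L$, and $F\subseteq H$, contradicting the assumption that $H$ is $\cK_{\ell+1}^k$-free.

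There is no real obstacle. The one point to check is that $\cK_\ell^{k-1}$ is well defined, since its definition as stated requires $\ell\ge k-1$. This holds because $\ell\ge k$.
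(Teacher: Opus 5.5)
Your proof is correct and is the paper's argument: add $v$ to each edge of the $\cK_\ell^{k-1}$-member in $L_H(v)$ and take $L'\cup\{v\}$ as the core. You also write out the two checks the paper leaves implicit, namely that each pair $\{v,u\}$ is covered and that $|F|=|F'|\le\binom{\ell}{2}\le\binom{\ell+1}{2}$. (Your closing aside slightly misstates the index condition: for $\cK_\ell^{k-1}$ the analogue of $\ell\ge k$ is $\ell-1\ge k-1$. This does not affect the proof, since the definition makes sense for all parameters.)
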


\begin{proof}
	Suppose that $L_H(v)$ contains a member of $\cK_\ell^{k-1}$ with core $C$. Adding $v$ to every edge of $\cK_\ell^{k-1}$ yields a member of
	$\cK_{\ell+1}^k$ with core $C\cup\{v\}$, a contradiction.
\end{proof}

\begin{lemma}\label{prop:weak-structure}
	Let $n, \ell, k, s$ be integers such that $\ell\geq k\geq 3$ and $1\leq s<c_0n$. Suppose that $F$ is an $n$-vertex $k$-graph, $V_0:=\{v:d_F(v)\ge t^*\}$ and $V_2:=\{v\in V(F):d_F(v)\le ks\binom{n-1}{k-2}\}$. If $F$ is $\cK_{\ell+1}^k$-free and $\nu(F)\le s$, then
	$\sum_{x\in V_0}d_{F[V_0\cup V_2]}(x)
	\le |V_0|\,t_{k-1}(|V_2|,\ell-1)$.
\end{lemma}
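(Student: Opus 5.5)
The plan is to bound each summand separately, i.e.\ to show $d_{F[V_0\cup V_2]}(x)\le t_{k-1}(|V_2|,\ell-1)$ for every $x\in V_0$, using the link of $x$ together with Theorem~\ref{thm:weak-turan}. By Observation~\ref{obs:weak-link}, $L_F(x)$ is $\cK_\ell^{k-1}$-free. Write $L_x$ for the restriction of $L_F(x)$ to $(k-1)$-subsets of $(V_0\cup V_2)\setminus\{x\}$. Then $d_{F[V_0\cup V_2]}(x)=|L_x|$. Split $L_x=L_x^{(2)}\cup L_x^{(0)}$, where $L_x^{(2)}$ consists of the sets contained in $V_2$ and $L_x^{(0)}$ of those meeting $V_0$. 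Since $L_x^{(2)}$ is a $\cK_\ell^{k-1}$-free $(k-1)$-graph on $|V_2|$ vertices and $\ell-1\ge k-1$, Theorem~\ref{thm:weak-turan} gives $|L_x^{(2)}|\le t_{k-1}(|V_2|,\ell-1)$. So the whole task is to make $L_x^{(0)}$ harmless.

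First I would record that $V_0$ is small. If (\ref{eq:21}) is available (or its analogue for $\ell=k$, which follows the same way from $s<c_0n$), then $t^*>ks\binom{n-1}{k-2}$, so $V_0\subseteq V(F)\setminus V_2$. Lemma~\ref{lem:budget}(i) then gives $|V_0|\le s$. Consequently $|L_x^{(0)}|\le s\binom{n}{k-2}$, which is tiny compared with $t^*$.

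The key step is to show that $V_0$ is strongly independent in $F$, so that $L_x^{(0)}=\emptyset$ and the bound is exact. Suppose $x,y\in V_0$ lie in a common edge $e$. Both links have at least $t^*=t_{k-1}(n-s,\ell-1)$ edges and are $\cK_\ell^{k-1}$-free. Deleting the at most $ks$ vertices of $V_0\cup e$ and of a maximum matching of $F[V_2]$ costs only $O(sn^{k-2})$ link edges, which is $o(t^*)$ for $s<c_0n$. Hence each link, restricted to the remaining vertices, is nearly extremal. I would then locate $\ell-1$ vertices $w_1,\dots,w_{\ell-1}$ outside $e$ such that every pair $\{w_i,w_j\}$ and every pair $\{x,w_i\}$, $\{y,w_i\}$ is covered by an edge of $F$, using at most $\binom{\ell+1}{2}$ edges in total; the pair $\{x,y\}$ is covered by $e$. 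This gives a member of $\cK_{\ell+1}^k$ with core $\{x,y,w_1,\dots,w_{\ell-1}\}$, a contradiction. To find the $w_i$, I would choose them greedily from vertices of large degree in both $L_F(x)$ and $L_F(y)$, viewed as $(k-1)$-graphs. The pairs $\{w_i,w_j\}$ will be covered through an edge of $L_F(x)$ containing both, i.e.\ an edge of $F$ through $x$, which simultaneously covers $\{x,w_i\}$ and $\{x,w_j\}$. This requires $L_F(x)$ to contain a set on which every pair is covered, a ``$K_{\ell-1}$-shadow''. Such a set exists whenever a $(k-1)$-graph has more than $t_{k-1}(m,\ell-2)$ edges; this is essentially Theorem~\ref{thm:weak-turan} applied with $\ell-1$ in place of $\ell$ (the case $\ell-1<k-1$, i.e.\ $\ell=k$, being trivial). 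Inequality (\ref{eq:22}) is exactly the kind of estimate that certifies this excess after deleting about $2s$ vertices, which is why I expect it to be used here.

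The main obstacle is making the covering sets for $x$ and for $y$ compatible: the $\ell-1$ vertices must be adjacent in the shadow sense to both $x$ and $y$. If strong independence proves too hard to obtain exactly, my fallback is a weighted count. I would show that each $z\in V_0$ occurring in some set of $L_x^{(0)}$ forces $L_x^{(2)}$ to lose at least $\binom{n}{k-2}$ edges compared with the Turán number. The reason is that the link of $x$ in $V_2$ together with $z$ would otherwise contain a covered $\ell$-set, i.e.\ a member of $\cK_\ell^{k-1}$, which is forbidden. Summing over $x\in V_0$ would then give $\sum_{x\in V_0}|L_x|\le |V_0|\,t_{k-1}(|V_2|,\ell-1)$ directly.
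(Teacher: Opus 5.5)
\textbf{There is a genuine gap: the strong-independence step is not carried out.}

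Your overall architecture matches the paper's. You show that $V_0$ is strongly independent. Then every edge of $F[V_0\cup V_2]$ through $x\in V_0$ has its other $k-1$ vertices in $V_2$, and Theorem~\ref{thm:weak-turan} applied to the $\cK_\ell^{k-1}$-free link $L_F(x)[V_2]$ finishes.

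The strong-independence step is the entire content of the lemma, and you leave it open. You identify the compatibility of the $x$-side and the $y$-side as ``the main obstacle'' and stop there. ``Choosing the $w_i$ greedily from vertices of large degree in both links'' does not say on which vertex set the covered $(\ell-1)$-set inside $L_F(x)$ is to be found. Nor does it say why restricting $L_F(x)$ to vertices that are good for $y$ costs only an affordable number of edges.

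The fallback weighted count is only asserted. Nothing shows that a vertex $z\in V_0$ occurring in $L_x^{(0)}$ costs $L_x^{(2)}$ at least $\binom{n}{k-2}$ edges. Nothing shows that such a loss would pay for all sets of $L_x^{(0)}$, which may contain several vertices of $V_0$.

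\textbf{The missing idea.} In the covering setting, the pair $\{y,w\}$ only needs to lie in \emph{some} edge. So $w$ need only be a non-isolated vertex of $L_F(y)$, and there are few isolated ones. If $L_F(y)$ spanned fewer than $n-s$ vertices, Theorem~\ref{thm:weak-turan} applied to this $\cK_\ell^{k-1}$-free $(k-1)$-graph on its own vertex span would give $d_F(y)\le t_{k-1}(n-s-1,\ell-1)<t^*$, a contradiction. The same holds for $x$.

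Hence $W:=(V(L_F(x))\cap V(L_F(y)))\setminus e$ can be taken of size $n-2s-k$. By \eqref{eq:23},
$|E(L_F(x)[W])|\ge t^*-(2s+k)\binom{n-1}{k-2}>t_{k-1}(n-2s-k,\ell-2)$.
Theorem~\ref{thm:weak-turan} with $\ell-2$ (or a single edge when $\ell=k$) then yields a core $C\subseteq W$ of size $\ell-1$. All its pairs, and all pairs $\{x,c\}$, are covered by edges through $x$. The pairs $\{y,c\}$ are covered because $C\subseteq V(L_F(y))$, and $\{x,y\}$ is covered by $e$. So $C\cup\{x,y\}$ is a covered $(\ell+1)$-set, which is the required contradiction.

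\textbf{Further corrections.}
\begin{itemize}
\item Your estimate ``$O(sn^{k-2})=o(t^*)$'' is false for $s$ linear in $n$. The loss is a constant fraction of $t^*$. This is exactly why the explicit inequality \eqref{eq:23}, which depends on $c_0$, is needed rather than an asymptotic ``nearly extremal'' statement.
\item Deleting a maximum matching of $F[V_2]$ and bounding $|V_0|\le s$ play no role in the argument.
\end{itemize}
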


\begin{proof}
	We first prove that $V_0$ is strongly independent. Suppose that some edge
	$e\in E(F)$ contains two distinct vertices $x,y\in V_0$. Then $s\geq 1$. By
	Observation~\ref{obs:weak-link}, the link $L_F(x)$ is
	$\cK_\ell^{k-1}$-free. If $|V(L_F(x))|<n-s$,  Theorem \ref{thm:weak-turan} implies that
	$	d_F(x)\le t_{k-1}(n-s-1,\ell-1)<t^*$,
	contradicting $x\in V_0$. Thus $|V(L_F(x))|\ge n-s$, and similarly
	$|V(L_F(y))|\ge n-s$. Hence
	$|V(L_F(x))\cap V(L_F(y))|\ge n-2s$.
	Let $W\subseteq(V(L_F(x))\cap V(L_F(y)))\setminus e$ be a set of size $n-2s-k$. For the case $\ell>k$,
	\[
	|E(L_F(x)[W])|\ge t^*-(2s+k)\binom{n-1}{k-2}
	>t_{k-1}(|W|,\ell-2).
	\]
	By \eqref{eq:23} and Theorem~\ref{thm:weak-turan}, there is a member of
	$\cK_{\ell-1}^{k-1}$ in $L_F(x)[W]$. For the case $\ell=k$, $|E(L_F(x)[W])|\ge t^*-(2s+k)\binom{n-1}{k-2}>0$, so any edge of $L_F(x)[W]$ is
	a member of $\cK_{k-1}^{k-1}$. In either case, let $G\in \cK_{k-1}^{k-1}$ be the subgraph of $L_F(x)[W]$ with core $C$. Notice that for every edge $e\in E(G)$, $e\cup \{v\}$ is an edge of $F$ for each $v\in \{x,y\}$.
	Thus every pair in $C$, every pair involving $x$ and every pair  involving $y$ is covered by an edge of $F$. Hence $C \cup \{x,y\}$ is the core of a member of $\mathcal{K}_{\ell+1}^k$ in $F$, a contradiction.
	Thus $V_0$ is strongly independent.
	
	Now let $x\in V_0$. Since $V_0$ is strongly independent, every edge of
	$F[V_0\cup V_2]$ containing $x$ has its remaining $k-1$ vertices in $V_2$.
	Recall that $L_F(x)[V_2]$ is $\cK_\ell^{k-1}$-free for every $x\in V_0$. Hence
	$d_{F[V_0\cup V_2]}(x)=|E(L_F(x)[V_2])|
	\le t_{k-1}(|V_2|,\ell-1)$.
	Summing over $x\in V_0$ proves the lemma.
\end{proof}

\begin{proof}[Proof of Theorem~\ref{thm:small-s}]
	The lower bound is given by the $k$-graph $H(n,\ell,s,k)$. For the upper bound, let $F$ be a
	$\cK_{\ell+1}^k$-free $n$-vertex $k$-graph with $\nu(F)\le s$. Lemma~\ref{lem:budget}(ii) and Lemma~\ref{prop:weak-structure} give $|E(F)|\le st^*$, proving the theorem.
\end{proof}

\subsection{Proof of Theorem~\ref{thm:small-expansion}}

\begin{lemma}\label{lem:exp-numerical}
	Let $n,\ell,k,s$ be integers such that $\ell>k\geq 3$, and let
	$\rho,\eta$ be positives such that
	$\rho<\min\left\{
	\frac{k-2}{64(k-1)^{k-2}(\ell-1)^2},
	\frac{c_0}{8}
	\right\}$ and
	$\eta<\frac{\rho}{2^{k+1}(k-1)^{k-1}}$.
	For $s<c_0n$ and sufficiently large $n$, the following holds.
	\begin{align}
		t^*-\eta n^{k-1}&>(k^2+1)s\binom{n-1}{k-2}, \label{eq:25}\\
		t^*-\eta n^{k-1}&>t_{k-1}(n-s-\rho n,\ell-1), \label{eq:26}\\
		t^*-\eta n^{k-1}-(2s+2\rho n+k)\binom{n-1}{k-2}
		&>t_{k-1}(n-2s-2\rho n-k,\ell-2). \label{eq:27}
	\end{align}
\end{lemma}

The proof of Lemma \ref{lem:exp-numerical} is given in Appendix~\ref{app:expansion-numerical}.

\begin{lemma}\label{lem:exp-structure}
	For every $\ell\geq k\geq 3$, there exist positive constants
	$\rho,\varepsilon,\xi,\eta$ and integers $s_0=s_0(k,\ell)$ and
	$n_0=n_0(k,\ell)$ such that the following holds.
	Let $H$ be an $n$-vertex $H_{\ell+1}^k$-free $k$-graph with $\nu(H)\le s$, where $s_0\le s<c_0n$ and $n\ge n_0$.
	If $|E(H)|\ge st^*$, then
	there exist a set $X\subseteq V(H)$ and pairwise disjoint sets
	$U_1,\ldots,U_{\ell-1}\subseteq V(H)\setminus X$ such that
	$|X|\ge s-\frac{k\binom{\ell+1}{2}}{\eta}$,
	$d_H(x)<t^*$ for every $x\notin X$,
	$d_H(x)\ge t^*-\eta n^{k-1}$ for every $x\in X$, and
	\begin{equation}
		|U_i|
		\ge(1-\varepsilon)\frac{n-s}{\ell-1} \label{eq:51}
	\end{equation}
	for $i\in [\ell-1]$.
	Moreover:
	\begin{enumerate}[label=(\roman*)]
		\item $|e\cap U_i|\le1$ for every $e\in E(H)$ and every $i\in [\ell-1]$;
		\item $X$ is a 	strongly independent set;
		\item every $x\in X$ has at most $(4k\ell)^k\rho n$ sparse neighbours in
		$\bigcup_iU_i$;
		\item every $u\in\bigcup_iU_i$ has at most
		$(4k\ell)^{k/2}\sqrt\rho\,|X|$ sparse neighbours in $X$;
		\item for every $i\in[\ell-1]$ and $u\in U_i$, at most
		$\xi^{1/3}n$ vertices of $\bigcup_{j\ne i}U_j$ are sparse
		neighbours of $u$.
	\end{enumerate}
\end{lemma}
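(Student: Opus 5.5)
We will define $X:=\{x:d_H(x)\ge t^*-\eta n^{k-1}\}$ and the low-degree set $V_2:=\{v:d_H(v)\le ks\binom{n-1}{k-2}\}$, and obtain the $U_i$ from stability applied to links of vertices of $X$.

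\emph{Step 1: size of $X$.} By Lemma~\ref{lem:budget}(i), $|V(H)\setminus V_2|\le s$, so every edge not inside $V_2$ meets one of at most $s$ vertices. Since $\nu(H[V_2])\le s$ and degrees in $V_2$ are small, $|E(H[V_2])|\le k^2s^2\binom{n-1}{k-2}$. Hence
\[
st^*\le |E(H)|\le |X|\,\binom{n-1}{k-1}+(s-|X|)(t^*-\eta n^{k-1})+k^2s^2\binom{n-1}{k-2}.
\]
This bound alone is too weak, because the coefficient of $|X|$ is $\binom{n-1}{k-1}$, which is much larger than $t^*$. The fix is to first bound every degree by roughly $t^*+O(n^{k-2})$, using that the link of a high-degree vertex is $H_\ell^{k-1}$-free up to a bounded number of sparse pairs. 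Concretely, a vertex $x$ whose link contains a $\cK_\ell^{k-1}$-configuration with all pairs dense in $H$ yields $H_{\ell+1}^k$ through Lemma~\ref{obs:extension}. Combined with Theorem~\ref{thm:weak-turan}, this gives $d_H(x)\le t_{k-1}(n,\ell-1)+O(n^{k-2})$, and after the matching restriction $d_H(x)\le t^*+O(s\,n^{k-2})$. For $s\ge s_0$ the resulting deficit count then gives $s-|X|\le k\binom{\ell+1}{2}/\eta$. This degree cap is the main obstacle. My first attempt at it: delete from the link all $(k-1)$-sets containing a sparse pair. There are at most $\binom{n}{2}\cdot k\binom{\ell+1}{2}n^{k-3}$ such sets, which is exactly the source of the $k\binom{\ell+1}{2}$ factor in the statement. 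Then apply Mubayi's theorem to what remains.

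\emph{Step 2: $X$ is strongly independent.} This follows the argument of Lemma~\ref{prop:weak-structure}. Take two vertices of $X$ in a common edge. Their links have large common support of size about $n-2s$. Inequality \eqref{eq:27}, with slack $\eta$ and $\rho$, forces a $(\ell-1)$-core among dense pairs inside the common link. Lemmas~\ref{obs:extension} and \ref{lem:selection} then build $H_{\ell+1}^k$, a contradiction.

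\emph{Step 3: build the $U_i$.} Fix $x\in X$. By \eqref{eq:26}, its link restricted to $V(H)\setminus(X\cup$ few vertices$)$ has at least $t_{k-1}(n-s,\ell-1)-\eta n^{k-1}$ edges and is essentially $H_\ell^{k-1}$-free. Theorem~\ref{thm:exp-stability} (or Erdős–Simonovits when $k-1=2$) makes it $\xi$-close to $T_{k-1}(n-s,\ell-1)$. Take those parts and remove the vertices of large sparse degree, as in the proof of Theorem~\ref{thm:intro-boundary}. This gives \eqref{eq:51} and (v) with the $\xi^{1/3}$ loss.

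Property (i) then follows exactly as in the boundary proof. An edge with two vertices $u,u'$ in some $U_i$, together with dense representatives from the other $U_j$ (Lemma~\ref{lem:selection}) and a vertex of $X$, gives $\ell+1$ vertices whose only possibly sparse pair is $\{u,u'\}$. Lemma~\ref{obs:extension} then produces $H_{\ell+1}^k$.

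For (iii), each $x\in X$ has degree at least $t^*-\eta n^{k-1}$ while its link is confined to $\bigcup U_i$, up to $O(\varepsilon)$ error. Every sparse neighbour $u\in U_i$ costs about $(n/2\ell)^{k-2}$ missing link edges, so there are at most $(4k\ell)^k\rho n$ of them once $\eta,\xi,\varepsilon\ll\rho$. Statement (iv) follows by double counting the sparse pairs between $X$ and $\bigcup U_i$: there are at most $|X|(4k\ell)^k\rho n$ of them. Removing the $u$ with more than $(4k\ell)^{k/2}\sqrt\rho|X|$ sparse neighbours in $X$ removes at most $(4k\ell)^{k/2}\sqrt\rho\,n$ vertices, which \eqref{eq:51} tolerates after adjusting $\varepsilon$.
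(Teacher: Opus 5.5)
\textbf{Gap in Step 1.} This step does not close, and it is the heart of the lemma.

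First, the per-vertex correction you propose is not $O(n^{k-2})$. The count you give, $\binom n2\cdot k\binom{\ell+1}{2}n^{k-3}$, is of order $n^{k-1}$, so paying it for each of the $|X|\approx s$ vertices costs $\Theta(sn^{k-1})$.

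Second, even granting a cap $d_H(x)\le t^*+O(sn^{k-2})$, your count carries the error $|X|\cdot O(sn^{k-2})+k^2s^2\binom{n-1}{k-2}=\Theta(s^2n^{k-2})$. For $s$ of order $c_0n$ this is $\Theta(sn^{k-1})$, which yields only $s-|X|=O(s/\eta)$, not a constant.

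Two ideas are missing.
\begin{enumerate}
\item Remove sparse pairs once, globally. Let $G\subseteq H$ keep only the edges containing no sparse pair. Then $|E(G)|\ge st^*-k\binom{\ell+1}{2}n^{k-1}$, and every link $L_G(y)$ is genuinely $\cK_\ell^{k-1}$-free. This single loss, divided by $\eta n^{k-1}$, is the source of the constant $k\binom{\ell+1}{2}/\eta$.
\item Charge every error to the matching deficit, as in Lemma~\ref{lem:budget}(ii). Set $Y=\{y:d_G(y)\ge t^*-\eta n^{k-1}\}$, $W_1=\{v:ks\binom{n-1}{k-2}<d_G(v)<t^*-\eta n^{k-1}\}$, let $W_2$ be the remaining vertices, and put $h=s-|Y|-|W_1|\ge0$.
\begin{itemize}
\item Then $\nu(G[W_2])\le h$, hence $|E(G[W_2])|\le k^2sh\binom{n-1}{k-2}$.
\item Once $Y$ is known to be strongly independent (so this must be proved \emph{before} the count), each $y\in Y$ has at most $t_{k-1}(n-s+h,\ell-1)\le t^*+h\binom{n-1}{k-2}$ link edges inside $W_2$, by \eqref{eq:24}.
\end{itemize}
All error terms are now multiples of $h$ and are absorbed by $h(t^*-\eta n^{k-1})$ via \eqref{eq:25}. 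This gives $|E(G)|\le st^*-(s-|Y|)\eta n^{k-1}$.
\end{enumerate}

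\textbf{Related problems in Steps 2 and 3.} The inequality \eqref{eq:27}, Mubayi's theorem and the stability theorems apply to $G$-links. You select vertices by $H$-degree, and $d_H(x)\ge t^*-\eta n^{k-1}$ does not bound $d_G(x)$.

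The paper proceeds in a different order:
\begin{itemize}
\item It runs the common-link argument only for $Y$.
\item It builds the $U_i^0$ from stability of $L_G(y)[Q]$ for a set $Y'\subseteq Y$ of $\binom{\ell+1}{2}>k$ vertices. This guarantees that in (i) some $y\in Y'\setminus e$ is a dense neighbour of every vertex of $\bigcup_iU_i^0$; a single fixed $x$ might lie in $e$.
\item Only afterwards does it prove that all of $X=V_0\cup Y$ is strongly independent, using that each $x\in X$ is sparse to fewer than a third of each $U_i^0$.
\end{itemize}
That full strong independence is what you need in (iii). Since $|X|\approx s$ may be linear in $n$, without it the link of $x$ could meet $X$ in far more than $\rho n^{k-1}$ edges.

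\textbf{Constant hierarchy.} The confinement error in (iii) is controlled by $\xi$, not by $\varepsilon$: the deleted sets have size $O(\xi^{2/3}n)\le\rho n$. Your hierarchy $\varepsilon\ll\rho$ is incompatible with (iv). There, deleting $(4k\ell)^{k/2}\sqrt\rho\,n$ vertices while keeping \eqref{eq:51} forces $\sqrt\rho\ll\varepsilon$.
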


\begin{proof}
	Choose $\varepsilon$ sufficiently small and choose $\rho$ satisfying
	$\sqrt\rho\ll\varepsilon$ and
	$\rho<\min\left\{
	\frac{k-2}{64(k-1)^{k-2}(\ell-1)^2},
	\frac{c_0}{8}
	\right\}$.
	Let $\xi,\eta$ be constants such that $\xi^{1/3}\ll\rho$ and
	$\eta<\frac{\rho}{2^{k+1}(k-1)^{k-1}}$.
	Choose $s_0$ and $n_0$ sufficiently large.
	Let $G$ be a spanning subgraph of $H$ whose edges  contain no sparse pair in $H$. Since there are at most $\binom n2$ pairs of vertices of $H$, and each sparse pair is contained in at most $k\binom{\ell+1}{2}n^{k-3}$ edges of $H$, we have
	$|E(H)\setminus E(G)|\le\binom n2 k\binom{\ell+1}{2}n^{k-3}$.
	Therefore
	\begin{equation}
		|E(G)|\ge |E(H)|-\binom n2 k\binom{\ell+1}{2}n^{k-3}
		\ge st^*-k\binom{\ell+1}{2}n^{k-1}. \label{eq:52}
	\end{equation}
	Let
	$Y=\{y\in V(G):d_G(y)\ge t^*-\eta n^{k-1}\}$.
	Observation~\ref{obs:weak-link}  and Lemma~\ref{obs:extension} imply  that
	$L_G(y)$ is $\cK_\ell^{k-1}$-free for every $y\in Y$. If
	$|V(L_G(y))|<n-s-\rho n$, then \eqref{eq:26} and Theorem \ref{thm:weak-turan} imply
	$d_G(y)<t_{k-1}(n-s-\rho n,\ell-1)
	\le t^*-\eta n^{k-1}$,
	contradicting $y\in Y$. Thus for each $y\in Y$,
	\begin{equation}
		|V(L_G(y))|\ge n-s-\rho n. \label{eq:53}
	\end{equation}
	
	We claim that $Y$ is a strongly independent set in $H$. Otherwise, an edge
	$e\in E(H)$ contains two vertices $x,y\in Y$. By \eqref{eq:53}, choose
	$W\subseteq(V(L_G(x))\cap V(L_G(y)))\setminus e$ such that
	$|W|=n-2s-2\rho n-k$.
	Then for the case $\ell>k$, 	by \eqref{eq:27}, we have
	\[
	|E(L_G(x)[W])|\ge t^*-\eta n^{k-1}-(2s+2\rho n+k)\binom{n-1}{k-2}
	>t_{k-1}(|W|,\ell-2).
	\]
	Theorem~\ref{thm:weak-turan} implies that there is a member of
	$\cK_{\ell-1}^{k-1}$ in $L_G(x)[W]$. For the case $\ell=k$, $|E(L_G(x)[W])|\ge t^*-\eta n^{k-1}-(2s+2\rho n+k)\binom{n-1}{k-2}
	>0$ and an edge of $L_G(x)[W]$ is
	a member of $\cK_{k-1}^{k-1}$. In either case, let $C$ denote its core. Every pair in
	$C\cup\{x,y\}$ except possibly $\{x,y\}$ is contained in an edge of $G$ and is
	therefore dense in $H$. Notice that $\{x,y\}$ is covered by $e$, and
	$e\cap(C\cup\{x,y\})=\{x,y\}$.
	By Lemma~\ref{obs:extension}, $H$ has a copy of $H_{\ell+1}^k$, a
	contradiction.
	
	Let
	$W_1=\{v:ks\binom{n-1}{k-2}<d_G(v)<t^*-\eta n^{k-1}\}$ and
	$W_2=V(G)\setminus(Y\cup W_1)$.
	By Lemma~\ref{lem:budget}(i), one can see that $|Y|+|W_1|+\nu(G[W_2])\le s$.
	Let $h=s-|Y|-|W_1|$. Then $h\ge0$. Using similar arguments in the proof of Lemma~\ref{lem:budget}(ii),
	we have
	$|E(G)|<|W_1|(t^*-\eta n^{k-1})+|Y|(t^*+h\binom{n-1}{k-2})+k^2sh\binom{n-1}{k-2}$.
	Since $(|Y|+k^2s)\binom{n-1}{k-2}\le(k^2+1)s\binom{n-1}{k-2}<t^*-\eta n^{k-1}$, we have
	$|E(G)|\leq st^*-(s-|Y|)\eta n^{k-1}$.
	Combining this with \eqref{eq:52} gives
	$s-|Y|\le\frac{k\binom{\ell+1}{2}}{\eta}$.
	By the choice of $s_0$,
	$|Y|\ge s-k\binom{\ell+1}{2}/\eta\ge\binom{\ell+1}{2}$. Choose
	$Y'\subseteq Y$ with $|Y'|=\binom{\ell+1}{2}$, and fix $y_0\in Y'$.
	Let
	$V_0=\{x\in V(H):d_H(x)\ge t^*\}$ and $X=V_0\cup Y$. Then $|X|\geq s-\frac{k\binom{\ell+1}{2}}{\eta}$,
	$d_H(x)<t^*$ for every $x\notin X$, and	$d_H(x)\ge t^*-\eta n^{k-1}$ for every $x\in X$  as required.
	
	For every $x\in Y\cup W_1$, by (13), we have $d_H(x)>ks\binom{n-1}{k-2}$. By Lemma~\ref{lem:budget}(i), we have
	$|(Y\cup W_1)\cup V_0|\le s$, and hence $|(W_1\cup V_0)\setminus Y|\le\frac{k\binom{\ell+1}{2}}{\eta}\le\eta n.$
	Let $Q=W_2\setminus V_0$. For every $y\in Y'$, the link $L_G(y)$ is $\cK_\ell^{k-1}$-free
	and
	\[
	|E(L_G(y)[Q])|
	\ge t^*-\eta n^{k-1}-|(W_1\cup V_0)\setminus Y|n^{k-2}
	\ge t_{k-1}(n-s,\ell-1)-2\eta n^{k-1}\ge t_{k-1}(|Q|,\ell-1)-3\eta n^{k-1}.
	\]
	The last inequality holds since
	$n-s\le|Q|\le n-s+k\binom{\ell+1}{2}/\eta$ and $n$ is sufficiently large.
	Since $H_\ell^{k-1}\in \cK_\ell^{k-1}$, applying Theorem~\ref{thm:exp-stability} with parameters $k-1$ and $\ell-1$ when $k>3$, and Theorem~\ref{erdos-stability} when $k=3$, we conclude that $L_G(y)[Q]$ is $\xi$-close to $T_{k-1}(|Q|,\ell-1)$.
	Thus there
	is a partition $\{V_1^y, \cdots, V_{\ell-1}^y\}$ of $Q$ such that $L_G(y)[Q]$ can be transformed into the complete $(\ell-1)$-partite
	$(k-1)$-graph $K_y$ on these parts by adding and deleting at most $\xi n^{k-1}$ edges, and
	$|V_i^y|=\lfloor(|Q|+i-1)/(\ell-1)\rfloor$ for $i\in [\ell-1]$.
	Since $s\leq c_0n$, we may assume that
	$|V_i^y|\ge n/(2\ell)$ for every $y\in Y'$ and
	$i\in[\ell-1]$.
	
	For $y\in Y'$, let
	$A_{y}=\{u\in Q:\{y,u\}\text{ is sparse in }H\}$.
	Recall that every edge of $G$ contains only dense pairs of $H$. Hence
	$u\in A_{y}$ implies $d_{L_G(y)[Q]}(u)=0$. Since every edge of $K_y$ containing such a vertex is missing from $L_G(y)[Q]$ and each missing edge is counted at most $k-1$ times, we have
	\[	\begin{aligned}
		\xi n^{k-1}
		\ge |E(K_{y})\setminus E(L_G(y)[Q])|
		\ge \frac{|A_{y}|}{k-1}
		\binom{\ell-2}{k-2}\left(\frac{n}{2\ell}\right)^{k-2}.
	\end{aligned}
	\]
	Consequently,
	\begin{equation}
		|A_{y}|\le (4k\ell)^k\xi n.  \label{eq:Ay}
	\end{equation}
	
	Let $y_0\in Y'$. With respect to the fixed partition
	$Q:=\{V_1^{y_0},\cdots, V_{\ell-1}^{y_0}\}$, let
	\[
	\mathcal S_0=\{\{u,v\}:u\in V_i^{y_0},\ v\in V_j^{y_0},\ i\ne j, \ i,j\in [\ell-1],
	\ \{u,v\}\text{ is sparse in }H\}.
	\]
	If $\{u,v\}\in\mathcal S_0$, then no edge of $L_G(y_0)[Q]$ contains $u$ and $v$.
	Every such pair is contained in at least
	$\binom{\ell-3}{k-3}(n/(2\ell))^{k-3}$ edges of $K_{y_0}$ and
	each missing edge contains at most $\binom{k-1}{2}$ pairs. Therefore
	\[
	\begin{aligned}
		\xi n^{k-1}
		\ge |E(K_{y_0})\setminus E(L_G(y_0)[Q])|
		\ge \frac{|\mathcal S_{y_0}|}{\binom{k-1}{2}}
		\binom{\ell-3}{k-3}\left(\frac{n}{2\ell}\right)^{k-3}.
	\end{aligned}
	\]
	It follows that
	$|\mathcal S_0|\le (4k\ell)^k\xi n^2$.
	
	For $u\in V_i^{y_0}$, $ i\in [\ell-1]$, let
	$d_{\mathcal S_0}(u)$ denote the number of vertices in
	$\bigcup_{j\ne i}V_j^{y_0}$ each of which together with $u$ form a sparse pair in $\mathcal S_0$, and let
	$B_0=\{u\in Q:d_{\mathcal S_0}(u)>\xi^{1/3}n\}$.
	Double counting the pairs in $\mathcal S_0$ gives $|B_0|\xi^{1/3}n\leq 2|\mathcal S_0|$. Thus
	\begin{equation}
		|B_0|\le 2(4k\ell)^k\xi^{2/3}n. \label{eq:B0}
	\end{equation}
	For $i\in[\ell-1]$, let
	$U_i^0=V_i^{y_0}\setminus
	\left(B_0\cup\bigcup_{y\in Y'}A_y\right)$.
	By \eqref{eq:Ay}, \eqref{eq:B0}
	and the fact that $\xi^{2/3}\ll\rho\ll\varepsilon$,
	\begin{equation}
		\min_i|U_i^0|\ge(1-\varepsilon/2)\frac{n-s}{\ell-1}
		\mbox{ and }
		\left|Q\setminus\bigcup_iU_i^0\right|\le\rho n. \label{eq:54}
	\end{equation}
	By the definition of $A_y$, every vertex of $\bigcup_iU_i^0$ is a dense neighbour of every vertex of $Y'$ in $H$. By the definition of $B_0$,
	each $u\in U_i^0$ has at most $\xi^{1/3}n$ sparse neighbours in
	$\bigcup_{j\ne i}U_j^0$. In particular, the family of sparse pairs
	joining distinct $U_i^0$'s is a subfamily of $\mathcal S_0$ and has at
	most $(4k\ell)^k\xi n^2$ members.
	
	\vspace{0.15cm}	
	{\bf Claim 1.} $|e\cap U_i^0|\le1$ for every $e\in E(H)$ and
	every $i \in [\ell-1]$.
	\vspace{0.15cm}
	
	Otherwise, after relabelling, we suppose that there is an edge $e$ that contains
	distinct $u_0,u_1\in U_1^0$. Since
	$|Y'|=\binom{\ell+1}{2}>k$, choose $y\in Y'\setminus e$. For
	$j=2,\ldots,\ell-1$, let
	$Z_j=\{u\in U_j^0\setminus e: \{u_0,u\}\text{ and } \{u_1,u\}
	\text{ are dense  in $H$} \}$.
	Thus $|Z_j|\ge|U_j^0|-2\xi^{1/3}n-k\ge|U_j^0|/2$. Applying
	Lemma~\ref{lem:selection} to the family of sparse pairs joining
	distinct $U_j^0$'s, we have a set $\{z_j\in Z_j: 2\le j\le \ell-1\}$ in which
	every pair  is dense in $H$. Since the
	vertex $y$ is  a dense neighbour of each of $u_0,u_1,z_2,\ldots,z_{\ell-1}$, all pairs of
	$C:=\{y,u_0,u_1\}\cup\{z_j:j\ne1\}$ are dense except $\{u_0,u_1\}$. Note that
	$e\cap C=\{u_0,u_1\}$. Lemma~\ref{obs:extension} gives a copy of
	$H_{\ell+1}^k$ in $H$, a contradiction. Claim 1 follows.
	
	Let $K^0$ be the complete $(\ell-1)$-partite $(k-1)$-graph with parts
	$U_i^0$, $i\in [\ell-1]$. We first prove that for each $x\in X$ and $i\in[\ell-1]$,
	\begin{equation}\label{eq:coarse}
		|\{u\in U_i^0:\{x,u\}\text{ is sparse  in $H$ }\}|<\frac13|U_i^0|.
	\end{equation}
	Recall that every $x\in X$ satisfies
	$d_H(x)\ge t^*-\eta n^{k-1}$. Moreover,
	$V(H)\setminus Q$ is contained in
	$Y\cup\bigl((W_1\cup V_0)\setminus Y\bigr)$,
	where $|Y|\le s$ and
	$|(W_1\cup V_0)\setminus Y|\le k\binom{\ell+1}{2}/\eta$. Together with
	\eqref{eq:54}, we can derive that
	\[
	|E(L_H(x)[\textstyle\bigcup_iU_i^0])|
	\ge t^*-\eta n^{k-1}
	-\left(s+\rho n+\frac{k\binom{\ell+1}{2}}{\eta}\right)\binom{n-1}{k-2}.
	\]
	Since $s<c_0n$,
	$|E(L_H(x)[\textstyle\bigcup_iU_i^0])|
	\ge t^*-(c_0+\rho+2\eta)n^{k-1}.$
	Note that $L_H(x)[\textstyle\bigcup_iU_i^0]$ is a subgraph of $K^0$ and
	$|E(K^0)|\le t_{k-1}(|Q|,\ell-1)\le t^*+\eta n^{k-1}$
	for sufficiently large $n$. Since $\rho+3\eta<c_0$, we have
	\begin{equation}\label{eq:coarse-missing}
		|K^0\setminus L_H(x)|<2c_0n^{k-1}.
	\end{equation}
	
	Let $m=\min_i|U_i^0|$. By \eqref{eq:54},
	$m^{k-1}\ge\frac12\left(\frac{n}{\ell-1}\right)^{k-1}$.
	Suppose that \eqref{eq:coarse} fails for some $x$ and $i$. Let
	$S_i=\{u\in U_i^0: \{x,u\}\text{ is sparse}\}$. Then $|S_i|\geq\frac13|U_i^0|\geq \frac13 m $. For each $u\in S_i$\textcolor{blue}{,}
	$d_{K^0}(u)\ge\binom{\ell-2}{k-2}m^{k-2}$ and $d_{L_H(x)}(u)\le k\binom{\ell+1}{2}n^{k-3}=\tau$.
	Since no edge of $K^0$ contains distinct vertices of $U_i^0$,
	\begin{align*}
		|K^0\setminus L_H(x)|
		\ge |S_i|\left[\binom{\ell-2}{k-2}m^{k-2}-\tau\right]
		\ge\frac14\binom{\ell-2}{k-2}m^{k-1}
		\ge\frac18\frac{\binom{\ell-2}{k-2}}
		{ (\ell-1)^{k-1}}n^{k-1}.
	\end{align*}
	As
	$\binom{\ell-2}{k-2}\ge
	\left(\frac{\ell-1}{k-1}\right)^{k-2}$,
	the last expression is at least
	$\frac{n^{k-1}}{8(k-1)^{k-2}(\ell-1)}
	=(\ell-1)^2c_0n^{k-1}
	\ge4c_0n^{k-1}$,
	contradicting \eqref{eq:coarse-missing}. This proves \eqref{eq:coarse}.
	
	\vspace{0.15cm}
	{\bf Claim 2.} $X$ is a strongly independent set in $H$, and then (ii) holds.
	\vspace{0.15cm}
	
	Suppose, to the contrary,  that there is an edge
	$e$ of $H$ that contains distinct $a,b\in X$. For each $i\in[\ell-1]$, let
	$Z_i=\{u\in U_i^0\setminus e:
	\{a,u\}\text{ and }\{b,u\}\text{ are dense}\}$.
	By \eqref{eq:coarse}, $|Z_i|\ge|U_i^0|/3-k\ge|U_i^0|/4$.
	Applying Lemma~\ref{lem:selection} to the family of sparse pairs
	joining distinct $U_i^0$'s, we have the set $\{u_i\in Z_i: i\in[\ell-1]\}$ in which every
	pair  is dense in $H$. Thus
	$\{a,b,u_1,\ldots,u_{\ell-1}\}$ has all pairs dense except possibly $\{a,b\}$,
	while $e$ meets this set exactly in $\{a,b\}$. Lemma~\ref{obs:extension}
	gives a copy of $H_{\ell+1}^k$  in $H$, a contradiction.
	
	\vspace{0.15cm}
	{\bf Claim 3.} For each $x\in X$, there are at most $(4k\ell)^k\rho n$ sparse neighbours
	in $\bigcup_iU_i^0$.
	\vspace{0.15cm}
	
	Since $X$ is strongly independent and
	$|V(H)\setminus(X\cup\textstyle\bigcup_iU_i^0)|
	\le\rho n+\frac{k\binom{\ell+1}{2}}{\eta}$,
	deleting the vertices outside $X\cup\bigcup_iU_i^0$ from $L_H(x)$  gives
	$|E(L_H(x)[\textstyle\bigcup_iU_i^0])|
	\ge t^*-3\rho n^{k-1}$.
	Recall that $|E(K^0)|\le t^*+\eta n^{k-1}$. We have
	$|K^0\setminus L_H(x)|\le3\rho n^{k-1}+\eta n^{k-1}$.	
	Let
	$U_{\rm bad}=\{u\in\textstyle\bigcup_iU_i^0: \{u,x\}\text{ is sparse}\}$.
	If $|U_{\rm bad}|>(4k\ell)^k\rho n$, then
	\[
	|K^0\setminus L_H(x)|
	>\frac1{k-1}|U_{\rm bad}|
	\left[\left(\frac{n-s}{2\ell}\right)^{k-2}
	-k\binom{\ell+1}{2}n^{k-3}\right]
	>3\rho n^{k-1}+\eta n^{k-1},
	\]
	a contradiction.
	Thus there are at most $(4k\ell)^k\rho n|X|$ sparse pairs joining $X$ and
	$\bigcup_iU_i^0$. Claim 3 holds.
	
	By Claim 3, at most $(4k\ell)^{k/2}\sqrt\rho\,n$ vertices of
	$\bigcup_iU_i^0$ have more than $(4k\ell)^{k/2}\sqrt\rho\,|X|$ sparse neighbours in
	$X$. Delete these vertices, and let $U_i$ be the remaining subset of
	$U_i^0$. Since $\sqrt\rho\ll\varepsilon$, \eqref{eq:51} still holds.
	Consequently, (i), (iii) and (iv) holds. (v) follows from the construction.
\end{proof}

\begin{lemma}\label{lem:augmented}
	For  $3\le k\le\ell$, there exist
	$s_0=s_0(k,\ell)$ and $n_0=n_0(k,\ell)$ such that the following holds. Let $H$ be  an
	$n$-vertex  $H_{\ell+1}^k$-free $k$-graph with $\nu(H)\le s$, where $s_0\le s<c_0n$ and $n\ge n_0$. Then $|E(H)|\le st^*$.
\end{lemma}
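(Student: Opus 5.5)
We argue by contradiction and reduce to Lemma~\ref{lem:budget}(ii). Suppose $|E(H)|>st^*$. Then Lemma~\ref{lem:exp-structure} applies and yields the set $X$ and the parts $U_1,\ldots,U_{\ell-1}$ with properties (i)--(v). Let $V_2=\{v:d_H(v)\le ks\binom{n-1}{k-2}\}$. By \eqref{eq:25}, every $x\in X$ has $d_H(x)\ge t^*-\eta n^{k-1}>ks\binom{n-1}{k-2}$, so $X\subseteq V(H)\setminus V_2$. The lemma also gives $d_H(v)<t^*$ for every $v\notin X$. Hence Lemma~\ref{lem:budget}(ii) yields $|E(H)|\le st^*$, a contradiction, provided we verify
\[
\sum_{x\in X}d_{H[X\cup V_2]}(x)\le |X|\,t_{k-1}(|V_2|,\ell-1).
\]
By (ii), $X$ is strongly independent, so $d_{H[X\cup V_2]}(x)=|E(L_H(x)[V_2])|$. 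It therefore suffices to show that $L_H(x)[V_2]$ is $\cK_\ell^{k-1}$-free for every $x\in X$; Theorem~\ref{thm:weak-turan} then gives the required bound termwise. (When $\ell=k$, being $\cK_k^{k-1}$-free means having no edges; the same argument applies.)

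The main step is to rule out a member $Q\in\cK_\ell^{k-1}$ with core $C$ in $L_H(x)[V_2]$. Each pair in $C$ is covered by an edge $f\cup\{x\}$ of $H$ with $f\in Q$, and each pair $\{x,c\}$ with $c\in C$ is covered as well. The difficulty is that $H$ is only $H_{\ell+1}^k$-free, not $\cK_{\ell+1}^k$-free, so the covering edges may overlap. We cannot embed $H_{\ell+1}^k$ on $C\cup\{x\}$ directly, since the pairs inside $C$ may be sparse.

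To handle this, we use the large set $X$ of near-extremal vertices, which has $|X|\ge s-O(1)\ge\binom{\ell+1}{2}+k$ once $s_0$ is large, together with the dense structure on $\bigcup_i U_i$. First we locate a second vertex $x'\in X$ such that all pairs $\{x',c\}$, $c\in C$, are dense. Then we replace $C$ by vertices of the $U_i$. Concretely, by (i), two vertices of the same $U_i$ never lie in a common edge. If $|C\cap\bigcup_i U_i|$ is large, pigeonhole forces two core vertices into one $U_i$ while being covered by an edge, contradicting (i). Otherwise some core vertex $c$ lies outside $\bigcup_i U_i$. We then find in each $U_j$ a vertex dense to $x$ and to the relevant vertices, using (iii)--(v) and Lemma~\ref{lem:selection}. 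This produces an $(\ell+1)$-set all of whose pairs are dense except one pair that is covered by an edge meeting the set exactly in that pair, and Lemma~\ref{obs:extension} gives a copy of $H_{\ell+1}^k$.

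I expect this step to be the main obstacle. If it resists a direct treatment, the fallback is to avoid full $\cK_\ell^{k-1}$-freeness and instead bound $|E(L_H(x)[V_2])|$ by an edge-counting argument. Edges of the link inside $\bigcup_i U_i$ are transversal by (i), so they number at most $t_{k-1}(|V_2|,\ell-1)$ minus the missing ones. Edges touching the $O(\rho n)$ vertices of $V_2\setminus\bigcup_i U_i$ are controlled by showing that such a vertex $v$ has $d_{L_H(x)}(v)$ much smaller than its degree in the Tur\'an graph, unless $v$ behaves like a member of some $U_i$; in that case we add $v$ to that $U_i$, with (i) extending to it by the same Lemma~\ref{obs:extension} argument as in Claim~1. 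After this absorption, every vertex of $V_2$ either belongs to an enlarged part or has deficient link degree. The link restricted to $V_2$ is then essentially $(\ell-1)$-partite with the enlarged parts, and the sum bound follows from $(\ell-1)$-partiteness and the maximality of the balanced partition in $t_{k-1}(|V_2|,\ell-1)$.
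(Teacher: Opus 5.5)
Your reduction to Lemma~\ref{lem:budget}(ii) is exactly the paper's: $X\subseteq V(H)\setminus V_2$ by \eqref{eq:25}, and $d_{H[X\cup V_2]}(x)=|E(L_H(x)[V_2])|$ by strong independence. The gap is the main step. Your argument uses only $H_{\ell+1}^k$-freeness, $\nu(H)\le s$, the degree conditions and (i)--(v), and these do not imply that each $L_H(x)[V_2]$ is $\cK_\ell^{k-1}$-free. They do not even imply the termwise bound $|E(L_H(x)[V_2])|\le t_{k-1}(|V_2|,\ell-1)$, because vertices of $R=V_2\setminus\bigcup_iU_i$ have no controlled dense/sparse profile.

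Here is a counterexample with $k=\ell=3$. Take $H(n-1,3,s,3)$ with $|V_0|=s$ and other parts $A,B$. Add a vertex $z$ and all triples $\{x_0,z,u\}$, $u\in A\cup B$, for one fixed $x_0\in V_0$. Every edge at $z$ contains $x_0$, so $z$ cannot be a core vertex of a copy of $H_4^3$. The only pairs avoiding $z$ that the new edges cover are the pairs $\{x_0,u\}$, which were already covered. Hence $H$ is $H_4^3$-free, every edge meets $V_0$, and the degree conditions and (i)--(v) hold with $X=V_0$, $U_1=A$, $U_2=B$. Yet $z\in V_2$, $L_H(x_0)[V_2]$ contains the triangles $zab$, and it has $t_2(n-1-s,2)+(n-1-s)>t_2(n-s,2)$ edges.

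Both of your steps fail on this example. Your step ``find $x'\in X$ dense to all of $C$'' fails, since $\{z,x'\}$ is sparse for every $x'\ne x_0$. The dichotomy in your fallback also fails: $z$ has full degree in $L_H(x_0)$ and is joined to both parts, so it is neither deficient in $L_H(x_0)$ nor absorbable into a $U_i$. Its deficiency shows up only in the links of the other vertices of $X$. Since this $H$ has fewer than $st^*$ edges, any correct argument must use the edge count globally. (Separately, for $\ell=k$, being $\cK_k^{k-1}$-free does not mean having no edges: a single $(k-1)$-edge cannot cover the pairs of a $k$-set.)

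The missing idea is to bound only the sum over $X$, with multiplicities. Write $m(V_2')=|\{x\in X:\{x\}\cup V_2'\in E(H)\}|$; the target is $\sum m(V_2')\le t\,t_{k-1}(|V_2|,\ell-1)$. The paper chooses a partition $\mathcal P$ of $V_2$ with $U_i\subseteq P_i$ maximizing $\Phi(\mathcal P)=\sum m(V_2')|\sigma_{\mathcal P}(V_2')|$, and compares $\Bad(\mathcal P)$ with $\Miss(\mathcal P)$.

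The local extension arguments you have in mind (Lemmas~\ref{obs:extension} and~\ref{lem:selection}) are used only to show one thing. Every non-transversal edge $\{x\}\cup V_2'$ contains some $z\in R$ that is dense to fewer than $(1/2+\varepsilon)|X|$ vertices of $X$, or sparse to more than $(1/2-2\varepsilon)|U_j|$ vertices of some other $U_j$. Each such $z$ forces $\Omega(t\,n^{k-2})$ missing weight in $\Miss(\mathcal P)$, spread over the links of many vertices of $X$. So if $\Bad>\Miss$, averaging yields a single $z$ carrying $\Omega(t\,n^{k-2})$ bad weight. Moving $z$ to a part $P_j$ where it has few dense neighbours then strictly increases $\Phi$, a contradiction. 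You need some such $X$-aggregated exchange argument to close the proof.
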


\begin{proof}
	Let $\rho,\varepsilon,\xi,\eta$ be the constants supplied
	by Lemma~\ref{lem:exp-structure}, and choose $s_0$ and $n_0$ sufficiently
	large so that Lemma~\ref{lem:exp-structure} and all inequalities in the proof of Lemma~\ref{lem:exp-structure}
	are valid.
	Suppose, to the contrary, that $|E(H)|> st^*$. By
	Lemma~\ref{lem:exp-structure}, there is a strongly independent set $X\subseteq V(H)$ with $|X|\ge s-\frac{k\binom{\ell+1}{2}}{\eta}$, together with pairwise disjoint sets $U_1',\ldots,U_{\ell-1}'\subseteq V(H)\setminus X$ satisfying the conclusions of Lemma~\ref{lem:exp-structure}.
	
	Let $t=|X|$,
	$V_2=\{x\in V(H):d_H(x)\le ks\binom{n-1}{k-2}\}$ and
	$V_1=V(H)\setminus(X\cup V_2)$.
	By Lemma~\ref{lem:exp-structure} and \eqref{eq:25}, $d_H(x)\ge t^*-\eta n^{k-1}>ks\binom{n-1}{k-2}$ for every $x\in X$.
	Moreover, $X$ contains every vertex of degree at least $t^*$ in $H$, so every vertex
	of $V_1$ has degree less than $t^*$, as required in
	Lemma \ref{lem:budget}(ii). For each $(k-1)$-set $V_2'\subseteq V_2$, define
	\begin{equation}
		m(V_2')=|\{x\in X:\{x\}\cup V_2'\in E(H)\}|. \label{eq:55}
	\end{equation}
	Since $X$ is strongly independent, $\sum_{x\in X}d_{H[X\cup V_2]}(x)=\sum_{V_2'\in\binom{V_2}{k-1}}m(V_2')$.
	
	To get the contradiction, by Lemma \ref{lem:budget}(ii), it suffices to prove
	\begin{equation}\label{eq:target-link-sum}
		\sum_{V_2'\in\binom{V_2}{k-1}}m(V_2')
		\le t\,t_{k-1}(|V_2|,\ell-1).
	\end{equation}
	Since $d_H(x)>ks\binom{n-1}{k-2}$ for $x\in V_1$, Lemma~\ref{lem:budget}(i) gives
	$|X|+|V_1|\le s$, and hence
	$|V_1|\le k\binom{\ell+1}{2}/\eta$. Let
	$U_i=U_i'\setminus V_1$ for $i\in[\ell-1]$. Then
	$|U_i|\ge(1-3\varepsilon)(n-s)/(\ell-1)$.
	Let
	$R=V_2\setminus\bigcup_{i=1}^{\ell-1}U_i$. Then
	$|R|\le3\varepsilon(n-s)+s-|X|-|V_1|\le4\varepsilon n$.
	If $R=\varnothing$, then $V_2=\bigcup_{i=1}^{\ell-1}U_i$. By Lemma~\ref{lem:exp-structure}(i), $|e\cap U_i|\le1$ for each $e\in E(H)$ and each $i\in[\ell-1]$.
	Since $m(V_2')\le |X|= t$, \eqref{eq:target-link-sum} follows.

	Assume
	that $R\ne\varnothing$. 
	Among all partitions
	$\mathcal P=\{P_1,\ldots,P_{\ell-1}\}$ of $V_2$ such that
	$U_i\subseteq P_i$, choose one maximizing
	\begin{equation}
		\Phi(\mathcal P):=
		\sum_{V_2'\in\binom{V_2}{k-1}}m(V_2')\cdot|\sigma_{\mathcal P}(V_2')|,\label{eq:56}
	\end{equation}
	where $\sigma_{\mathcal P}(V_2'):=\{i:V_2'\cap P_i\ne\varnothing\}$.
	Let $K$ be the complete $(\ell-1)$-partite $(k-1)$-graph with parts $P_1,\ldots,P_{\ell-1}$. Define
	\begin{align}
		\Bad(\mathcal P)&=
		\sum_{V_2'\in\binom{V_2}{k-1}\setminus K}m(V_2'), \label{eq:57}\\
		\Miss(\mathcal P)&=\sum_{V_2'\in K}(t-m(V_2')). \label{eq:58}
	\end{align}
	If $\Bad(\mathcal P)\le\Miss(\mathcal P)$, then
	$\sum_{V_2'\in\binom{V_2}{k-1}}m(V_2')
	\le t|E(K)|\le t\,t_{k-1}(|V_2|,\ell-1)$
	and the proof is done.
	
	Now we assume that
	$\Bad(\mathcal P)>\Miss(\mathcal P)$. We will eventually show that this cannot happen.
	
	\vspace{0.15cm}
	{\bf Claim.}	
	Let $x\in X$ and let $V_2'\in E(L_H(x)[V_2])\cap (\binom{V_2}{k-1}\setminus E(K))$.
	Then there exists $i\in[\ell-1]$ satisfying $|V_2'\cap P_i|\ge 2$, and a vertex $z\in V_2'\cap P_i\cap R$ such that either
	\begin{equation}
		|\{y\in X: \{z,y\}\text{ is dense}\}|<(1/2+\varepsilon)|X|, \label{eq:59}
	\end{equation}
	or there exists $j\ne i$ such that
	\begin{equation}
		|\{u\in U_j: \{z,u\}\text{ is sparse}\}|>(1/2-2\varepsilon)|U_j|. \label{eq:510}
	\end{equation}

We now prove this claim.
Since $V_2'\in\binom{V_2}{k-1}\setminus K$, there exists some $P_i$ containing two distinct $a,b\in V_2'$.
By Lemma~\ref{lem:exp-structure}(i), the edge $\{x\}\cup V_2'$ meets $U_i$ in at most one vertex, so
$\{a,b\}\cap R\ne\varnothing$. Suppose, to the contrary, that no vertex of $\{a,b\}\cap R$ satisfies either \eqref{eq:59} or \eqref{eq:510}. For the case $\{a,b\}\subseteq R$, the number of their common dense neighbours is
at least $2\varepsilon|X|$. For the case $|\{a,b\}\cap R|=1$, the number of their common dense neighbours  is at least
$(1/2+\varepsilon)|X|-(4k\ell)^{k/2}\sqrt\rho\,|X|\ge2\varepsilon|X|$.
By the choice of $s_0$, we have $2\varepsilon|X|>k$. Hence in either case we can choose
$y_1\in X\setminus(\{x\}\cup V_2')$ that is a  dense neighbour of both $a$ and $b$.
For $j\ne i$, let $Z_j$ consist of those $u\in U_j\setminus(\{x\}\cup V_2')$ for which $\{y_1,u\}$, $\{a,u\}$, and $\{b,u\}$ are dense.
By Lemma~\ref{lem:exp-structure}(iii)--(v) and since \eqref{eq:510} fails, $$|Z_j|\ge \min\{4\varepsilon,1/2+2\varepsilon\}|U_j|-(4k\ell)^k\rho n-\xi^{1/3}n-k\ge3\varepsilon|U_j|.$$
Let $\mathcal S$ be the family of sparse pairs joining distinct $Z_j$'s, $j\ne i$. By Lemma~\ref{lem:exp-structure}(v), $$|\mathcal S|\le \frac12\sum_{j\ne i}\sum_{u\in Z_j}\xi^{1/3}n\le \frac12\xi^{1/3}n^2<\left(\frac{3\varepsilon n}{2(\ell-1)}\right)^2\le(\min_{j\ne i}|Z_j|)^2.$$
By Lemma~\ref{lem:selection}, there exist $z_j\in Z_j$ for each $j\ne i$ such that every pair from $\bigcup_{j\neq i}\{z_j\}$ is dense in $H$.
Then the set $C$ consisting $y_1,a,b$ and all $z_j$'s
has all pairs dense  in $H$ except possibly $\{a,b\}$, while
$(\{x\}\cup V_2')\cap C=\{a,b\}$. Lemma~\ref{obs:extension} gives a
copy of $H_{\ell+1}^k$  in $H$, a contradiction. Claim  follows.

Let $n^*=\min_i|U_i|$. Then $n^*\ge n/(2(\ell-1))$. Let $H_b$ be the
$k$-graph wiht edge  set $\{\{x\}\cup V_2': x\in X \mbox{ and } V_2'\in E(L_H(x)[V_2])\setminus K\}$. Then
\begin{equation}\label{eq:Hb-bad}
	|E(H_b)|=\Bad(\mathcal P).
\end{equation}
Let
$R_0\subseteq R$ consist of the vertices satisfying \eqref{eq:59} or
satisfying \eqref{eq:510}.
Let $$\mathcal B=\bigl\{\{u,v\}: \exists\, i\in[\ell-1]\text{ and }e\in E(H_b)
\text{ such that }\{u,v\}\in\binom{P_i\cap e}{2}
\text{ and }\{u,v\}\cap R_0\ne\varnothing\bigr\}.$$
By Claim, every edge of
$H_b$ contains a member in $\mathcal B$.

Since $t=|X|$, conditions \eqref{eq:59} and \eqref{eq:510} imply that there are at least $(1/2-2\varepsilon)t$ or $(1/2-2\varepsilon)n^*$ sparse neighbours for every vertex of $R_0$.
Thus the definition of sparse pairs gives
\[
\begin{aligned}
	\Miss(\mathcal P)
	&\ge\frac{|R_0|}{k-1}\min\Biggl\{
	(1/2-2\varepsilon)t
	\left[\left(\frac{n-t}{2\ell}\right)^{k-2}-\tau\right],
	(1/2-2\varepsilon)n^*
	\left[t\left(\frac{n-t}{2\ell}\right)^{k-3}-\tau\right]
	\Biggr\}\\
	&\ge |R_0|\frac{t}{4k^2}
	\left(\frac{n-t}{2\ell}\right)^{k-2}.
\end{aligned}
\]
Since $|E(H_b)|=\Bad(\mathcal P)>\Miss(\mathcal P)$, we have
\[
\sum_{\{u,v\}\in\mathcal B}d_{H_b}(u,v)
\ge |R_0|\frac{t}{4k^2}
\left(\frac{n-t}{2\ell}\right)^{k-2}.
\]
For each $v\in R_0$, let
$\mathcal B_v=\{\{v,w\}:\{v,w\}\in\mathcal B\}$. Since
$\mathcal B\subseteq\bigcup_{v\in R_0}\mathcal B_v$, there exists
$z\in R_0$, say $z\in P_1$, such that
\begin{equation}
	\sum_{\{z,w\}\in\mathcal B_z}d_{H_b}(z,w)
	\ge\frac{t}{4k^2}
	\left(\frac{n-t}{2\ell}\right)^{k-2}. \label{eq:511}
\end{equation}
It follows that
\begin{equation}
	d_{H_b}(z)\ge\frac{t}{4k^3}
	\left(\frac{n-t}{2\ell}\right)^{k-2}. \label{eq:512}
\end{equation}
Since $d_{H_b}(z,w)\le tn^{k-3}$ for each
$\{z,w\}\in\mathcal B_z$, the set
$W_z:=\{w:\{z,w\}\in\mathcal B_z\}$
satisfies $W_z\subseteq P_1$ and
$|W_z|\ge\frac{n}{4k^2(4\ell)^{k-2}}$.
Since $P_1\setminus U_1\subseteq R$ and $|R|\le4\varepsilon n$,
for sufficiently small $\varepsilon$, we have
\begin{equation}\label{eq:WzU1}
	|W_z\cap U_1|\ge\frac{n}{8k^2(4\ell)^{k-2}}.
\end{equation}

We claim that $z$ is a dense neighbour of at least $8\varepsilon|X|$ vertices of $X$. Otherwise,
$d_{H_b}(z)\le8\varepsilon t n^{k-2}+t\tau<16\varepsilon t n^{k-2}$,
contradicting \eqref{eq:512} when $\varepsilon$ is sufficiently small.

We next claim that, for some $j\ne1$, $z$ is a dense neighbour of fewer than
$8\varepsilon|U_j|$ vertices of $U_j$. Otherwise, choose
$w\in W_z\cap U_1$ and an edge $e\in E(H_b)$ containing $z,w$. Since $z$
is a dense neighbour of at least $8\varepsilon|X|$ vertices of $X$ and, by Lemma \ref{lem:exp-structure}(iv), $w$ has at most
$(4k\ell)^{k/2}\sqrt\rho\,|X|$ sparse neighbours in $X$, the number of  common dense neighbours
of $z$ and $w$
in $X$ is at least $(8\varepsilon-(4k\ell)^{k/2}\sqrt\rho)|X|>k$. Hence there is a
$y\in X\setminus e$ that is a dense neighbour of both $z$ and $w$. For $j\ne1$, let
$Z_j=\{u\in U_j\setminus e:\{z,u\},\{w,u\}\text{ and }\{y,u\}\text{ are dense in $H$}\}$.
By Lemma~\ref{lem:exp-structure}(iii) and (v), and  the fact that
$|U_j|\ge n^*\ge n/(2(\ell-1))$, we have
$|Z_j|\ge 8\varepsilon|U_j|-(4k\ell)^k\rho n-\xi^{1/3}n-k\ge 8\varepsilon|U_j|-4\varepsilon|U_j|=4\varepsilon|U_j|$,
where the second inequality follows from
$\xi^{1/3}\ll\rho$, $\sqrt{\rho}\ll\varepsilon$, and sufficiently large $n$. Applying Lemma~\ref{lem:selection} to the
family of sparse pairs joining distinct $U_j$'s gives
$u_j\in Z_j$, $j\ne1$, so that every pair among  these $u_j$'s is dense in
$H$. Thus
the set consisting of 	$y,z,w$ and those $u_j$'s
has all pairs dense in $H$ except possibly $\{z,w\}$, while $e$ covers $\{z,w\}$ without
meeting the other core vertices. Lemma~\ref{obs:extension} gives a
copy of $H_{\ell+1}^k$ in $H$, a contradiction.

Fix such an index $j$, move $z$ from $P_1$ to $P_j$, and denote the
resulting partition by $\mathcal P'$. Every edge whose
contribution to $\Phi$ decreases under this move contains $z$ and meets
$P_j$. By the choice of $j$,  the number of such edges is at most
\[
8\varepsilon|U_j|tn^{k-3}+|P_j\setminus U_j|tn^{k-3}+|U_j|\tau
<24\varepsilon tn^{k-2}.
\]
Here $P_j\setminus U_j\subseteq R$, $|R|\le4\varepsilon n$,
and $s_0\gg_{k,\ell}1/\varepsilon$.
Every edge counted on the left-hand side of \eqref{eq:511} contains another
vertex of $P_1$, and each such edge is counted at most $k-2$ times. After
discarding the edges meeting $P_j$, every remaining set $V_2'$
satisfies
$|\sigma_{\mathcal P'}(V_2')|=|\sigma_{\mathcal P}(V_2')|+1$. By \eqref{eq:511}, for $\varepsilon$ sufficiently small, 
\[
\Phi(\mathcal P')-\Phi(\mathcal P)
\ge\frac{1}{k-2}
\sum_{\{z,w\}\in\mathcal B_z}d_{H_b}(z,w)
-48\varepsilon tn^{k-2}>0.
\]
This contradicts the
maximality of $\mathcal P$. The proof is complete.
\end{proof}

\begin{proof}[Proof of Theorem~\ref{thm:small-expansion}]
The lower bound follows from the $k$-graph  $G(n,\ell,s,k)$, while the upper bound follows from Lemma~\ref{lem:augmented}.
\end{proof}

\section{Concluding remarks}

Theorems~\ref{thm:intro-hypergraph}, \ref{thm:intro-boundary}, \ref{thm:small-s}, and \ref{thm:small-expansion} exhibit two
different regimes for $s$. The extremal constructions are different for the case that $s\leq c_0n$ and the case that $n/k-\beta n \leq s\leq n/k$.
The following construction provides an additional candidate for $c_0n\leq s\leq n/k-\beta n$.

\begin{definition}\label{def:F-construction}
\rev{Let $\ell\ge k\ge3$ and $n\ge\ell+s$. Let $U$ and $V$ be disjoint sets
	with $|U\cup V|=n$ and $|U|=s$. Partition $U$ into $U_1,U_2$ and $V$ into
	$V_1,\ldots,V_{\ell-2}$, with the parts in each partition as equal in size
	as possible. Define $F(n,\ell,s,k)$ on $U\cup V$ by
	\[
	E(F(n,\ell,s,k))=
	\left\{e\in\binom{U\cup V}{k}:
	e\cap U\ne\varnothing,\ |e\cap U_i|\le1\ (i\in[2]),\
	|e\cap V_j|\le1\ (j\in[\ell-2])\right\}.
	\]}
	\end{definition}
	
	The $k$-graph $F(n,\ell,s,k)$ is $\ell$-partite, and hence
	$\cK_{\ell+1}^k$-free. Moreover, every edge meets $U$, and so
	$\nu(F(n,\ell,s,k))\le s$.
	A straightforward counting argument determines when
	$F(n,\ell,s,k)$ has at least as many edges as $H(n,\ell,s,k)$.
	Indeed,
	\begin{align*}
|E(F(n,\ell,s,k))|
&=s\binom{\ell-2}{k-1}
\left(\frac{n-s}{\ell-2}\right)^{k-1}
+\frac{s^2}{4}\binom{\ell-2}{k-2}
\left(\frac{n-s}{\ell-2}\right)^{k-2}
+O(n^{k-1}),\\
|E(H(n,\ell,s,k))|
&=s\binom{\ell-1}{k-1}
\left(\frac{n-s}{\ell-1}\right)^{k-1}
+O(n^{k-1}).
\end{align*}
After cancelling the common factor $s(n-s)^{k-2}$ from the two
leading terms, their comparison is equivalent to
$s/n\ge c_{\ell,k}$, where
\[
c_{\ell,k}=
\frac{4\bigl((\ell-2)^{k-1}-(\ell-k)(\ell-1)^{k-2}\bigr)}
{4\bigl((\ell-2)^{k-1}-(\ell-k)(\ell-1)^{k-2}\bigr)
+(k-1)(\ell-2)(\ell-1)^{k-2}}.
\]
Consequently, for every fixed $\eta>0$ and all sufficiently large $n$,
$|E(F(n,\ell,s,k))|\ge |E(H(n,\ell,s,k))|$
whenever $(c_{\ell,k}+\eta)n\le s\le\frac{n-k}{k}$.

\appendix

\section{Proofs of Lemma~\ref{lem:numerical} and Lemma~\ref{lem:exp-numerical}}\label{app:numerical-proofs}

\label{app:common-numerical}

\begin{proof}[Proof of Lemma~\ref{lem:numerical}]
For $q\ge r$, we have
\[
\frac1{r^r}\le \frac{\binom qr}{q^r}\le \frac1{r!},
\qquad
\frac{\binom qr}{q^r}-\frac{\binom{q-1}r}{(q-1)^r}\ge \frac{r-1}{2r^{r-1}q^2},
\]
where we interpret $\binom{q-1}{r}=0$ when $q=r$.

Indeed,
\[
\frac{\binom qr}{q^r}=\frac1{r!}\prod_{i=1}^{r-1}\left(1-\frac{i}{q}\right).
\]
Since each factor is increasing in $q$, the left-hand side is minimized at
$q=r$, where it equals $1/r^r$, while the upper bound $1/r!$ is immediate.

To prove the second inequality,  define
$f(u)=\frac1{r!}\prod_{i=1}^{r-1}\left(1-\frac{i}{u}\right)$. Suppose first that $q>r$.
By the mean value theorem, there exists
$\xi\in(q-1,q)$ such that $f(q)-f(q-1)=f'(\xi)$.
For every $u\in[q-1,q]$, we have $f(u)\ge\frac1{r^r}$
and $\frac{f'(u)}{f(u)}=\sum_{i=1}^{r-1}\frac{i}{u(u-i)}\ge \frac1{q^2}\sum_{i=1}^{r-1}i=\frac{r(r-1)}{2q^2}$.
Therefore,
\[
f(q)-f(q-1)\ge\frac1{r^r}\cdot\frac{r(r-1)}{2q^2}=\frac{r-1}{2r^{r-1}q^2},
\]
which proves the desired inequality when $q>r$.
The case $q=r$ follows directly.

Write $x=s/n$, $r=k-1$ and $q=\ell-1$. Then $q>r$, $r\ge 3$ and $c_0=\frac{1}{8r^{r-1}q^3}$. Since $s\ge1$, $\left\lfloor\frac{n-s}{q}\right\rfloor\ge\frac{n-qs}{q}$,
and hence $	t^*\ge\frac{\binom qr}{q^r}(1-qx)^rn^r$.
As $x<c_0$, we have $rqx<1/2$. Bernoulli's inequality gives $
t^*\ge\frac12\frac{\binom qr}{q^r}n^r\ge\frac{n^r}{2r^r}$.
Also $\binom{n-1}{k-2}\le n^{r-1}/(r-1)!$, and $k^2+1\le3r^2$,  so $(k^2+1)s\binom{n-1}{k-2}<\frac{n^r}{2r^r}$,
which proves \eqref{eq:21}.

Again by Bernoulli's inequality and the standard upper bound for a balanced
partite graph,
\[t^*-(k+2)s\binom{n-1}{k-2}-t_{k-1}(n-2s,q-1)\ge n^r\left[
\frac{\binom qr}{q^r}-\frac{\binom{q-1}r}{(q-1)^r}
-x\left(rq\frac{\binom qr}{q^r}+\frac{r+3}{(r-1)!}\right)
\right].
\]
Moreover, $rq\frac{\binom qr}{q^r}+\frac{r+3}{(r-1)!}\le \frac{4q}{(r-1)!}$.
Hence, by the preceding density-gap estimate, the expression in brackets
is positive provided that $x<\frac{(r-1)(r-1)!}{8r^{r-1}q^3}$.
Since $
x<c_0\le \frac{(r-1)(r-1)!}{8r^{r-1}q^3}$, inequality~\eqref{eq:22} follows.

Now \eqref{eq:23} follows immediately from \eqref{eq:22}, the monotonicity  of
$t_{k-1}(y,\ell-2)$ in $y$, and the inequality $2s+k\le (k+2)s$.
Finally, adding a single vertex creates at most $\binom{n-1}{k-2}$ new
$(k-1)$-edges. Iterating this observation yields \eqref{eq:24}.
\end{proof}

\label{app:expansion-numerical}

\begin{proof}[Proof of Lemma~\ref{lem:exp-numerical}]
Set $x=s/n$, $r=k-1$ and $q=\ell-1$. Since $x<c_0$, the preceding proof gives
$t^*\ge n^{k-1}/(2(k-1)^{k-1})$. Our choice of $\eta$ also ensures that
$\eta<1/(8(k-1)^{k-1})$. Moreover,
\[
(k^2+1)s\binom{n-1}{k-2}
<\frac{3}{8(k-1)^{k-1}}n^{k-1}.
\]
Combining these estimates yields \eqref{eq:25}.

To prove \eqref{eq:26}, we use the standard balanced-partite
estimate at the two relevant arguments. For sufficiently large $n$,
\begin{align*}
	t^*-t_{k-1}(n-s-\rho n,\ell-1)
	&=
	\frac{\binom{\ell-1}{k-1}}{(\ell-1)^{k-1}}
	\bigl[(n-s)^{k-1}-(n-s-\rho n)^{k-1}\bigr]
	+O(n^{k-2})\\
	&\ge
	\frac{\rho}{2^{k-1}(k-1)^{k-2}}n^{k-1}\\
	&>\eta n^{k-1}.
\end{align*}
Here $n-s-\rho n\ge n/2$ and
$\binom{\ell-1}{k-1}/(\ell-1)^{k-1}
\ge1/(k-1)^{k-1}$ were used.  \eqref{eq:26} follows.

It remains to prove \eqref{eq:27}. The difference between the
left-hand side and the right-hand side of \eqref{eq:27} is at least
\[
n^{k-1}\Biggl[
\frac{\binom{\ell-1}{k-1}}{(\ell-1)^{k-1}}
-\frac{\binom{\ell-2}{k-1}}{(\ell-2)^{k-1}}
-\eta-x\left(
(k-1)(\ell-1)
\frac{\binom{\ell-1}{k-1}}{(\ell-1)^{k-1}}
+\frac{2}{(k-2)!}
\right)
-\frac{2\rho+k/n}{(k-2)!}
\Biggr].
\]

By the estimates established above,
\[
\frac{\binom{\ell-1}{k-1}}{(\ell-1)^{k-1}}
-\frac{\binom{\ell-2}{k-1}}{(\ell-2)^{k-1}}
\ge
\frac{k-2}{2(k-1)^{k-2}(\ell-1)^2}.
\]
Moreover,
\[
\begin{aligned}
	(k-1)(\ell-1)
	\frac{\binom{\ell-1}{k-1}}{(\ell-1)^{k-1}}
	+\frac{2}{(k-2)!}
	\le
	\frac{\ell-1}{(k-2)!}+\frac{2}{(k-2)!}
	\le \frac{2(\ell-1)}{(k-2)!}.
\end{aligned}
\]
Since
$x<c_0=1/(8(k-1)^{k-2}(\ell-1)^3)$, we have
\[
\begin{aligned}
	x\left((k-1)(\ell-1)
	\frac{\binom{\ell-1}{k-1}}{(\ell-1)^{k-1}}
	+\frac{2}{(k-2)!}\right)
	<\frac{1}
	{4(k-1)^{k-2}(\ell-1)^2(k-2)!}
	\le
	\frac12\cdot
	\frac{k-2}
	{2(k-1)^{k-2}(\ell-1)^2}.
\end{aligned}
\]
The choice of $\rho$ and $\eta$ also gives
\[
\begin{aligned}
	\frac{2\rho}{(k-2)!}
	<
	\frac{k-2}
	{32(k-1)^{k-2}(\ell-1)^2(k-2)!}
	\le
	\frac1{16}\cdot
	\frac{k-2}
	{2(k-1)^{k-2}(\ell-1)^2},
\end{aligned}
\]
and
\[
\eta
<\rho
<
\frac1{32}\cdot
\frac{k-2}
{2(k-1)^{k-2}(\ell-1)^2}.
\]
Taking $n$ sufficiently large, we may  assume that
\[
\frac{k}{n(k-2)!}
<
\frac18\cdot
\frac{k-2}
{2(k-1)^{k-2}(\ell-1)^2}.
\]
Consequently, the expression in square brackets above is
greater than
\[
\begin{aligned}
	\left(
	1-\frac12-\frac1{16}-\frac1{32}-\frac18
	\right)
	\frac{k-2}
	{2(k-1)^{k-2}(\ell-1)^2}
	=
	\frac9{32}
	\frac{k-2}
	{2(k-1)^{k-2}(\ell-1)^2}
	>0.
\end{aligned}
\]
Thus the left-hand side of \eqref{eq:27} minus its
right-hand side is positive, proving \eqref{eq:27}.
\end{proof}

\end{document}